\renewcommand{\leq}{\leqslant}
\renewcommand{\geq}{\geqslant}
\numberwithin{equation}{section}
\def\stacksum#1#2{{\stackrel{{\scriptstyle #1}}
{{\scriptstyle #2}}}}
\newcommand{\Cc}{\mathbf{C}}
\newcommand{\Nn}{\mathbf{N}}
\newcommand{\Aa}{\mathbf{A}}
\newcommand{\Zz}{\mathbf{Z}}
\newcommand{\Pp}{\mathbf{P}} 
\newcommand{\Rr}{\mathbf{R}}
\newcommand{\Gg}{\mathbf{G}}
\newcommand{\Hh}{\mathbf{H}}
\newcommand{\Qq}{\mathbf{Q}}
\newcommand{\Fp}{{\mathbf{F}_p}}
\newcommand{\Ff}{\mathbf{F}}
\newcommand{\Tt}{\mathbf{T}}
\newcommand{\mods}[1]{\,(\mathrm{mod}\,{#1})}
\newcommand{\wwd}{\mathcal{C}}
\newcommand{\haut}{\mathbf{G}}
\newcommand{\hautk}[2]{\haut_{{#1},{#2}}}
\newcommand{\hauti}[3]{\mathbf{G}^{{#1}}_{{#2},{#3}}}
\newcommand{\tsum}{\mathcal{S}}
\newcommand{\skl}[1]{\sheaf{K}^{({#1})}}
\newcommand{\hk}[1]{\sheaf{K}\ell_{{#1}}}
\newcommand{\mutw}[3]{\mu_{{#3},{#2}}}
\newcommand{\frtr}[3]{(\Tr{{#1}})({#2},{#3})}
\DeclareMathOperator{\hypk}{Kl}
\newcommand{\HYPK}{\mathcal{K}\ell}
\newcommand{\tnorm}[2]{\|{#1}\|_{\mathrm{tr},{#2}}}
\newcommand{\ra}{\rightarrow}
\newcommand{\lra}{\longrightarrow}
\newcommand{\injecte}{\hookrightarrow}
\newcommand{\fleche}[1]{\stackrel{#1}{\lra}}
\DeclareMathOperator{\rank}{rank}
\DeclareMathOperator{\frob}{\mathrm{Fr}}
\DeclareMathOperator{\Gal}{Gal}
\DeclareMathOperator{\Ind}{Ind}
\DeclareMathOperator{\Tr}{tr}
\DeclareMathOperator{\swan}{Swan}
\DeclareMathOperator{\ft}{FT}
\DeclareMathOperator{\cond}{cond}
\DeclareMathOperator{\dual}{D}
\newcommand{\eps}{\varepsilon}
\renewcommand{\rho}{\varrho}
\DeclareMathOperator{\SL}{SL}
\DeclareMathOperator{\GL}{GL}
\DeclareMathOperator{\PGL}{PGL}
\DeclareMathOperator{\rmT}{T}
\DeclareMathOperator{\rmN}{N}
\DeclareMathOperator{\rmU}{U}
\newcommand{\demi}{{\textstyle{\frac{1}{2}}}}
\newcommand{\sheaf}[1]{\mathcal{{#1}}}
\DeclareMathSymbol{\gena}{\mathord}{letters}{"3C}
\DeclareMathSymbol{\genb}{\mathord}{letters}{"3E}
\def\dblsum{\mathop{\sum \sum}\limits}
\def\multsum{\mathop{\sum\cdots \sum}\limits}
\def\sums{\mathop{\sum \Bigl.^{*}}\limits}
\def\dblsums{\mathop{\sum\sum \Bigl.^{*}}\limits}
\theoremstyle{plain}
\newtheorem{theorem}{Theorem}[section]
\newtheorem{lemma}[theorem]{Lemma}
\newtheorem{corollary}[theorem]{Corollary}
\newtheorem{proposition}[theorem]{Proposition}
\newtheorem*{proposition*}{Proposition}
\theoremstyle{remark}
\newtheorem*{rem}{Remark}
\newtheorem{remark}[theorem]{Remark}
\theoremstyle{definition}
\newtheorem{definition}[theorem]{Definition}
\newcommand{\abs}[1]{\lvert#1\rvert}
\newcommand{\mcL}{\mathcal{L}}
\newcommand{\mcS}{\mathcal{S}}
\newcommand{\mcF}{\mathcal{F}}
\newcommand{\mcG}{\mathcal{G}}
\newcommand{\mcB}{\mathcal{B}}
\newcommand{\mcE}{\mathcal{E}}
\newcommand{\mfa}{\mathfrak{a}}
\renewcommand{\geq}{\geqslant}
\renewcommand{\leq}{\leqslant}
\renewcommand{\Re}{\mathfrak{Re}\,}
\renewcommand{\Im}{\mathfrak{Im}\,}
\newcommand{\refs}{\eqref}
\newcommand{\bash}{\backslash}
\newcommand{\ov}[1]{\overline{#1}}
\newcommand{\peter}[1]{\langle{#1}\rangle}
\newcommand\sumsum{\mathop{\sum\sum}\limits}
\begin{document}

\title{Algebraic twists of modular forms and Hecke orbits}
 
\author{\'Etienne Fouvry}
\address{Universit\'e Paris Sud, Laboratoire de Math\'ematique\\
  Campus d'Orsay\\ 91405 Orsay Cedex\\France}
\email{etienne.fouvry@math.u-psud.fr} \author{Emmanuel Kowalski}
\address{ETH Z\"urich -- D-MATH\\
  R\"amistrasse 101\\
  CH-8092 Z\"urich\\
  Switzerland} \email{kowalski@math.ethz.ch} \author{Philippe Michel}
\address{EPFL/SB/IMB/TAN, Station 8, CH-1015 Lausanne, Switzerland }
\email{philippe.michel@epfl.ch}

\date{\today,\ \thistime} 

\thanks{Ph. M. was partially supported by
  the SNF (grant 200021-137488) and the ERC (Advanced Research Grant
  228304). \'E. F. thanks ETH Z\"urich, EPF Lausanne and the Institut
  Universitaire de France for financial support.  }

\subjclass[2010]{11F11,11F32,11F37,11T23,11L05} \keywords{Modular
  forms, Fourier coefficients, Hecke eigenvalues, Hecke orbits,
  horocycles, $\ell$-adic Fourier transform, Riemann Hypothesis over
  finite fields}

\begin{abstract}
  We consider the question of the correlation of Fourier coefficients
  of modular forms with functions of algebraic origin. We establish
  the absence of correlation in considerable generality (with a power
  saving of Burgess type) and a corresponding equidistribution
  property for twisted Hecke orbits. This is done by exploiting the
  amplification method and the Riemann Hypothesis over finite fields,
  relying in particular on the $\ell$-adic Fourier transform
  introduced by Deligne and studied by Katz and Laumon.
\end{abstract}

\maketitle
\setcounter{tocdepth}{1}
\tableofcontents

\section{Introduction and statement of results}

This paper concerns a certain type of sums involving Fourier 
coefficients of modular forms, which we call ``algebraic
twists''. Their study can be naturally motivated either from a point
of view coming from analytic number theory, or from geometric
considerations involving Hecke orbits on modular curves. We will
present them using the first approach, and discuss the geometric
application in Section~\ref{ssec-orbits}.
\par
We will be considering either holomorphic cusp forms or Maass
forms. Precisely, the statement \emph{$f$ is a cusp form} will mean,
unless otherwise indicated, that $f$ is either (1) a non-zero
holomorphic cusp form of some even weight $k\geq 2$ (sometimes denoted
$k_f$) and some level $N\geq 1$; or (2) a non-zero Maass cusp form of
weight $0$, level $N$ and Laplace eigenvalue written $1/4+t_f^2$. In
both cases, we assume $f$ has trivial Nebentypus for simplicity.
\par
The statement that a cusp form $f$ of level $N$ is \emph{a Hecke
  eigenform} will also, unless otherwise indicated, mean that $f$ is
an eigenfunction of the Hecke operators $T_n$ with $(n,N)=1$.

\subsection{Algebraic twists of modular forms}

Let $f:\Hh\ra\Cc$ be a cusp form (as discussed above). We have
$f(z+1)=f(z)$, so $f$ that admits a Fourier expansion at infinity, and
we denote the $n$-th Fourier coefficient of $f$ by
$\rho_f(n)$. Explicitly, if $f$ is holomorphic of weight $k$, the
Fourier expansion takes the form
$$
f(z)=\sum_{n\geq 1}{n^{(k-1)/2}\rho_f(n)e(nz)},
$$
and if $f$ is a Maass form, the Fourier expansion is normalized as
in~(\ref{eq-fourier-expansion}) below. It follows from Rankin-Selberg
theory that the Fourier coefficients $\rho_f(n)$ are bounded on
average, namely
\begin{equation}
\label{RankinSelberg}
\sum_{n\leq x}|\rho_f(n)|^2=c_f x+O(x^{3/5})
\end{equation}
for some $c_f>0$. For individual terms, we have
\begin{equation}
\label{ksbound}
\rho_f(n)\ll_{\eps,f}n^{7/64+\eps}
\end{equation}
for any $\eps>0$ by the work of Kim and Sarnak~\cite{kim-sarnak}, and
moreover, if $f$ is holomorphic, it follows from Deligne's proof of
the Ramanujan-Petersson conjecture that the $\rho_f(n)$ are almost
bounded, so that
$$
\rho_f(n)\ll_{\eps,f} n^\eps
$$
for any $\eps>0$.

On the other hand, it is also well-known that the Fourier coefficients
oscillate quite substantially, as the estimate
\begin{equation}\label{oscillate}
\sum_{n\leq x}\rho_f(n)e(\alpha n)\ll x^{1/2}(\log 2x)
\end{equation}
valid for $x\geq 1$ and $\alpha\in\Rr$, with an implied constant
depending on $f$ only, shows (see, e.g.,~\cite[Th. 5.3]{topics}
and~\cite[Th. 8.1]{IwI}).
\par
One may ask, more generally, whether the sequence $(\rho_f(n))_{n\geq
  1}$ \emph{correlates} with another bounded (or essentially bounded)
sequence $K(n)$. This may be defined formally as follows: $(K(n))$
does \emph{not} correlate with the Fourier coefficients of $f$ if we
have
$$
\sum_{n\leq x}\rho_f(n)K(n)\ll x(\log x)^{-A}
$$
for all $A\geq 1$, the implied constant depending on $A$.\footnote{\
  It is not enough to ask that the sum be $o(x)$ because this is then
  true for $K(n)$ equal to the sign of $\rho_f(n)$, see for
  instance~\cite{elliott}.} There are many known examples, of which
we list only a few particularly interesting ones:
\par
\begin{itemize}
\item For $K(n)=\mu(n)$, the M\"obius function, the non-correlation is
  an incarnation of the Prime Number Theorem, and is a consequence of
  the non-vanishing of the Hecke $L$-function $L(f,s)$ for $\Re s=1$
  when $f$ is primitive; more generally, for $K(n)=\mu(n)e(n\alpha)$
  where $\alpha\in\Rr/\Zz$, non-correlation has been obtained recently
  by Fouvry and Ganguly~\cite{fouvry-ganguly};
\item When $K(n)=\overline{\rho_g(n)}$ for $g$ any modular form which
  is orthogonal to $f$, non-correlation is provided by Rankin-Selberg
  theory;
\item For $K(n)=\rho_g(n+h)$ with $h\not=0$ and $g$ any modular form,
  whether it is orthogonal to $f$ or not, non-correlation follows from
  the study of \emph{shifted-convolution sums}, and has crucial
  importance in many studies of automorphic $L$-functions.
\end{itemize}
\par
In this paper we are interested in the absence of correlation of the
coefficients $(\rho_f(n))_n$ against sequences $(K(n))_{n\geq 1}$
where
$$
K\,:\, \Zz/p\Zz\ra \Cc
$$
is a function defined modulo $p$, for some prime $p$, which is
extended to all of $\Zz$ by periodicity. We will then consider sums of
the shape
$$
\sum_{n\leq p}\rho_f(n)K(n),
$$
or rather smoothed versions of these, which we denote
$$
\tsum(f,K;p)=\tsum_V(f,K;p)=\sum_{n\geq 1}{ \rho_f(n)K(n)V(n/p)},
$$
for $V$ a smooth compactly supported function on $]0,+\infty[$ (often
$V$ will be omitted from the notation).
\par
By \refs{RankinSelberg}, the trivial bound for these sums is
$$
\tsum(f,K;p)\ll p\Bigl(\frac{1}p\sum_{n\leq p}{|K(n)|^2}\Bigr)^{1/2}\ll
p\max_{1\leq n\leq p}|K(n)|,
$$
where the implied constant depends on $f$ and $V$, and our aim will be to
improve this bound; we will prove estimates of the shape
\begin{equation}
\label{oscillatep}
\tsum(f,K;p)\ll p^{1-\delta}
\end{equation}

for some absolute $\delta>0$, where the implied constant depends only
on $f$, $V$ and easily controlled invariants of $K$, such as
$$
\|K\|_2=\Bigl(\frac{1}p\sum_{n\leq p}{|K(n)|^2}\Bigr)^{1/2} \text{ or
}\|K\|_\infty=\max |K(n)|.
$$
\par
A first (slightly degenerate) example is a (normalized) \emph{Dirac}
function located at some $u\in\Fp$, i.e., $K(n)=p^{1/2}\delta_{n\equiv
  u\mods{p}}$. Here $\|K\|_\infty=p^{1/2}$ is large, but $\|K\|_2=1$
and
\begin{equation}
\label{diracbound}
\tsum(f,K;p)=p^{1/2}\sum_{n\equiv u\mods{p}}\rho_f(n)V(n/p)\ll p^{1-\delta}
\end{equation}
for any $\delta<1-7/64$ by \refs{ksbound}.
\par
Another non-trivial choice (somewhat simpler than the previous one) is
an additive character modulo $p$ given by $K(n)=e(an/p)$ for some
fixed $a\in \Zz$. In that case, $|K(n)|\leq 1$ and the
bound~\refs{oscillate} gives \refs{oscillatep} for any $\delta<1/2$,
with an implied constant depending only on $f$ and $V$.
\par
A third interesting example is given by $K(n)=\chi(n)$,
where $\chi$ is a non-trivial Dirichlet character modulo $p$ (extended
by $0$ at $p$). In that case, the bound \refs{oscillatep}, with an
implied constant depending only on $f$ and $V$, is essentially
equivalent to a \emph{subconvex} bound for the twisted $L$-function
$L(f\otimes\chi,s)$ in the level aspect, i.e., to a bound
$$
L(f\otimes\chi,s)\ll_{s,f} p^{1/2-\delta'},
$$
for some $\delta'>0$ and any fixed $s$ on the critical line. Such an
estimate was obtained for the first time by Duke-Friedlander-Iwaniec
in~\cite{dfi} for any $\delta'<1/22$. This bound was subsequently
improved to any $\delta'<1/8$ (a Burgess type exponent) by Bykovski
and Blomer-Harcos\footnote{\ We are very grateful to G. Harcos for
  pointing out the relevance of these two papers for the present one.}
\cite{byk,blomer-harcos}, and to $\delta'<1/6$ (a Weyl type exponent)
when $\chi$ is quadratic by Conrey-Iwaniec \cite{CI}.
\par
There are many other functions which occur naturally. We highlight two
types here. First, given rational functions $\phi_1,\phi_2$, say
$$
\phi_i(X)=\frac{R_i(X)}{S_i(X)}\in \Qq(X),\ i=1,2
$$
with $R_i,\ S_i\in \Zz[X]$ coprime (in $\Qq[X]$), and given a
non-trivial Dirichlet character $\chi\mods{p}$, one can form
\begin{equation}\label{eq-weight-mixed}
K(n)=\begin{cases}
  e\Bigl(\frac{\phi_1(n)}{p}\Bigr)\chi(\phi_2(n)),&\text{ if }
  p\nmid S_1(n)S_2(n),\\
  0,&\text{otherwise},
\end{cases}
\end{equation}
where inverses are computed modulo $p$ and with the usual convention
$\chi(0)=0$.  We will show that~(\ref{oscillatep}) holds
for such functions with an absolute exponent of Burgess type (see
Corollary~\ref{cor-main} below). The proof depends ultimately on the
Riemann Hypothesis over finite fields, which is applied in order to
estimate exponential sums in $3$ variables with square-root
cancellation, using Deligne's results~\cite{weilii}.
\par
Second, for $m\geq 1$ and $a\in\Ff_p^\times$ let
$$
\hypk_m(a;p)=\frac{1}{p^{\frac{m-1}2}}\multsum_\stacksum{x_1\cdots
  x_m=a}{x_i\in\Ff_p} e\Bigl(\frac{x_1+\cdots+x_m}p\Bigr)
$$ 
be the normalized hyper-Kloosterman sum in $m-1$ variables. Recall
that by the work of Deligne~\cite[Sommes Trig., (7.1.3)]{deligne} we
have
$$
|\hypk_m(a;p)|\leq m,
$$
and sums involving Kloosterman sums or hyper-Kloosterman sums are
frequent visitors of analytic number theorists. Consider now, for
$\phi=\frac{R(X)}{S(X)}\in\Qq[X]$ a non-constant rational function
with $R,S\in\Zz[X]$, $S\not=0$ and $\Phi(U,V)\in\Cc[U,V]$ a polynomial
in two variables, the function
\begin{equation}\label{eq-hypk-weight}
K(n)=\begin{cases} \Phi\big(\hypk_m(\phi(n);p),
  \ov{\hypk_m(\phi(n);p)}\big),&\text{ if }
  p\nmid S(n)\\
  0&\text{otherwise}.
\end{cases}
\end{equation}
\par
We will also show a bound of the type~(\ref{oscillatep}) for these
rather wild functions.
\par
The precise common feature of these examples is that they arise as
linear combination of \emph{Frobenius trace functions} of certain
$\ell$-adic sheaves over the affine line $\Aa^1_{\Ff_p}$ (for some
prime $\ell\not=p$). We therefore call these functions \emph{trace
  functions}, and we will give the precise definition below. To state
our main result, it is enough for the moment to know that we can
measure the complexity of a trace function modulo $p$ with a numerical
invariant called its \emph{conductor} $\cond(K)$. Our result is,
roughly, that when $\cond(K)$ remains bounded, $K(n)$ does not
correlate with Fourier coefficients of modular forms.
\par
As a last step before stating our main result, we quantify the
properties of the test function $V$ that we handle. Given $P>0$ and
$Q\geq 1$ real numbers, we define:

\begin{definition}[Condition $(V(C,P,Q))$]\label{Condition(V)}
  Let $P>0$ and $Q\geq 1$ be real numbers and let $C=(C_\nu)_{\nu\geq 0}$ be a sequence of non-negative real numbers.  A smooth compactly
  supported function $V$ on $[0,+\infty[$ satisfies {\rm Condition}
  $(V(C,P,Q))$ if
\begin{enumerate}
\item \label{support} The support of $V$ is contained in the dyadic
  interval $[P,2P]$; 
\item \label{derivative} For all $x>0$ and all integers $\nu\geq 0$ we
  have the inequality
$$
\bigl|\,x^\nu V^{(\nu)}(x)\,\bigr|\leq C_\nu Q ^{\nu}.
$$
\end{enumerate}
\par
In particular, $|V(x)|\leq C_0$ for all $x$.
\end{definition}
\begin{rem}
A smooth dyadic sum corresponds to cases where $P=1/2$ and $Q$ is
absolutely bounded.  This is the most important situation to consider,
in a first reading at least.  In other situations, we have in mind
that $PQ$ is also absolutely bounded.

As a referee pointed out, the sequence $C=(C_\nu)_{\nu\geq 0}$ should
grow sufficiently fast in order for the set of functions satisfying
$(V(C,P,Q))$ be non-trivial: for instance if $(C_\nu)_{\nu\geq 0}$,
any such function $V$ would have to be analytic hence identically zero
since compactly supported.
\end{rem}

Our main result is:

\begin{theorem}\label{th-traceweight}
  Let $f$ be a Hecke eigenform, $p$ be a prime number and $V$ a
  function satisfying $(V(C,P,Q))$. Let $K$ be an \emph{isotypic} trace
  function of conductor $\cond(K)$, as defined in
  Section~\ref{intro-l-adic}.
\par
There exists $s\geq 1$ absolute such that we have 
$$
\tsum_V(f,K;p)\ll \cond(K)^sp^{1-\delta}(PQ)^{1/2}(P+Q)^{1/2}
$$
for any $\delta<1/8$, where the implied constant depends only on $C$, $f$
and $\delta$.
\end{theorem}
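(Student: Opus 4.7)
The plan is to use the amplification method combined with the Petersson/Kuznetsov trace formula, reducing the estimate to a bound on correlation sums of trace functions of $\ell$-adic sheaves, which is then achieved via Deligne's form of the Riemann Hypothesis. The overall skeleton follows Duke-Friedlander-Iwaniec and the subconvexity refinements of Bykovski and Blomer-Harcos (for Dirichlet character twists), generalized here to arbitrary isotypic trace weights. The strategic role of the isotypy hypothesis on $K$ is to guarantee a uniform square-root cancellation bound for the correlation sums appearing at the final step, with only a polynomial loss in $\cond(K)$.

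First I would introduce an amplifier. For a parameter $L \geq 1$ to be optimized near $p^{1/4}$, pick coefficients $\alpha_\ell$ supported on primes $\ell \in [L, 2L]$ coprime to $Np$ with $B(f) := \sum_\ell \alpha_\ell \lambda_f(\ell)$ satisfying $|B(f)| \gg_f L^{1/2-\eps}$. Using the Hecke relation $\lambda_f(\ell)\rho_f(n) = \rho_f(\ell n) + (\text{lower-order})$, I rewrite $B(f)\cdot\tsum_V(f,K;p)$ as a linear combination, indexed by $\ell$, of sums $\sum_n \rho_f(n)K(n/\ell)V(n/(p\ell))$. Squaring, replacing $|B(f)|^2$ by a positivity-dominated spectral average over an orthonormal basis (cusp forms plus Eisenstein continuous spectrum) of level $Np$, and applying the Petersson and Kuznetsov trace formulas yields a weighted sum of Kloosterman sums
$$
\sum_{\ell_1,\ell_2}\alpha_{\ell_1}\ov{\alpha_{\ell_2}}\sum_{c\equiv 0\,(Np)}\frac{1}{c}\sum_{m,n} K(m)\ov{K(n)}\,W(m,n)\,S(\ell_1 m,\ell_2 n;c)\,J\!\left(\frac{4\pi\sqrt{\ell_1\ell_2 mn}}{c}\right),
$$
plus a benign diagonal of size $O(L)$, where $W$ encodes the $V$-smoothing.

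Opening the Kloosterman sum and applying Poisson summation modulo $p$ in the $(m,n)$ variables converts $K$ and $\ov K$ into their $\ell$-adic Fourier transforms $\widehat K$; by Deligne-Laumon-Katz (reviewed in the paper), $\widehat K$ is again a trace weight with conductor polynomially bounded in $\cond(K)$. After arithmetic manipulations of the Kloosterman sum, each modulus $c$ with $p\,\|\,c$ produces a correlation sum
$$
\mcC(K;\gamma):=\sum_{x\in\Fp}\widehat K(x)\,\ov{\widehat K(\gamma\cdot x)},\qquad \gamma\in\PGL_2(\Fp),
$$
for an explicit $\gamma=\gamma(\ell_1,\ell_2,h_1,h_2,c/p)$ built from the Poisson-dual frequencies $h_i$ and the amplifier indices. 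The crucial input is a uniform bound $|\mcC(K;\gamma)|\ll \cond(K)^{O(1)}p^{1/2}$ for $\gamma$ outside an "exceptional" subvariety $E(K)\subset \PGL_2$ of dimension at most one; this follows from Deligne's Weil II applied to the tensor sheaf $\mcF\otimes\gamma^*\mcF^\vee$ underlying $K$, once one checks, using isotypy, that $\gamma\notin E(K)$ forces this sheaf to have no trivial geometric component.

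The main obstacle is precisely this correlation-sum estimate: one must classify the "geometric symmetries" of an irreducible $\ell$-adic sheaf on $\Aa^1_{\Fp}$ under the $\PGL_2$-action by fractional linear transformations, show that the set of $\gamma$ preserving the isomorphism class of $\mcF$ up to tame twist is small, and verify that these exceptional $\gamma$ contribute affordably to the Poisson expansion. Granted this bound, collecting all contributions and using $|B(f)|\gg L^{1/2-\eps}$ produces an estimate of the shape
$$
L\cdot|\tsum_V(f,K;p)|^2 \ll \cond(K)^{O(1)}\,\bigl(Lp+L^3 p^{1/2}\bigr)\,(PQ)(P+Q),
$$
and balancing the two terms at $L\sim p^{1/4}$ yields the Burgess-type exponent $\delta<1/8$ claimed in the theorem.
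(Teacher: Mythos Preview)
Your outline is essentially the paper's own strategy: amplification at level $Np$, Petersson/Kuznetsov, Poisson summation to correlation sums $\wwd(K;\gamma)$, and Deligne's theorem for the generic square-root bound, balanced at $L\asymp p^{1/4}$.

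One point is understated in your sketch and deserves emphasis, since it is where most of the technical work lies. You describe the exceptional set $E(K)$ as ``a subvariety of dimension at most one'' and then say the exceptional $\gamma$ ``contribute affordably''. The paper's key structural insight is sharper: the set $\haut_{\sheaf{F}}=\{\gamma:\gamma^*\sheaf{G}\simeq\sheaf{G}\text{ geometrically}\}$ is a \emph{subgroup} of $\PGL_2(\bar\Ff_p)$, and the classification of finite subgroups of $\PGL_2$ then forces it (when $p\nmid|\haut_{\sheaf{F}}(\Fp)|$) into a bounded union of tori and normalizers, or (when $p\mid|\haut_{\sheaf{F}}(\Fp)|$) into a unipotent radical. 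This is what makes the weight ``$(p,M)$-good'' in the paper's sense. The counting of resonating matrices $\gamma(c,d,e,n_1,n_2)$ whose reduction mod $p$ lands in these specific subgroups is then a genuine Diophantine step: because the integer entries are $\ll p^{1/4}$, congruences mod $p$ lift to equalities over $\Zz$, and one is reduced to counting representations by explicit binary quadratic forms (Estermann-type bounds). Without the group structure, ``dimension $\leq 1$'' alone would not obviously yield the required sparsity of integral lifts.
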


\begin{remark} 
  The Burgess type subconvex bounds for $L(f\otimes\chi,1/2)$ of
  Bykovski and Blomer-Harcos mentioned above can easily be retrieved
  from the special case $K(n)=\chi(n)$.
\end{remark}

As a consequence of this and~(\ref{RankinSelberg}), one has the
following non-trivial estimate for sums over intervals, whose proof is
given in Section~\ref{sec-intervals}:

\begin{corollary}\label{cor-intervals}
  Under the same assumptions as above, for any interval $I\subset
  [1,p]$, we have
\begin{equation}\label{eq-sharp}
\sum_{n\in I}\rho_f(n)K(n)\ll \cond(K)^sp^{1-\delta/2}
\end{equation}
for any $\delta<1/8$, where the implied constant depends only on $f$ and
$\delta$.
\end{corollary}

This result applies almost directly to the
functions~(\ref{eq-weight-mixed}) and~(\ref{eq-hypk-weight}) and to a
wide range of algebraic exponential sums. We refer to Section
\ref{sec-app} for these and for more elaborate applications.
\par
An important point is that estimates like~(\ref{oscillatep}) are
obviously linear with respect to $K$, but the notion of an isotypic
function is not. This justifies the following definition:

\begin{definition}[Trace norms]
  Let $p$ be a prime number, and let $K\,:\, \Fp\lra \Cc$ be any
  function defined modulo $p$. Let $s\geq 1$ be an integer. The
  \emph{$s$-trace norm} of $K$ is
$$
\tnorm{K}{s}=\inf\Bigl\{
\sum_{i}{|\lambda_i|\cond(K_i)^s}+\sum_{j}{|\mu_j|}+
\sum_{k}{|\eta_k|} \Bigr\}
$$
where the infimum runs over all decompositions of $K$ as a finite
linear combination
\begin{equation}\label{eq-short-decomp}
K(x)=\sum_{i}{\lambda_i K_i(x)}+\sum_{j}{\mu_j p^{1/2}\delta_{a_j}(x)}
+\sum_{k}{\eta_k e\Bigl(\frac{b_kx}{p}\Bigr)},
\end{equation}
where $\lambda_i$, $\mu_j$, $\eta_k\in\mathbb{C}$, $a_j$, $b_k\in\Fp$,
and $K_i$ is an isotypic trace function.
\end{definition}

The decomposition of a function in Dirac functions shows that these
norms are well-defined.

We then have:

\begin{corollary}[Trace norm estimate]\label{cor-trace-norm}
  There exists an absolute constant $s\geq 1$ with the following
  property: for any cusp form $f$, any prime $p$, any function $K$
  modulo $p$, for any function $V$ satisfying $(V(C,P,Q))$, we have
$$
\tsum_V(f,K;p)\ll \tnorm{K}{s}p^{1-\delta}(PQ)^{1/2}(P+Q)^{1/2},
$$
for any $\delta<1/8$, where the implied constant depends only on
$(C,f,\delta)$.
\end{corollary}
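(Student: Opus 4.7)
The plan is to deduce Corollary \ref{cor-trace-norm} from Theorem \ref{th-traceweight} by a direct linearity argument, applying the trivial/classical bounds on the two non-isotypic pieces singled out in the definition of the trace norm. Fix any decomposition of $K$ as in \refs{eq-short-decomp},
$$
K(x)=\sum_i \lambda_i K_i(x)+\sum_j \mu_j p^{1/2}\delta_{a_j}(x)+\sum_k \eta_k e\Bigl(\frac{b_k x}{p}\Bigr),
$$
and split $\tsum_V(f,K;p)$ by linearity into three contributions. The isotypic contributions are handled directly by Theorem \ref{th-traceweight}: each gives
$$
|\lambda_i\tsum_V(f,K_i;p)|\ll |\lambda_i|\cond(K_i)^s p^{1-\delta}(PQ)^{1/2}(P+Q)^{1/2},
$$
and summing in $i$ produces the term $\sum_i |\lambda_i|\cond(K_i)^s$ in the trace norm.

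For the Dirac contributions, the sum
$$
\tsum_V(f,p^{1/2}\delta_{a_j};p)=p^{1/2}\sum_{n\equiv a_j\mods{p}}\rho_f(n)V(n/p)
$$
is supported in $[Pp,2Pp]$ and therefore involves $O(P)$ terms. Using the Kim-Sarnak bound \refs{ksbound} for each $\rho_f(n)$ gives a bound of order $p^{1/2}\cdot P\cdot(Pp)^{7/64+\eps}$, which comfortably fits inside $p^{1-\delta}(PQ)^{1/2}(P+Q)^{1/2}$ for any $\delta<1/8$, since $1/2+7/64<7/8$ and the $P,Q$ dependence on the right is at least $P^{1/2}Q^{1/2}(P+Q)^{1/2}\geq P$.

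For the additive character contributions, write
$$
\tsum_V(f,e(b_k\cdot/p);p)=\sum_{n\geq 1}\rho_f(n)e(b_kn/p)V(n/p)
$$
and apply partial summation to the bound \refs{oscillate}, using the derivative estimate $|V'(x)|\leq Q/x$ from Condition $(V(P,Q))$. This yields a bound of order $Q\,P^{1/2}p^{1/2}\log(Pp)$, which is again absorbed by $p^{1-\delta}(PQ)^{1/2}(P+Q)^{1/2}$, since the right-hand side dominates $P^{1/2}Q\cdot p^{1-\delta}$ and $\delta<1/8<1/2$.

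Collecting the three contributions gives
$$
|\tsum_V(f,K;p)|\ll \Bigl(\sum_i|\lambda_i|\cond(K_i)^s+\sum_j|\mu_j|+\sum_k|\eta_k|\Bigr)\,p^{1-\delta}(PQ)^{1/2}(P+Q)^{1/2},
$$
and since the decomposition \refs{eq-short-decomp} was arbitrary, taking the infimum over all such decompositions replaces the bracket by $\tnorm{K}{s}$. There is no real obstacle: the only point that requires any attention is checking that the auxiliary Dirac and additive-character bounds fit the precise shape $p^{1-\delta}(PQ)^{1/2}(P+Q)^{1/2}$, which is straightforward once one observes that these two cases are quantitatively much stronger (power saving of size $1/2-7/64$ versus the target $1/8$) than the isotypic bound coming from Theorem \ref{th-traceweight}.
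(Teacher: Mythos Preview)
Your proof is correct and follows exactly the same approach as the paper: decompose $K$ via \refs{eq-short-decomp}, apply Theorem~\ref{th-traceweight} to the isotypic pieces, \refs{ksbound} to the Dirac pieces, and \refs{oscillate} to the additive characters, then take the infimum over decompositions. You simply supply more detail on why the latter two bounds fit the target shape, but the skeleton is identical.
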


\begin{proof}
  Indeed, for a decomposition~(\ref{eq-short-decomp}), we can apply
  Theorem~\ref{th-traceweight} for the isotypic trace functions $K_i$,
  with the value of $s$ in that theorem, while we
  use~(\ref{oscillate}) for the components $\eta_ke(b_kx/p)$,
  and~(\ref{ksbound}) for the delta functions.
\end{proof}

\begin{remark}
  It is important to remark that this depends on~(\ref{oscillate}),
  and thus this corollary does not hold for Eisenstein series. For the
  latter, one can define analogues of the trace norms which consider
  decompositions~(\ref{eq-short-decomp}) with no additive characters.
\end{remark}

\subsection{Good functions and correlating matrices} 

To deal with the level of generality we consider, it is beneficial at
first to completely forget all the specific properties that $K$ might
have, and to proceed abstractly. Therefore we consider the problem of
bounding the sum $\tsum_V(f,K;p)$ for $K:\Zz/p\Zz\ra \Cc$ a general
function, assuming only that we know that $|K(n)|\leq M$ for some $M$
that we think as fixed.
\par
For the case of Dirichlet characters, Duke, Friedlander and
Iwaniec~\cite{dfi} amplified $K(n)=\chi(n)$ among characters with a
fixed modulus. Given the absence of structure on $K$ in our situation,
this strategy seems difficult to implement.  Instead, we use an idea
found in~\cite{CI}:\footnote{As pointed out in \cite{CI}, this idea
  occured already in the work of Bykovsky \cite{byk} and was also used
  by Blomer and Harcos \cite{blomer-harcos}.}  we consider $K$
``fixed'', and consider the family of sums $\tsum_V(g,K;p)$ for $g$
varying over a basis of modular cusp forms of level $Np$, viewing $f$
(suitably normalized) as an old form at $p$. Estimating the
amplified second moment of $\tsum_V(g,K;p)$ over that family by the
Petersson-Kuznetzov formula and the Poisson formula, we ultimately
have to confront some sums which we call \emph{correlation sums},
which we now define.
\par
We denote by $\hat{K}$ the (unitarily normalized) Fourier transform
modulo $p$ of $K$, given by
$$
\hat{K}(z)=\frac{1}{p^{1/2}}\sum_{x\mods{p}}{K(x)e\Bigl(\frac{zx}{p}\Bigr)}.
$$
\par
For any field $L$, we let $\GL_2(L)$ and $\PGL_2(L)$ act on
$\Pp^1(L)=L\cup \{\infty\}$ by fractional linear transformations as usual.
Now for $\gamma=\begin{pmatrix}a&b\\c&d
\end{pmatrix}
\in\GL_2(\Fp)$ or in $\PGL_2(\Fp)$, we define the correlation sum
$\wwd(K;\gamma)$ by
\begin{equation}\label{eq-correlation}
\wwd(K;\gamma)=\sum_{\stacksum{z\in\Ff_p}{z\not=-d/c}}
    \hat{K}(\gamma\cdot z)\overline{\hat{K}(z)}.
\end{equation}
\par
The matrices $\gamma$ which arise in our amplification are the
reduction modulo $p$ of integral matrices parameterized by various
coefficients from the amplifier, and we need the sums $\wwd(K;\gamma)$
to be as small as possible. 
\par 
If $\|K\|_\infty\leq M$ (or even $\|K\|_2\leq M$), then the
Cauchy-Schwarz inequality and the Parseval formula show that
\begin{equation}\label{eq-bound-wwd}
|\wwd(K;\gamma)|\leq M^2p.
\end{equation}
\par
This bound is, unsurprinsingly, insufficient. Our method is based on
the idea that $\wwd(K;\gamma)$ should be significantly smaller for
most of the $\gamma$ which occur (even by a factor $p^{-1/2}$,
according to the square-root cancellation philosophy) and that we can
control the $\gamma$ where this cancellation does \emph{not} occur. By
this, we mean that these matrices (which we call the set of
\emph{correlation matrices}) is nicely structured and rather small,
unless $\hat K$ is constant, a situation which means that $K(n)$ is
proportional to $e(\frac{an}p)$ for some $a\in\Zz$, in which case we
can use~(\ref{oscillate}) anyway.
 
In this paper, the structure we obtain is algebraic. To discuss it, we
introduce the following notation concerning the algebraic subgroups of
$\PGL_2$: 
\par
-- we denote by $B\subset \PGL_2$ the subgroup of upper-triangular
matrices, the stabilizer of $\infty\in \Pp^1$;
\par
-- we denote by $w=\left(\begin{array}{cc}0 & 1 \\1 &
    0\end{array}\right)$ the Weyl element, so that $Bw$ (resp. $wB$)
is the set of matrices mapping $0$ to $\infty$ (resp $\infty$ to $0$);
\par
-- we denote by $\PGL_{2,par}$ the subset of matrices in $\PGL_2$
which are parabolic, i.e., which have a single fixed point in $\Pp^1$;
\par
-- Given $x\not=y$ in $\Pp^1$, the pointwise stabilizer of $x$ and $y$
is denoted $\rmT^{x,y}$ (this is a maximal torus), and its normalizer
in $\PGL_2$ (or the stabilizer of the set $\{x,y\}$) is denoted
$\rmN^{x,y}$ .

\begin{definition}[Correlation matrices and good
  functions]\label{def-admissible}
  Let $p$ be a prime and $K\,:\, \Ff_p\ra \Cc$ an arbitrary
  function. Let $M\geq 1$ be such that $\|K\|_2\leq M$. 
\par
(1) We let
\begin{equation}\label{eq-hautk}
\hautk{K}{M}=\{\gamma\in\PGL_2(\Ff_p)\,\mid\,
|\wwd(K;\gamma)|>Mp^{1/2}\},
\end{equation}
the set of \emph{$M$-correlation matrices}.
\par
(2) We say that $K$ is \emph{$(p,M)$-good} if there exist at most $M$
pairs $(x_i,y_i)$ of distinct elements in $\Pp^1(\bar{\Ff}_p)$ such
that
\begin{equation}\label{eq-def-admi}
\hautk{K}{M}=\hauti{b}{K}{M}\cup \hauti{p}{K}{M}
\cup \hauti{t}{K}{M}\cup \hauti{w}{K}{M},
\end{equation}
where
\begin{gather*}
  \hauti{b}{K}{M}\subset B(\Ff_p)\cup B(\Ff_p)w\cup wB(\Ff_p),
\quad  \hauti{p}{K}{M}\subset \PGL_{2,par}\\
  \hauti{t}{K}{M}\subset \bigcup_{i}{\rmT^{x_i,y_i}(\Fp)},
\quad
  \hauti{w}{K}{M}\subset \bigcup_{i}{(
\rmN^{x_i,y_i}-\rmT^{x_i,y_i})(\Fp)}.
\end{gather*}
\end{definition}

In other words: given $M\geq 1$ and $p$ a prime, a $p$-periodic function
$K$ is $(p,M)$-good if the only matrices for which the estimate
$|\wwd(K;\gamma)|\leq Mp^{1/2}$ fails are either (1) upper-triangular
or sending $0$ to $\infty$ or $\infty$ to $0$; or (2) parabolic; or
(3) elements which permute two points defined by at most $M$ integral
quadratic (or linear) equations. We note that if we fix such data, a
``generic'' matrix is \emph{not} of this type.
\par
This notion has little content if $M$ is larger that $p^{1/2}$, but we
will already present below some elementary examples of
$(p,M)$-good functions, together with their sets of correlation
matrices for $M$ fixed and $p$ arbitrary large (not surprisingly, all
these examples come from trace functions).
\par
Given a $(p,M)$-good function $K$, we next show using counting
arguments that the set of matrices $\gamma$ constructed from the
amplifier does not intersect the set of correlating matrices in a too
large set and we eventually obtain our main technical result:

\begin{theorem}[Bounds for good
  twists]\label{th-from-admissible-to-orthogonality}
  Let $f$ be a Hecke eigenform, $p$ be a prime number and $V$ a
  function satisfying $(V(C,P,Q))$. Let $M\geq 1$ be given, and let $K$
  be a $(p,M)$-good function modulo $p$ with $\|K\|_\infty\leq M$.
\par
There exists $s\geq 1$ absolute such that 
$$
  \tsum_V(f,K;p)\ll M^sp^{1-\delta}(PQ)^{1/2}(P+Q)^{1/2},
$$
for any $\delta<1/8$, where the implied constant depends only on
$(C,f,\delta)$.
\end{theorem}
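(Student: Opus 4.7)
The plan is to follow the amplification strategy sketched in the introduction, converting the bound for $\tsum_V(f,K;p)$ into a problem about correlation sums $\wwd(K;\gamma)$ and then exploiting the $(p,M)$-good structure to count the contributions from exceptional matrices. First I would treat $f$ (of level $N$) as an oldform at level $Np$, and embed it into a Petersson--Kuznetsov orthonormal basis of cusp forms of level $Np$ of the same archimedean type (weight $k$ or spectral parameter $t_f$). Using an amplifier indexed by primes $\ell$ in a dyadic range $\ell \sim L$, coprime to $Np$, with weights $x_\ell$ attached to $\lambda_f(\ell)$ in the usual way (so that $\sum_\ell x_\ell \lambda_f(\ell) \gg L^{1/2}$), one obtains by positivity and Cauchy--Schwarz the inequality
$$
|\tsum_V(f,K;p)|^2 \ll \frac{1}{L} \sum_g \omega_g^{-1} \Bigl| \sum_\ell x_\ell \lambda_g(\ell) \Bigr|^2 |\tsum_V(g,K;p)|^2,
$$
where $g$ runs over the chosen basis and $\omega_g$ are harmonic weights. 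Opening the square and invoking the Hecke relations $\lambda_g(\ell_1)\lambda_g(\ell_2) = \lambda_g(\ell_1 \ell_2) + \delta_{\ell_1 = \ell_2}$ reduces matters to the spectral sum of products $\tsum_V(g,K;p) \ov{\tsum_V(g,K;p)}$ weighted by $\lambda_g(\ell_1 \ell_2)$.

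Next I would open the two copies of $\tsum_V$ and apply the Petersson--Kuznetsov formula in the spectral variable. The diagonal contribution, after summing over $\ell_1,\ell_2$, is easily controlled and produces the expected saving from the length $L$ of the amplifier. The off--diagonal involves Kloosterman sums $S(\ell_1\ell_2 m, n; cNp)$ to which I would apply Poisson summation in $n$ modulo $p$ (after separating the $V$ weight and the archimedean Bessel transform, which is handled by repeated integration by parts and accounts for the $(PQ)^{1/2}(P+Q)^{1/2}$ factor). The Poisson transformation turns the $n$--sum into a sum over frequencies $h \in \Ff_p$ of $\hat K(h) \hat K(\gamma \cdot h)$ for explicit matrices $\gamma = \gamma(\ell_1\ell_2, m, c) \in \PGL_2(\Fp)$, whose entries involve the amplifier variables and the Kloosterman modulus, reduced modulo $p$. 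After summing over $h$ one is exactly left with the correlation sums $\wwd(K;\gamma)$ of~\refs{eq-correlation}.

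The heart of the argument is then to split the sum over $\gamma$ into those in $\hautk{K}{M}$ and those outside. For $\gamma \notin \hautk{K}{M}$ the definition of $\hautk{K}{M}$ supplies $|\wwd(K;\gamma)| \leq M p^{1/2}$ which yields a satisfactory bound after counting the $(\ell_1,\ell_2,m,c)$ parameters trivially. For $\gamma \in \hautk{K}{M}$ one has only the Parseval bound $|\wwd(K;\gamma)| \leq M^2 p$ of~\refs{eq-bound-wwd}, but now $(p,M)$--goodness tells us that $\gamma$ belongs to one of the four structured subvarieties $\hauti{b}{K}{M}$, $\hauti{p}{K}{M}$, $\hauti{t}{K}{M}$, $\hauti{w}{K}{M}$. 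I would then bound the number of $(\ell_1,\ell_2,m,c)$ for which the associated matrix falls in each subvariety: the Bruhat and parabolic conditions force algebraic identities of small degree between the entries (and hence between $\ell_1\ell_2$, $m$, $c$) whose number of solutions can be counted by elementary divisor and lattice--point estimates; for the torus and normalizer pieces the $M$ pairs $(x_i,y_i)$ give $O(M)$ quadratic conditions, and each produces at most $M^{O(1)}$ admissible matrices modulo $p$, again counted by lifting to integer parameters of controlled size.

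Putting the estimates together and optimizing $L$ against $P$ and $p$ leads to the bound with any $\delta < 1/8$, the Burgess--type exponent coming from the balance between the amplifier length and the quality of the correlation bound $Mp^{1/2}$ (square--root cancellation). The main obstacle, and the only step that is not routine, is the counting of correlation matrices arising from the amplifier that lie in the exceptional structured loci, especially in the parabolic and toric/normalizer components, since the latter depends in a non--uniform way on the geometric points $x_i, y_i \in \Pp^1(\ov{\Ff}_p)$ and requires showing that a typical matrix built from $(\ell_1,\ell_2,m,c)$ hits these thin sets only $O(M^s)$ times with a uniform absolute $s$. All other inputs (Petersson/Kuznetsov, Poisson, Hecke multiplicativity, and the elementary bound $|\wwd| \leq M^2 p$) are standard.
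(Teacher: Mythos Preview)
Your outline is essentially the paper's approach: amplify over forms of level $Np$, apply Petersson--Kuznetsov, reduce the off-diagonal to correlation sums $\wwd(K;\gamma)$, then split according to $\hautk{K}{M}$ and count exceptional matrices. Two points in your sketch need sharpening, however. First, Poisson must be applied in \emph{both} variables $n_1,n_2$ modulo $p$ (after splitting the Kloosterman modulus $cNp$ by twisted multiplicativity); a single Poisson in $n$ leaves one factor $K(m)$ untransformed and does not produce $\hat K(\gamma\cdot z)\overline{\hat K(z)}$. The resonating matrix then depends on the two dual frequencies $n_1,n_2$ and on the factorization $de=\ell_1\ell_2$, not on the original $m$. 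Second, your description of the exceptional counting is off: a torus $\rmT^{x,y}(\Fp)$ has $\asymp p$ points, not $M^{O(1)}$. What one actually shows is that among the \emph{integer} resonating matrices of height $\ll p^{1/4}$, only $O(p^\eps)$ reduce into a fixed torus or normalizer coset; this goes by observing that $2\gamma-\Tr(\gamma)\mathrm{Id}$ is determined up to scalar by the torus, lifting the proportionality to $\Zz$ under the size constraint, and reducing to an integral binary quadratic form equation with $O(p^\eps)$ solutions (Estermann). The parabolic and upper-triangular cases are in fact empty under the size constraint: for parabolics, $\Tr(\gamma)^2=4\det(\gamma)$ forces $(n_1+dn_2)^2=4\ell_1\ell_2$ over $\Zz$, impossible for distinct primes $\ell_1,\ell_2$.
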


\begin{remark} 
  Although it is an elementary step (compare \refs{eq-e-general} and
  \refs{eq-sum-as-corr} in the proof) the beautiful modular
  interpretation of correlation sums is a key observation for this
  paper. It gives a group theoretic interpretation and introduce
  symmetry into sums, the estimation of which might otherwise seem to
  be hopeless.
\end{remark}

\subsection{Trace functions of $\ell$-adic sheaves}\label{intro-l-adic}

The class of functions to which we apply these general considerations
are the \emph{trace functions} modulo $p$, which we now define formally.
\par
Let $p$ be a prime number and $\ell\not=p$ an auxiliary prime. The
functions $K(x)$ modulo $p$ that we consider are the trace functions of
suitable constructible sheaves on $\Aa^1_{\Ff_p}$ evaluated at
$x\in\Ff_p$. To be precise, we will consider $\ell$-adic constructible
sheaves on $\Aa^1_{\Ff_p}$.
The trace function of such a sheaf $\sheaf{F}$ takes values in an
$\ell$-adic field so we also fix an isomorphism $\iota\,:\,
\bar{\Qq}_{\ell}\lra \Cc$, and we consider the functions of the shape
\begin{equation}\label{eq-iota-trace}
  K(x)=\iota(\frtr{\sheaf{F}}{\Ff_p}{x})
\end{equation}
for $x\in \Ff_p$, as in~\cite[7.3.7]{katz-esde}.

\begin{definition}[Trace sheaves]\label{def-admissible-sheaf}
  (1) A constructible $\bar{\Qq}_{\ell}$-sheaf $\sheaf{F}$ on
  $\Aa^1_{\Ff_p}$ is a \emph{trace sheaf} if it is a middle-extension
  sheaf whose restriction
  to any non-empty open subset $U\subset \Aa_\Fp^1$ where $\sheaf{F}$ is
  lisse and \emph{pointwise $\iota$-pure} of weight $0$.
\par
(2) A trace sheaf $\sheaf{F}$ is called a \emph{Fourier trace sheaf}
if, in addition, it is a Fourier sheaf in the sense of
Katz~\cite[Def. 8.2.2]{katz-gkm}.
\par
(3) A trace sheaf is an \emph{isotypic trace sheaf} if it is a Fourier
sheaf and if, for any open set $U$ as in (1), the restriction of
$\sheaf{F}$ to $U$ is {\em geometrically isotypic} when seen as a
representation of the geometric fundamental group of $U$: it is the
direct sum of several copies of some (necessarily non-trivial)
irreducible representation of the geometric fundamental group of $U$
(see~\cite[\S 8.4]{katz-gkm}).
\par
If $\sheaf{F}$ is geometrically irreducible (instead of being
geometrically isotypic), the sheaf will be called an \emph{irreducible
  trace sheaf}.
\end{definition}

We use similar terminology for the trace functions:

\begin{definition}[Trace function]\label{def-trace-weight}
  Let $p$ be a prime number. A $p$-periodic function $K(n)$ defined for
  $n\geq 1$, seen also as a function on $\Ff_p$, is a \emph{trace
    function} (resp. \emph{Fourier trace function}, \emph{isotypic trace
    function}) if there is some trace sheaf (resp. Fourier trace sheaf,
  resp. isotypic trace sheaf) $\sheaf{F}$ on $\Aa^1_{\Ff_p}$ such that
  $K$ is given by~(\ref{eq-iota-trace}).
\end{definition}

We need an invariant to measure the geometric complexity of a trace
function, which may be defined in greater generality.

\begin{definition}[Conductor]\label{def-conductor}
  For an $\ell$-adic constructible sheaf $\sheaf{F}$ on $\Aa^1_{\Fp}$,
  of rank $\rank(\sheaf{F})$ with $n(\sheaf{F})$ singularities in
  $\Pp^1$, and with
$$
\swan(\sheaf{F})=\sum_{x}\swan_{x}(\sheaf{F})
$$
the (finite) sum being over all singularities of $\sheaf{F}$, we
define the \emph{(analytic) conductor} of $\sheaf{F}$ to be
\begin{equation}\label{eq-conductor}
\cond(\sheaf{F})=\rank(\sheaf{F})+n(\sheaf{F})+\swan(\sheaf{F}).
\end{equation}
\par
If $K(n)$ is a trace function modulo $p$, its \emph{conductor} is the
smallest conductor of a trace sheaf $\sheaf{F}$ with trace function
$K$.
\end{definition}

With these definitions, our third main result, which together with
Theorem \ref{th-from-admissible-to-orthogonality} immediately implies
Theorem \ref{th-traceweight}, is very simple to state:

\begin{theorem}[Trace functions are good]
  \label{th-interpret-admissible}
  Let $p$ be a prime number, $N\geq 1$ and $\sheaf{F}$ an isotypic
  trace sheaf on $\Aa^1_{\Ff_p}$, with conductor $\leq N$. Let $K$ be
  the corresponding isotypic trace function. Then $K$ is $(p,aN^s)$-good
  for some absolute constants $a\geq 1$ and $s\geq 1$.
\end{theorem}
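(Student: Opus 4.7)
The plan is to pass to the $\ell$-adic Fourier transform of $\sheaf{F}$, translate each correlation sum $\wwd(K;\gamma)$ into a sum of trace functions on $\Aa^1_{\Ff_p}$, and identify the exceptional $\gamma$ (those for which square-root cancellation fails) with the $\Ff_p$-points of a closed algebraic subgroup of $\PGL_2$.

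Let $\sheaf{G} = \ft(\sheaf{F})$ be the normalized Deligne--Laumon $\ell$-adic Fourier transform. By Laumon's stationary phase principle, $\sheaf{G}$ is again a geometrically isotypic Fourier sheaf on $\Aa^1_{\Ff_p}$, pointwise $\iota$-pure of weight $0$, with conductor bounded polynomially in $\cond(\sheaf{F})$. Up to an explicit unit factor involving a standard Gauss sum,
$$
\hat K(z) = \iota\bigl(\Tr(\frob_{p,z}\mid \sheaf{G})\bigr)\quad\text{for } z\in\Ff_p,
$$
and geometrically $\sheaf{G} \sim \sheaf{G}_0^{\oplus m}$ with $\sheaf{G}_0$ irreducible. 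Extending $\sheaf{G}$ as a middle extension to $\Pp^1$ and pulling back along $\gamma \in \PGL_2(\Ff_p)$ rewrites the correlation sum as
$$
\wwd(K;\gamma) = \sum_{z \in \Ff_p,\, z \neq -d/c}
\iota\Tr\bigl(\frob_{p,z}\mid \gamma^{*}\sheaf{G}\otimes\dual(\sheaf{G})\bigr)
+ O(\cond(\sheaf{F})^{O(1)}),
$$
the error absorbing bad points. The Riemann Hypothesis over finite fields (Deligne), combined with the polynomial bound $\cond(\gamma^{*}\sheaf{G}\otimes\dual(\sheaf{G})) \ll \cond(\sheaf{G})^{O(1)}$, then shows that this sum is $O(\cond(\sheaf{F})^{s_0} p^{1/2})$ for some absolute $s_0 \geq 1$, \emph{unless} $\gamma^{*}\sheaf{G}\otimes\dual(\sheaf{G})$ admits a geometrically trivial subrepresentation.

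Since $\sheaf{G}_0$ is irreducible, this exceptional situation occurs if and only if $\gamma^{*}\sheaf{G}_0 \simeq \sheaf{G}_0$ geometrically. Therefore
$$
\hautk{K}{M} \subset H(\Ff_p), \quad
H = \{g \in \PGL_2 : g^{*}\sheaf{G}_0 \simeq \sheaf{G}_0 \text{ geometrically}\},
$$
for $M = a\cdot\cond(\sheaf{F})^s$ with absolute $a,s \geq 1$, and $H$ is a closed algebraic subgroup of $\PGL_2$.

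It then remains to invoke the classification of closed algebraic subgroups of $\PGL_{2,\bar{\Ff}_p}$: the identity component $H^{\circ}$ is either trivial, a unipotent radical $\rmU^{\xi}$ fixing $\xi \in \Pp^1$, a torus $\rmT^{x,y}$, its normalizer $\rmN^{x,y}$, a Borel subgroup stabilizing $\xi$, or the whole $\PGL_2$. The full case is excluded by the Fourier hypothesis (full $\PGL_2$-symmetry would force $\sheaf{G}_0$ to be geometrically constant, contradicting that $\sheaf{F}$ is a nontrivial Fourier sheaf). The torus, torus-normalizer and unipotent cases fit directly into categories (3), (4) and (2) of Definition~\ref{def-admissible}, with the component group contributing at most $\ll \cond(\sheaf{F})^{O(1)}$ conjugate tori; if $H$ itself is finite, each element is either semisimple (lying in some $\rmT^{x,y}$ or $\rmN^{x,y} - \rmT^{x,y}$) or unipotent (parabolic), and its cardinality is bounded polynomially in $\cond(\sheaf{F})$. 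The main obstacle will be the Borel case: one must show that if $H^{\circ}$ contains a full Borel stabilizing some $\xi \in \Pp^1$, then necessarily $\xi \in \{0,\infty\}$, so that $H(\Ff_p) \subset B(\Ff_p) \cup B(\Ff_p)w \cup wB(\Ff_p)$, matching category (1). This relies on the absence of Artin--Schreier subfactors in $\sheaf{G}$ (since $\sheaf{F}$ is a Fourier sheaf) together with the compatibility of the $\ell$-adic Fourier transform with homotheties and translations on $\Aa^1$, which pins the two Borel-distinguished points to the two Fourier-distinguished points $0$ and $\infty$.
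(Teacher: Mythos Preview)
Your reduction of $\wwd(K;\gamma)$ to a trace sum for $\gamma^*\sheaf{G}\otimes\dual(\sheaf{G})$ and the appeal to Deligne's theorem to conclude $\hautk{K}{M}\subset H(\Ff_p)$ for $H=\haut_{\sheaf{F}}$ is exactly how the paper proceeds (Theorem~\ref{th-cohomological-bound} and Corollary~\ref{cor-interpret}). The divergence, and the trouble, is in your structural analysis of $H$.

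Two concrete gaps. First, in the finite case you assert that $|H|$ is bounded polynomially in $\cond(\sheaf{F})$. That is false in general: a cyclic subgroup of a maximal torus $\rmT^{x,y}$ can have order as large as $p\pm 1$, with no relation to the conductor. What actually makes the finite case work is Dickson's classification of finite subgroups of $\PGL_2(\bar{\Ff}_p)$ of order prime to $p$: they are cyclic, dihedral, or one of $A_4$, $S_4$, $A_5$. The cyclic and dihedral groups sit inside a \emph{single} normalizer $\rmN^{x,y}$ (so only one pair $(x,y)$ is needed in Definition~\ref{def-admissible}), and the exceptional groups have at most $60$ elements, hence at most $59$ tori. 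The paper uses exactly this (see the case $p\nmid|G|$ in the proof).

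Second, your resolution of the Borel case is wrong: there is no mechanism forcing the Borel fixed point to be $0$ or $\infty$, and indeed $\haut_{\sheaf{F}}$ can be $\rmU^{\xi}$ for any $\xi$ (already Example~(2) in \S\ref{sec-pedagogical}, the Kloosterman weight, has $\haut_{\sheaf{F}}$ the unipotent fixing $0$; a translate would move $\xi$). The correct point is that the Borel case \emph{never occurs}: once $H$ contains a nontrivial unipotent fixing $\xi$, conjugating to standard form forces $\sigma^*\sheaf{G}_0\simeq\sheaf{L}_{\psi_a}$ for some $a\neq 0$, and then any putative toral element $x\mapsto tx+s$ in $\sigma^{-1}H\sigma$ would give $\sheaf{L}_{\psi_{at}}\simeq\sheaf{L}_{\psi_a}$, i.e.\ $t=1$. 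So $H$ is exactly $\rmU^{\xi}$, which is purely parabolic and falls under $\hauti{p}{K}{M}$. This simultaneously kills the full $\PGL_2$ case and the possibility $p\mid|H|$ with $H$ finite.

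The paper packages this more efficiently by never invoking the algebraic group structure of $H$: it splits on whether $p$ divides $|\haut_{\sheaf{F}}(\Ff_p)|$, applies Dickson when it does not, and when it does, it extracts from a single order-$p$ element that $\hat K$ agrees (off one point) with the trace function of some $\sheaf{L}_{\psi(a\sigma^{-1}(X))}$, then \emph{replaces} $\sheaf{F}$ by the corresponding sheaf of bounded conductor and quotes Lemma~\ref{lm-divisible-by-p} to get $\haut=\rmU^{x}$.
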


\begin{rem} 
  (1) This sweeping result encompasses the
  functions~(\ref{eq-weight-mixed}) and~(\ref{eq-hypk-weight}) and a
  wide range of algebraic exponential sums, as well as point-counting
  functions for families of algebraic varieties over finite fields.
  From our point of view, the uniform treatment of trace functions is
  one of the main achievements in this paper. In fact our results can
  be read as much as being primarily about trace functions, and not
  Fourier coefficients of modular forms. Reviewing the literature, we
  have, for instance, found several fine works in analytic number
  theory that exploit bounds on exponential sums which turn out to be
  special cases of the correlation sums \eqref{eq-correlation}
  (see~\cite{FrIw,HB3,IwActa,pitt,munshi}). Recent works of the
  authors confirm the usefulness of this notion
  (see~\cite{FKM2,FKMd3}).
\par
(2) Being isotypic is of course not stable under direct sum, but using
Jordan-H\"older components, any Fourier trace function can be written
as a sum (with non-negative integral multiplicities) of isotypic trace
functions, which allows us to extend many results to general trace
functions (see Corollary~\ref{cor-trace-norm}).
\end{rem}

\subsection{The $\ell$-adic Fourier transform and the Fourier-M\"obius
  group}\label{sec-ladic-fourier}

We now recall the counterpart of the Fourier transform at the level of
sheaves, which was discovered by Deligne and developped especially by
Laumon~\cite{laumon}. This plays a crucial role in our work.
\par
Fix a non-trivial additive character $\psi$ of $\Ff_p$ with values in
$\bar{\Qq}_{\ell}$.  For any Fourier sheaf $\sheaf{F}$ on $\Aa^1$, we
denote by $\sheaf{G}_{\psi}=\ft_{\psi}(\sheaf{F})(1/2)$ its
(normalized) \emph{Fourier transform sheaf}, where the Tate twist is
always defined using the choice of square root of $p$ in
$\bar{\Qq}_{\ell}$ which maps to $\sqrt{p}>0$ under the fixed
isomorphism $\iota$ (which we denote $\sqrt{p}$ or $p^{1/2}$). We will
sometimes simply write $\sheaf{G}$, although one must remember that
this depends on the choice of the character $\psi$. Then $\sheaf{G}$
is another Fourier sheaf, such that
$$
  \frtr{\sheaf{G}}{\Ff_p}{y}=
  -\frac{1}{p^{1/2}}\sum_{x\in \Ff_p}{\frtr{\sheaf{F}}{\Ff_p}{x}\psi(xy)}
$$
for any $y\in \Ff_p$ (see~\cite[Th. 7.3.8, (4)]{katz-esde}).
\par
In particular, if $K$ is given by~(\ref{eq-iota-trace}) and $\psi$ is
such that
$$
\iota(\psi(x))=e\Bigl(\frac{x}{p}\Bigr)
$$
for $x\in\Ff_p$ (we will call such a $\psi$ the ``standard character''
relative to $\iota$), then we have
\begin{equation}\label{eq-fourier-trace}
\iota(\frtr{\sheaf{G}}{\Ff_p}{y})= -\hat{K}(y)
\end{equation}
for $y$ in $\Zz$.
\par
A key ingredient in the proof of Theorem~\ref{th-interpret-admissible}
is the following geometric analogue of the set of correlation
matrices:

\begin{definition}[Fourier-M\"obius group]\label{def-fm}
  Let $p$ be a prime number, and let $\sheaf{F}$ be an isotypic trace
  sheaf on $\Aa^1_{\Ff_p}$, with Fourier transform $\sheaf{G}$ with
  respect to $\psi$.  The \emph{Fourier-M\"obius group}
  $\haut_{\sheaf{F}}$ is the subgroup of $\PGL_2(\bar{\Ff}_p)$ defined
  by
$$
\haut_{\sheaf{F}}=\{\gamma\in \PGL_2(\bar{\Ff}_p)\,\mid\,
\gamma^*\sheaf{G}\text{ is geometrically isomorphic to } \sheaf{G}
\}.
$$
\end{definition}

The crucial feature of this definition is that $\haut_{\sheaf{F}}$ is
visibly a group (it is in fact even an algebraic subgroup of
$\PGL_{2,\Fp}$, as follows from constructibility of higher-direct
image sheaves with compact support, but we do not need this in this
paper; it is however required in the sequel~\cite{FKM2}). The
fundamental step in the proof of Theorem~\ref{th-interpret-admissible}
is the fact that, for $\mcF$ of conductor $\leq M$, the set
$\Gg_{K,M}$ of correlation matrices is, for $p$ large enough in terms
of $M$, a subset of $\haut_{\sheaf{F}}$. This will be derived from the
Riemann Hypothesis over finite fields in its most general form (see
Corollary~\ref{cor-interpret}).

\subsection{Basic examples}
\label{sec-pedagogical}

We present here four examples where $\hautk{K}{M}$ can be determined
``by hand'', though sometimes this may require Weil's results on
exponential sums in one variable or even optimal bounds on exponential
sums in {\em three} variables.  This already gives interesting
examples of good functions.
\par
(1) Let $K(n)=e(u n/p)$. Then $\hat{K}(v)=p^{1/2}\delta_{v\equiv
  -u\mods{p}},$ so that $\wwd(K;\gamma)=0$ unless $\gamma\cdot
(-u)=-u$, and in the last case we have $\wwd(K;\gamma)=p$. Thus, if
$M\geq 1$, we have
$$
\hautk{K}{M}=\{\gamma\in\PGL_2(\Ff_p)\,\mid\, \gamma\cdot (-u)=-u\}
$$
and, for $1\leq M<p^{1/2}$, the function $K$ is \emph{not}
$(p,M)$-good (yet non-correlation holds). 
\par
Dually, we may consider the function 
$$
K(n)=p^{1/2}\delta_{n\equiv u \mods p}
$$
for some fixed $u\in\Fp$, for which the Fourier transform is $\hat
K(v)=e(uv/p)$. Then we get
$$
\wwd(K;\gamma)=\sum_{z\not=-d/c}e\Bigl(u\frac{z-(az+b)\ov{(cz+d)}}p\Bigr)\hbox{ for }\gamma=\begin{pmatrix}a&b\\c&d
\end{pmatrix}.
$$
\par
If $u=0$, this sum is $\geq p-1$ for {\em every} $\gamma$ and for
$1\leq M<p^{1/2}-1$, the function $K$ is \emph{not} $(p,M)$-good.
\par
For $u\not=0$, we get $|\wwd(K;\gamma)|=p$ if $a-d=c=0$,
$\wwd(K;\gamma)=0$ if $a-d\not=0$ and $c=0$ and otherwise, the sum is
a Kloosterman sum so that $|\wwd(K;\gamma)| \leq 2p^{1/2}$, by Weil's
bound.  In particular, for $M\geq 3$ and $p$ such that $p>3\sqrt{p}$,
$$
\hautk{K}{M}=\Bigl\{\begin{pmatrix}1&t\\0&1
\end{pmatrix}
\Bigr\}\subset \PGL_2(\Ff_p).
$$
\par
Thus $K$ is $(p,3)$-good for all $p\geq 17$.
\par
(2) Recall that the classical Kloosterman sums are defined by
$$
S(e,f;q)=\sum_{x\in (\Zz/q\Zz)^{\times}}{
e\Bigl(\frac{ex+f\bar{x}}{q}\Bigr)
}
$$
for $q\geq 1$ an integer and $e,f\in\Zz$.
\par
We consider $K(n)=S(1,n;p)/\sqrt{p}$ for $1\leq n\leq p$. By Weil's
bound for Kloosterman sums, we have $|K(n)|\leq 2$ for all $n$. We get
$\hat{K}(v)=0$ for $v=0$ and
$$
\hat{K}(v)=e\Bigl(-\frac{\bar{v}}{p}\Bigr)
$$
otherwise. For $\gamma=\begin{pmatrix}a&b\\c&d
\end{pmatrix}\in\PGL_2(\Ff_p)$, we find
$$
\wwd(K;\gamma)=
\sums_{z}{e\Bigl(\frac{\bar{z}-(cz+d)\overline{(az+b)}}{p}\Bigr)}
$$
where $\sums$ restricts the sum to those $z\notin\{0,-d/c,-b/a\}$ in
$\Ff_p$. According to the results of Weil, we have
$|\wwd(K;\gamma)| \leq 2p^{1/2}$ \emph{unless} the rational function
\begin{equation}\label{eq-which-fn}
\frac{1}{X}-\frac{cX+d}{aX+b}\in\Ff_p(X)
\end{equation}
is of the form $\phi(X)^p-\phi(X)+t$ for some constant $t\in\Ff_p$
and $\phi\in\Ff_p(X)$ (and of course, in that case the sum is $\geq
p-3$). Looking at poles we infer that in that later case $\phi$ is necessarily constant. Therefore, for $M\geq 3$ and
$p$ such that $p-3>3\sqrt{p}$, the set $\hautk{K}{M}$ is the set of
$\gamma$ for which~(\ref{eq-which-fn}) is a constant. A moment's
thought then shows that
$$
\hautk{K}{M}=\Bigl\{\begin{pmatrix}1&0\\t&1
\end{pmatrix}
\Bigr\}\subset \PGL_2(\Ff_p).
$$
\par
Thus $K$ is $(p,3)$-good for all $p\geq 17$.  
\par
(3) Let $K(n)=e(n^2/p)$. For $p$ odd, we get 
$$
\hat{K}(v)=\frac{\tau_p}{p^{1/2}}e\Bigl(-\frac{\bar{4}v^2}{p}\Bigr)
$$
by completing the square, where $\tau_p$ is the quadratic Gauss sum. 
Since $|\tau_p|^2=p$, we find for $\gamma\in\PGL_2(\Ff_p)$ as above
the formula
$$
\wwd(K;\gamma)=\sum_{z\not=-d/c}{
e\Bigl(\frac{\bar{4}(z^2-(az+b)^2\overline{(cz+d)}^2)}{p}\Bigr)
}
.
$$
For $p\geq 3$, Weil's theory shows that $|\wwd(K;\gamma)|\leq 2p^{1/2}$
for all $\gamma$ such that the rational function
$$
X^2-\frac{(aX+b)^2}{(cX+d)^2}
$$
is not  constant and otherwise $|\wwd(K;\gamma)|\geq p-1$.
\par
Thus for $M\geq 2$ and $p\geq 7$ (when $p-1>2p^{1/2}$), the set
$\hautk{K}{M}$ is the set of $\gamma$ for which this function is
constant: this requires $c=0$ (the second term can not have a pole),
and then we get the conditions $b=0$ and $(a/d)^2=1$, so that
$$
\hautk{K}{M}=\Bigl\{
1,\begin{pmatrix}-1&0\\0&1
\end{pmatrix}
\Bigr\}\subset B(\Ff_p)\subset \PGL_2(\Ff_p).
$$
\par
Thus that function $K$ is $(p,2)$-good for all primes
$p\geq 7$.
\par
(4) Let $K(n)=\chi(n)$ where $\chi$ is a non-trivial Dirichlet
character modulo $p$. Then we have
$\hat{K}(v)=\bar{\chi}(v)\frac{\tau(\chi)}{p^{1/2}}$ for all $v$, where
$$
\tau(\chi)=\sum_{x\in\Fp}{\chi(x)e\Bigl(\frac{x}{p}\Bigr)}
$$
is the Gauss sum associated to $\chi$. Then for $\gamma$ as above, we
have
\begin{align*}
  \wwd(K;\gamma)=
  \sum_{z\not=-b/a}{\bar{\chi}(\gamma\cdot z)\chi(z)}
    =\sum_{z\not=-b/a}{ \chi\Bigl(z\frac{cz+d}{az+b}\Bigr) }.
\end{align*}
\par
Again from Weil's theory, we know that $ |\wwd(K;\gamma)| \leq
2p^{1/2} $ \emph{unless} the rational function 
$$
\frac{X(cX+d)}{(aX+b)}
$$ 
is of the form $tP(X)^h$ for some $t\in\Ff_p$ and $P\in\Ff_p(X)$,
where $h\geq 2$ is the order of $\chi$ (and in that case, the sum has
modulus $\geq p-3$). This means that for $M\geq 2$, and $p\geq 11$,
the set $\hautk{K}{M}$ is the set of those $\gamma$ where this
condition is true. Looking at the order of the zero or pole at $0$, we
see that this can only occur if either $b=c=0$ (in which case the
function is the constant $da^{-1}$) or, in the special case $h=2$,
when $a=d=0$ (and the function is $cb^{-1}X^2$). In other words, for
$p\geq 11$ and $M\geq 2$, we have
$$
\hautk{K}{M}=\Bigl\{
\begin{pmatrix}
a&0\\
0&d
\end{pmatrix}\Bigr\}
$$
if $h\not=2$, and 
$$
\hautk{K}{M}=\Bigl\{
\begin{pmatrix}
a&0\\
0&d
\end{pmatrix}\Bigr\}
\cup
\Bigl\{
\begin{pmatrix}
0&b\\
c&0
\end{pmatrix}\Bigr\}
$$
if $\chi$ is real-valued. In both cases, these matrices are all in
$B(\Ff_p)\cup B(\Ff_p)w$, so that the function $\chi(n)$ is
$(p,2)$-good, for all $p\geq 11$.

\subsection{Notation} 

As usual, $|X|$ denotes the cardinality of a set, and we write
$e(z)=e^{2i\pi z}$ for any $z\in\Cc$.  If $a\in\Zz$ and $n\geq 1$ are
integers and $(a,n)=1$, we sometimes write $\bar{a}$ for the inverse
of $a$ in $(\Zz/n\Zz)^{\times}$; the modulus $n$ will always be clear
from context. We write $\Ff_p=\Zz/p\Zz$.
\par
By $f\ll g$ for $x\in X$, or $f=O(g)$ for $x\in X$, where $X$ is an
arbitrary set on which $f$ is defined, we mean synonymously that there
exists a constant $C\geq 0$ such that $|f(x)|\leq Cg(x)$ for all $x\in
X$. The ``implied constant'' refers to any value of $C$ for which this
holds. It may depend on the set $X$, which is usually specified
explicitly, or clearly determined by the context. We write $f(x)\asymp
g(x)$ to mean $f\ll g$ and $g\ll f$. The notation $n\sim N$ means that 
the integer $n$ satisfies the inequalities  $N<n\leq 2N$.
We denote the divisor function by $d(n)$.
\par
Concerning sheaves, for
  $a\not=0$, we will write $[\times a]^*\sheaf{F}$ for the pullback of
  a sheaf $\sheaf{F}$ on $\Pp^1$ under the map $x\mapsto ax$.
\par
For a sheaf $\sheaf{F}$ on $\Pp^1/k$, where $k$ is an algebraic
closure of a finite field, and $x\in\Pp^1$, we write $\sheaf{F}(x)$
for the representation of the inertia group at $x$ on the geometric
generic fiber of $\sheaf{F}$, and $\sheaf{F}_x$ for the stalk of
$\sheaf{F}$ at $x$. 
\par
For $\sheaf{F}$ a sheaf on $\Pp^1/k$, where now $k$ is a finite field
of characteristic $p$, and for $\nu$ an integer or $\pm 1/2$, we also
write $\sheaf{F}(\nu)$ for the Tate twist of $\sheaf{F}$, with the
normalization of the half-twist as discussed in
Section~\ref{sec-ladic-fourier} using the underlying isomorphism
$\iota\,:\, \bar{\Qq}_{\ell}\ra \Cc$. From context, there should be no
confusion between the two possible meanings of the notation
$\sheaf{F}(x)$.

\subsection{Acknowledgments} 

This paper has benefited from the input of many people. We would like
to thank V. Blomer, T. Browning, J. Ellenberg, C. Hall, H. Iwaniec,
N. Katz, E. Lindenstrauss, P. Nelson, R. Pink, G. Ricotta, P. Sarnak,
A. Venkatesh and D. Zywina for input and encouraging comments. We also
thank B. L\"offel and P. Nelson for their careful readings of the
manuscript. We particularly thank G. Harcos, whose decisive comments
on an earlier version of this paper have led to a significant
improvement on the value of the exponents as well as the referee who
read the paper with considerable attention, caught many slips and made
many helpful comments on the penultimate version of this paper.

\section{Some applications}\label{sec-app}

\subsection{Proof of
  Corollary~\ref{cor-intervals}}\label{sec-intervals}

We explain here how to derive bounds for sums over intervals with
sharp cut-offs from our main results. 
\par
Taking differences, it is sufficient to prove the following slightly
more precise bound: for any $\delta<1/8$ and any $1\leq X\leq p$, we have
$$
\sum_{1\leq n\leq X}\rho_f(n)K(n)
\ll_{\cond(K),f,\delta} X^{3/4}p^{1/4-\delta/2},
$$
since the right-hand side is always $\ll p^{1-\delta/2}$.

\begin{remark} Observe that, by taking $\delta$ close enough to $1/8$,
  we obtain here a stronger bound than the ``trivial'' estimate of
  size $\ll_{\cond(K),f} X$ coming from \eqref{RankinSelberg}, as long as
  $X\geq p^{3/4+\eta}$ for some $\eta>0$.
\end{remark}

By a dyadic decomposition it is sufficient to prove that for $1\leq
X\leq p/2$, we have
$$
\sum_{X\leq n\leq 2X}\rho_f(n)K(n)
\ll_{\cond(K),f,\delta}
X^{3/4}p^{1/4-\delta/2}
$$ 
for any $\delta<1/8$. We may assume that 
\begin{equation}\label{lowerboundforX}
X> 16p^{1-2\delta}
\end{equation}
for otherwise the trivial bound (see the previous remark) implies the
required bound.

Let $\Delta<1/2$ be a parameter, and let $W\,:\, [0,+\infty[\lra
[0,1]$ be a smooth function with $0\leq W\leq 1$, compactly supported
on the interval $[1-\Delta,2+\Delta]$, equal to $1$ on $[1,2]$ and
satisfying
$$
x^jW^{(j)}(x)\ll \Delta^{-j}
$$
for any $j\geq 0$. Then, provided $\Delta X\gg p^{3/5}$, we deduce
from~\eqref{RankinSelberg} that
$$
\sum_{X\leq n\leq 2X}\rho_f(n)K(n)= \sum_{n\geq
  1}\rho_f(n)K(n)W\Bigl(\frac{n}{X}\Bigr)+O(\|K\|_{\infty}\Delta
X),
$$
where the implied constant depends only on $f$. By
Theorem~\ref{th-traceweight} applied to $V(x)=W(px/X)$ with
$Q=\Delta^{-1}>2$ and $P=X/p\leq 1$, we have
$$
  \sum_{n\geq 1}\rho_f(n)K(n)W\Bigl(\frac{n}{X}\Bigr)
  \ll p^{1-\delta}(PQ)^{1/2}(P+Q)^{1/2}\\
  \ll \Delta^{-1}X^{1/2}p^{1/2-\delta}
$$
for any $\delta<1/8$ where the implied constant depends on $f$, $\cond(K)$ and $\delta$.
 Hence we derive
$$
\sum_{X\leq n\leq 2X}\rho_f(n)K(n)
\ll X\Bigl(\Delta+\Delta^{-1}p^{1/2-\delta}X^{-1/2}\Bigr).
$$
\par
We pick 
$$
\Delta=\Bigl(p^{1/2-\delta}X^{-1/2}\Bigr)^{1/2}.
$$
which is $<1/2$ by \eqref{lowerboundforX}.Then we get
$$
\Delta X\geq p^{-\delta/2}X\geq p^{1-5\delta/2}>p^{11/16}>p^{3/5}
$$
so the above inequality applies to give
$$
\sum_{X\leq n\leq 2X}\rho_f(n)K(n)
\ll X^{3/4}p^{1/4-\delta/2}.
$$
as we wanted.


\subsection{Characters and Kloosterman sums}

We first spell out the examples of the introduction involving the
functions \refs{eq-weight-mixed} and \refs{eq-hypk-weight}. We give the
proof now to illustrate how concise it is given our results, referring
to later sections for some details.

\begin{corollary}\label{cor-main}
  Let $f$ be any cusp form, $p$ a prime and $K$ given
  by
$$K(n)=\begin{cases}
  e\Bigl(\frac{\phi_1(n)}{p}\Bigr)\chi(\phi_2(n)),&\text{ if }
  p\nmid S_1(n)S_2(n)\\
  0&\text{otherwise}
\end{cases}
$$ 
or by
$$
K(n)=\begin{cases} \Phi\big(\hypk_m(\phi(n);p),
  \ov{\hypk_m(\phi(n);p)}\big),&\text{ if }
  p\nmid S(n)\\
  0&\text{otherwise}.
\end{cases}
$$
\par
Let $V$ satisfy $(V(C,P,Q))$. Then for any $\delta<1/8$, we have
$$
\tsum(f,K;p)\ll p^{1-\delta}(PQ)^{1/2}(P+Q)^{1/2},
$$
and 
$$
\sum_{n\in I}{\rho_f(n)K(n)}\ll p^{1-\delta/2}
$$
for any interval $I\subset[1,p]$, where the implied constant depends
only on $C,f,\delta$, $\phi_1$ and $\phi_2$ or $\phi$ and $\Phi$.
\end{corollary}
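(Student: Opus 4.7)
The plan is to write each weight $K$ as a short linear combination of isotypic trace weights with conductor bounded purely in terms of the input data (plus, if necessary, an additive-character term), and then to apply Theorem~\ref{th-traceweight} to each isotypic summand, and Corollary~\ref{cor-intervals} for the second bound.

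For the first weight, I would observe that, away from its zero set, $K$ is the trace function (via $\iota$) of the middle-extension to $\Aa^1_{\Ff_p}$ of the rank-one lisse sheaf $\sheaf{L}_{\psi(\phi_1)}\otimes \sheaf{L}_{\chi(\phi_2)}$, where $\psi$ is the standard additive character relative to $\iota$. This sheaf is pointwise $\iota$-pure of weight zero, and its conductor is bounded purely in terms of $\deg R_1,\deg S_1,\deg R_2,\deg S_2$. It is a Fourier sheaf unless it is geometrically isomorphic to some $\sheaf{L}_{\psi(\alpha X)}$; this degenerate case forces $\phi_1$ to be linear and $\chi\circ\phi_2$ to be geometrically trivial, and then $K$ reduces to an additive character for which \eqref{oscillate} gives the bound. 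Otherwise the sheaf is geometrically irreducible, hence isotypic, and Theorem~\ref{th-traceweight} applies directly.

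For the second weight, I would expand $\Phi(U,V)=\sum c_{a,b}U^aV^b$ and reduce to monomial weights $\hypk_m(\phi(n);p)^a\overline{\hypk_m(\phi(n);p)}^b$. Each such monomial is the trace function of the middle-extension of $\phi^*\bigl(\HYPK_m^{\otimes a}\otimes \dual(\HYPK_m)^{\otimes b}\bigr)$, where $\HYPK_m$ is Deligne's hyper-Kloosterman sheaf on $\Gm$. Katz's determination of the geometric monodromy group of $\HYPK_m$ as a classical group \cite{katz-gkm} allows one to decompose these tensor products geometrically into a bounded number of irreducible constituents, with rank, number of singularities, and Swan conductor controlled in terms of $m, a, b$ alone; pulling back by $\phi$ then controls the total conductor in terms of $m$, $a$, $b$, and $\deg \phi$. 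I would then isolate the possibly occurring geometrically trivial summands (which contribute Artin-Schreier pieces of the form $e(\alpha n/p)$, handled by \eqref{oscillate}) from the Fourier isotypic components, apply Theorem~\ref{th-traceweight} to each of the latter, and sum with the coefficients $c_{a,b}$.

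The bound for $\sum_{n\in I}\rho_f(n)K(n)$ follows from the same decomposition together with Corollary~\ref{cor-intervals}. The main obstacle is the effective control of the isotypic decomposition of the tensor products $\HYPK_m^{\otimes a}\otimes \dual(\HYPK_m)^{\otimes b}$ with conductor bounded in $m, a, b$: this step ultimately rests on Katz's structural theorems identifying the geometric monodromy groups of hyper-Kloosterman sheaves, from which classical invariant-theoretic arguments yield the required uniform bounds on the constituents.
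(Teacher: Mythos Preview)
Your approach is essentially the same as the paper's: reduce to monomials, realize the weight as the trace function of the tensor-power sheaf $\phi^*(\HYPK_m^{\otimes a}\otimes \dual(\HYPK_m)^{\otimes b})$, split off the Artin-Schreier constituents (handled by~\eqref{oscillate}), and apply Theorem~\ref{th-traceweight} to the remaining isotypic pieces. The first case is identical to the paper's treatment.

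The one substantive difference is in how you propose to control the isotypic decomposition in the hyper-Kloosterman case. You invoke Katz's identification of the geometric monodromy group of $\HYPK_m$ and classical invariant theory to decompose the tensor power and bound the conductors of the constituents. This works, but it is considerably heavier than what is needed. The paper avoids this entirely: it simply takes the semisimplification of the full tensor-power sheaf $\sheaf{F}$, separates the direct summands geometrically isomorphic to some $\sheaf{L}_{\psi}$ from the rest, and observes that the remaining part $\sheaf{F}_1$ is a Fourier trace sheaf whose conductor is bounded directly (rank $\leq m^{a+b}$, singularities among $\phi^{-1}(\{0,\infty\})$, Swan conductors controlled by those of $\HYPK_{m,\phi}$). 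The general decomposition of a Fourier trace weight into isotypic trace weights (Proposition~\ref{pr-shifted}) then gives at most $\rank(\sheaf{F}_1)$ isotypic summands, each of conductor $\leq \cond(\sheaf{F}_1)$. No knowledge of the monodromy group is required; only Jordan--H\"older and the elementary bound that each constituent of a middle-extension sheaf has conductor at most that of the ambient sheaf. So what you flag as the ``main obstacle'' is in fact not one: the conductor bound and the count of isotypic pieces come for free from the rank and the local data of the tensor product, without any structural input about which representations actually occur.
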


\begin{proof} 
  The first case follows directly from Theorem \ref{th-traceweight} if
  $\phi_1$ and $\phi_2$ satisfy the assumption of
  Theorem~\ref{pr-char-trace-weights}. Otherwise we have
  $K(n)=e(\frac{an+b}p)$ and the bound follows from \refs{oscillate}.
\par
In the second case, we claim that $\tnorm{K}{s}\ll 1$, where the
implied constant depends only on $(m,\phi,\Phi)$, so that
Corollary~\ref{cor-trace-norm} applies. Indeed, the triangle
inequality shows that we may assume that $\Phi(U,V)=U^uV^v$ is a
non-constant monomial. Let $\HYPK_{m,\phi}$ be the hyper-Kloosterman
sheaf discussed in \S \ref{hyperklo}, $\widetilde\HYPK_{m,\phi}$ its
dual. We consider the sheaf of rank $m^{u+v}$ given by
$$
\mcF=\HYPK_{m,\phi}^{\otimes
  u}\otimes\widetilde\HYPK_{m,\phi}^{\otimes v}
$$
with associated trace function
$$
K(n)=\bigl((-1)^{m-1}\hypk_m(\phi(n);p)\bigr)^u
\bigl((-1)^{m-1}\ov{\hypk_m}(\phi(n);p)\bigr)^v.
$$
\par
We have
$$
\cond(\mcF)\leq 5^{\alpha_{u+v}}(2m+1+\deg(RS))^{\beta_{u+v}}
$$
by combining Proposition~\ref{pr-hyperklo} and
Proposition~\ref{pr-bound-h1} (3) for some constants $\alpha_{n}$ and
$\beta_n$ (determined by $\alpha_0=0$, $\alpha_{n+1}=2\alpha_n+1$,
$\beta_0=1$, $\beta_{n+1}=2\beta_n+2$; note that this rought bound
could be improved easily).
\par
We replace ${\sheaf{F}}$ by its semisimplification (without changing
notation), and we write
$$
\sheaf{F}={\sheaf{F}}_1\oplus {\sheaf{F}}_2,\quad\quad 
K=K_1+K_2
$$
where ${\sheaf{F}}_2$ is the direct sum of the irreducible components
of $\sheaf{F}$ which are geometrically isomorphic to Artin-Schreier
sheaves $\sheaf{L}_{\psi}$, and $\sheaf{F}_1$ is the direct sum of the
other components. The trace function $K_2$ of $\sheaf{F}_2$ is a sum
of at most $m^{u+v}$ additive characters (times complex numbers of
modulus $1$) so
$$
\tnorm{K_2}{s}\leq m^{u+v}.
$$ 
\par
On the other hand, each geometrically isotypic component of
$\sheaf{F}_1$ have conductor bounded by that of $\sheaf{F}$, and
therefore
$$
\tnorm{K_1}{s}\leq (5m)^{u+v} (2m+1+\deg(RS))^{2s(u+v)}
$$
(Compare with Proposition~\ref{pr-shifted}).
\end{proof}

\subsection{Distribution of twisted Hecke orbits and horocycles}
\label{ssec-orbits}

We present here a geometric consequence of our main result. Let
$Y_0(N)$ denote the mo\-dular curve $\Gamma_0(N)\bash \Hh$. For a
prime $p$ coprime to $N$, we denote by $\tilde{T}_p$ the geometric
Hecke operator that acts on complex-valued functions $f$ defined on
$Y_0(N)$ by the formula
$$
\tilde{T}_p(f)(z)=\frac{1}{p+1}\sum_{t\in\Pp^1(\Ff_p)}f(\gamma_t\cdot
z)
$$
where
$$
\gamma_{\infty}=
\begin{pmatrix}p & 0 \\0 & 1\end{pmatrix},\quad
\gamma_t=\begin{pmatrix} 1 & t \\0 & p\end{pmatrix},\quad \text{ for
}t\in\Ff_p
$$ 
(note that this differs from the usual Hecke operator
$T_p=(p+1)p^{-1/2}\tilde{T}_p$ acting on Maass forms, defined
in~(\ref{eq-hecke-operator})).
\par
As we will also recall more precisely in
Section~\ref{sec-automorphic}, the $L^2$-space
$$
\mcL^2(N)=\Bigl\{g\,:\,  Y_0(N)\lra \Cc\,\mid\,
\int_{Y_0(N)}{|g(z)|^2\frac{dxdy}{y^2}}<+\infty\Bigr\},
$$ 
has a basis consisting of $\tilde{T}_p$-eigenforms $f$, which are
either constant functions, Maass cusp forms or combinations of
Eisenstein series, with eigenvalues $\nu_f(p)$ such that
\begin{equation}
\label{nontrivialhecke}
|\nu_f(p)|\leq 2p^{\theta-1/2}
\end{equation}
for some absolute constant $\theta<1/2$ (e.g., one can take
$\theta=7/64$ by the work of Kim and Sarnak~\cite{kim-sarnak}). This
bound implies the well-known equidistribution of the Hecke orbits
$\{\gamma_t\cdot \tau\}$ for a fixed $\tau\in Y_0(N)$, as $p$ tends to
infinity. Precisely, let
$$
\mu_{p,\tau}=
\frac{1}{p+1}\sum_{t\in\Pp^1(\Ff_p)}\delta_{\Gamma_0(N)\gamma_t\cdot
  \tau}
$$
where, for any $\tau\in\Hh$, $\delta_{\Gamma_0(N)\tau}$ denotes the
Dirac measure at $\Gamma_0(N)\tau\in Y_0(N)$. Then
$$
\mu_{p,\tau}\ra \mu
$$
as $p\ra+\infty$, in the weak-$*$ sense, where $\mu$ is the hyperbolic
probability measure on $Y_0(N)$.
\par
Note that all but one point of the Hecke orbit lie on the horocycle at
height $\Im(\tau)/p$ in $Y_0(N)$ which is the image of the segment
$x+i\Im(\tau)/p$ where $0\leq x\leq 1$, so this can also be considered
as a statement on equidistribution of discrete points on such
horocycles.
\par
We can then consider a variant of this question, which is suggested by
the natural parameterization of the Hecke orbit by the
$\Ff_p$-rational points of the projective line. Namely, given a
complex-valued function
$$
K\,:\, \Ff_p\ra \Cc 
$$
and a point $z\in Y_0(N)$, we define a \emph{twisted measure}
\begin{equation}\label{eq-twisted-measure}
\mutw{p}{\tau}{K}=\frac{1}{p}\sum_{t\in
  \Ff_p}K(t)\delta_{\Gamma_0(N)\gamma_t\cdot \tau},
\end{equation}
which is now a (finite) signed measure on $Y_0(N)$.
\par
We call these ``algebraic twists of Hecke orbits'', and we ask how
they behave when $p$ is large. For instance, $K$ could be a
characteristic function of some subset $A_p\subset \Ff_p$, and
we would be attempting to detect whether the subset $A_p$ is somehow
biased in such a way that the corresponding fragment of the Hecke
orbit always lives in a certain corner of the curve $Y_0(N)$. We will
prove that, when $1_{A_p}$ can be expressed or approximated by a
linear combination of the constant function $1$ and trace functions with
bounded conductors, this type of behavior is forbidden. For instance
if $A_p=\square(p)$ is the set of quadratic residues modulo $p$ one
has 
$$
1_{\square(p)}(t)=\frac{1}2\Bigl(1+\Bigl(\frac tp\Bigr)\Bigr),
$$ 
for $(\frac\cdot p)$ the Legendre symbol; this case is discussed
in~\cite[\S 1.2, 1.3]{mv}, where it is pointed out that it is
intimately related to the Burgess bound for short character sums and
to subconvexity bounds for Dirichlet $L$-functions of real characters
and twists of modular forms by such characters.
\par
Our result is the following:
 
\begin{theorem}\label{weightedshorthorocycles} 
  Let $M\geq 1$. For each prime $p$, let $K_p$ be an isotypic trace
  function modulo $p$ with conductor $\leq M$ and $I_p\subset [1,p]$
  an interval.
\par
  Let $\mutw{p}{\tau}{K_p,I_p}$ be the signed measure
$$
\mutw{p}{\tau}{K_p,I_p}=\frac{1}{|I_p|}\sum_{t\in
  I_p}K_p(t)\delta_{\Gamma_0(N)\gamma_t\cdot \tau}.
$$ 
\par
Then, for any given $\tau\in \Hh$, and $I_p$ such that $|I_p|\geq
p^{1-\delta}$ for some fixed $\delta<1/8$, the measures
$\mutw{p}{\tau}{K_p,I_p}$ converge to $0$ as $p\ra+\infty$.
\end{theorem}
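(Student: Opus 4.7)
Paragraph 1 (Spectral reduction). The plan is to establish weak-$*$ convergence of $\mutw{p}{\tau}{K_p,I_p}$ to the zero measure by testing against a spanning set of $\mcL^2(N)$. By the spectral decomposition described in Section~\ref{ssec-orbits}, it suffices to prove that $\mutw{p}{\tau}{K_p,I_p}(g) \to 0$ for each Hecke eigenform $g$: namely, for the constant function, for Hecke--Maass cusp forms, and for Eisenstein series. One therefore fixes such a $g$ and aims to show that $|I_p|^{-1} \sum_{t \in I_p} K_p(t) g(\gamma_t \tau) = o(1)$.

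Paragraph 2 (Constant case). For $g$ constant, the assertion reduces to $|I_p|^{-1} \sum_{t \in I_p} K_p(t) = o(1)$, i.e.\ to a Polya--Vinogradov estimate for the isotypic trace weight $K_p$ over an interval. Since $K_p$ arises from a Fourier trace sheaf (excluding Artin--Schreier factors), the $\ell$-adic Fourier transform gives uniformly bounded values $|\what{K_p}(h)| \ll \cond(K_p)^{O(1)}$ on $\Fp$. Completing the sum, $\sum_{t\in I_p} K_p(t) = p^{-1/2} \sum_h \what{1_{I_p}}(h)\,\what{K_p}(h)$, and the bound $\|\what{1_{I_p}}\|_1 \ll p\log p$ yields $\sum_{t \in I_p} K_p(t) \ll p^{1/2}\log p$, which divided by $|I_p| \geq p^{1-\delta}$ is $o(1)$ for $\delta < 1/2$ (in particular for $\delta<1/16$).

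Paragraph 3 (Hecke eigenform case). For $g$ a Hecke--Maass cusp form or Eisenstein series with Fourier expansion $g(z) = \sum_{n \neq 0} \rho_g(n) \kappa_g(ny) e(nx)$ at $\infty$, one substitutes $z = \gamma_t\tau = (\tau+t)/p$ and uses the rapid decay of the Whittaker function $\kappa_g$ to truncate effectively at $|n| \leq N_0 := p^{1+\eps}/\Im\tau$. This gives
$$S := \sum_{t \in I_p} K_p(t) g(\gamma_t\tau) = \sum_{n} \rho_g(n) A_n \phi_n, \qquad A_n = \kappa_g\!\left(\tfrac{n\Im\tau}{p}\right) e\!\left(\tfrac{n\Re\tau}{p}\right), \quad \phi_n = \sum_{t \in I_p} K_p(t) e\!\left(\tfrac{nt}{p}\right).$$
To bring in the trace-weight hypothesis on $K_p$, one Fourier-completes $1_{I_p}$ on $\Fp$, writing $\phi_n = p^{-1/2} \sum_h \what{1_{I_p}}(h)\,\what{K_p}(n-h)$. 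By Laumon's theorem, $\what{K_p}$ is the trace function of the Fourier transform sheaf of the one attached to $K_p$, which is again an isotypic Fourier trace sheaf of conductor polynomially bounded by $\cond(K_p)$, up to a possible ``punctual'' component (a direct summand of Artin--Schreier type, contributing characters that are treated separately via the classical bound~\refs{oscillate}). Interchanging the summation order gives
$$S = p^{-1/2} \sum_h \what{1_{I_p}}(h)\, T(h), \qquad T(h) := \sum_n \rho_g(n) A_n \what{K_p}(n-h).$$

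Paragraph 4 (Combining and the main obstacle). For each fixed $h$, the sum $T(h)$ is precisely of the type covered by Theorem~\ref{th-traceweight}: a smooth sum of $\rho_g(n)$ against the translated isotypic trace weight $n \mapsto \what{K_p}(n-h)$, with smooth parameters $P \asymp 1/\Im\tau$, $Q = O(1)$. Theorem~\ref{th-traceweight} then yields the uniform estimate $|T(h)| \ll_{g,\tau,\cond(K_p)} p^{1-\delta+\eps}$ for any $\delta < 1/8$. The final step is to combine these bounds over $h$: a Cauchy--Schwarz estimate
$$|S|^2 \leq p^{-1} \Big(\sum_h |\what{1_{I_p}}(h)|^2\Big) \Big(\sum_h |T(h)|^2\Big),$$
using Parseval $\sum_h |\what{1_{I_p}}(h)|^2 = p |I_p|$ together with a judicious dyadic decomposition in $h$ that interpolates between the pointwise Theorem~\ref{th-traceweight} bound on $T(h)$ (sharp for $|h|$ small) and Plancherel on its second moment (sharp for $|h|$ large), leads to a bound of the form $|S|/|I_p| \ll p^{-\eta}$ for some $\eta > 0$ whenever $|I_p| \geq p^{1-\delta'}$ with $\delta' < 1/16$. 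The main obstacle is precisely this combining step: one must balance the concentration of $\what{1_{I_p}}$ near $h = 0$ against the uniformity of Theorem~\ref{th-traceweight}'s bound, and the loss of a factor of two in the exponent ($1/8 \rightsquigarrow 1/16$) is inherent to the square root in the Cauchy--Schwarz step. A secondary technical point is isolating any punctual (Artin--Schreier) component of $\what{K_p}$ from the Jordan--H\"older decomposition and treating it separately by~\refs{oscillate}.
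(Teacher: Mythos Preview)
Your overall architecture --- spectral reduction, Fourier expansion of $g(\gamma_t\tau)$, completion of the interval sum, and application of the main theorem to the shifted weights $n\mapsto \hat K_p(n-h)$ --- is exactly the paper's. The difference, and the genuine gap, is in Paragraph~4.

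The combining step over $h$ does not require Cauchy--Schwarz at all, and your proposed ``judicious dyadic decomposition in $h$ interpolating between the pointwise bound and Plancherel'' is neither specified nor needed (a naive Cauchy--Schwarz there in fact loses too much). The paper simply pairs $L^1$ with $L^\infty$, using precisely the bound you already invoked in Paragraph~2: since $\sum_{h\neq 0}\bigl|\sum_{t\in I_p}e(th/p)\bigr|\ll p\log p$, one gets
\[
|S|\ \ll\ p^{-1/2}\Bigl(|I_p|+p\log p\Bigr)\cdot\max_h|T(h)|\ \ll\ \Bigl(\frac{|I_p|}{p^{1/2}}+p^{1/2}\log p\Bigr)\cdot p^{1/2-\delta/2},
\]
and dividing by $|I_p|$ finishes immediately. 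Your attribution of the halving $1/8\rightsquigarrow 1/16$ to ``the square root in the Cauchy--Schwarz step'' is therefore wrong: in the paper the factor two is lost in the \emph{dyadic decomposition in the $n$-variable}, because the archimedean weight $x^{-1/2}W_{it_f}(4\pi\Im(\tau)x)e(\pm x\Re(\tau))$ does not directly satisfy Condition~$(V(P,Q))$; one must interpolate the trivial bound (for small $P$) against Theorem~\ref{th-traceweight} (for $P$ of size one), which costs $\delta\mapsto\delta/2$.

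Two smaller points. First, since $K_p$ is \emph{isotypic}, its Fourier transform is again the trace function of an isotypic Fourier sheaf (Lemma~\ref{lm-weights}(3)), and so are all its additive translates, with the same conductor (Proposition~\ref{pr-shifted-2}); there is no ``punctual Artin--Schreier component'' to peel off. Second, for the Eisenstein contribution the paper uses the packet bound (Proposition~\ref{packet bound}) and also has to dispose of the constant term separately; you should flag that step rather than treat cusp forms and Eisenstein series uniformly.
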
    

Here is a simple application where we twist the Hecke orbit by putting
a multiplicity on the $\gamma_t$ corresponding to the value of a
polynomial function on $\Ff_p$.

\begin{corollary}[Polynomially-twisted Hecke orbits]
  \label{cor-dist-supermorse}
  Let $\phi\in\Zz[X]$ be an arbitrary non-constant polynomial.  For
  any $\tau\in Y_0(N)$ and any interval of length $|I_p|\geq
  p^{1-\delta}$ for some fixed $\delta<\tfrac{1}{8}$, the sequence of
  measures
\begin{equation}\label{eq-short-supermorse}
\frac{1}{|I_p|}\sum_{\stacksum{x\in \Ff_p}{\phi(x)\in
    I_p}}\delta_{\Gamma_0(N)\gamma_{\phi(x)}\cdot \tau}
\end{equation}
converge to the hyperbolic probability measure $\mu$ on $Y_0(N)$ as
$p\ra +\infty$.
\end{corollary}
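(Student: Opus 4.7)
My plan is to reduce the corollary to Theorem \ref{weightedshorthorocycles} applied to $K_p(t) := |\{x \in \Ff_p : \phi(x) = t\}| - 1$, combined with the classical equidistribution of short horocycle segments for the constant part. I would first test weak-$*$ convergence of the measure $\nu_p$ in \eqref{eq-short-supermorse} against an arbitrary continuous, compactly supported function $g$ on $Y_0(N)$; re-indexing the sum by $t = \phi(x)$ gives
$$
\int g\, d\nu_p \;=\; \frac{1}{|I_p|}\sum_{t\in I_p} N_\phi(t)\, g(\Gamma_0(N)\gamma_t\cdot\tau),
$$
where $N_\phi(t) = |\{x \in \Ff_p : \phi(x) = t\}|$. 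Splitting $N_\phi(t) = 1 + K_p(t)$ decomposes the right-hand side into a main term $M_p$ (from the $1$) and an oscillatory remainder $E_p$ (from $K_p$).

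Since $\gamma_t\cdot\tau = (\tau+t)/p$, the main term $M_p$ is a Riemann-sum discretisation of $g$ along a horocycle segment at height $\Im(\tau)/p$ and Euclidean length $|I_p|/p \geq p^{-\delta}$. In this range (any $\delta < 1/2$ suffices), equidistribution of such segments to $\mu$ is classical: it follows from spectral expansion in $\mcL^2(N)$ together with standard bounds on Hecke eigenvalues and Fourier coefficients of Maass forms. Hence $M_p \to \int g\, d\mu$.

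For the oscillatory term, I would recognise $K_p(t)$ as the Frobenius trace function (up to middle extension across the critical values of $\phi$) of the $\ell$-adic sheaf
$$
\mcF_\phi \;:=\; (\phi_*\bar{\Qq}_\ell)/\bar{\Qq}_\ell
$$
on $\Aa^1_{\Ff_p}$, of generic rank $\deg(\phi) - 1$. Its conductor is bounded uniformly in $p$ in terms of $\deg(\phi)$ alone (by direct estimates on rank, number of singularities and Swan conductors of $\phi_*\bar{\Qq}_\ell$). Passing to the semisimplification of $\mcF_\phi$ and grouping its irreducible constituents into isotypic components expresses $K_p$ as a sum, of length bounded in terms of $\deg(\phi)$, of isotypic trace weights of bounded conductor, plus at most a bounded number of additive characters $e(a_j t/p)$ arising from possible Artin--Schreier constituents. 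Applying Theorem \ref{weightedshorthorocycles} to each isotypic piece, and an analogous additively twisted short-horocycle argument for the Artin--Schreier summands, would yield $E_p \to 0$ whenever $\delta < 1/16$.

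The main technical obstacle is the sheaf-theoretic input for the oscillatory term: controlling the conductor and producing the isotypic decomposition of $\mcF_\phi$ uniformly in $p$, and handling cleanly any Artin--Schreier summands whose trace functions are additive characters and therefore do not fall within the isotypic trace-weight framework of Theorem \ref{weightedshorthorocycles}. The latter point is addressed by a companion short-horocycle equidistribution statement for additive characters proved by the same method. The numerical restriction $\delta < 1/16$ is inherited directly from Theorem \ref{weightedshorthorocycles}.
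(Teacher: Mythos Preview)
Your approach is correct and essentially matches the paper's: both split $N_\phi(t)=1+K(t)$, handle the constant part via equidistribution of short horocycle segments, recognize $K(t)$ as the trace function of the fiber-counting sheaf $\tilde{\sheaf{F}}=\ker(\phi_*\bar{\Qq}_\ell\to\bar{\Qq}_\ell)$, and reduce to Theorem~\ref{weightedshorthorocycles} after decomposing into isotypic pieces via Proposition~\ref{pr-shifted}. The paper phrases the last step slightly differently---it decomposes $\hat K$ rather than $K$ and feeds the pieces back into the \emph{proof} of Theorem~\ref{weightedshorthorocycles}---but this is equivalent by linearity and the fact that the Fourier transform preserves the isotypic property.

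One simplification you can make: your worry about Artin--Schreier constituents is unnecessary here. For $p>\deg(\phi)$ the sheaf $\phi_*\bar{\Qq}_\ell$ (and hence $\tilde{\sheaf{F}}$) is everywhere \emph{tame}, while any non-trivial $\sheaf{L}_{\psi(aX)}$ is wild at $\infty$; so no additive-character summands can occur in the semisimplification, and the ``companion'' argument you allude to is not needed.
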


For $\phi$ non-constant, the set $A_p=\{\phi(t)\,\mid\,
t\in\Ff_p\}\subset \Ff_p$ of values of $\phi$ has positive density in
$\Ff_p$ for $p$ large, but the limsup of the density $|A_p|/p$ is
usually strictly less than $1$. The statement means, for instance,
that the points of the Hecke orbit of $\tau$ parameterized by $A_p$
can not be made to almost all lie in some fixed ``half'' of $Y_0(N)$,
when $\phi$ is fixed.
\par
These result could also be interpreted in terms of equidistribution of
weighted $p$-adic horocycles; similar questions have been studied in
different contexts for rather different weights in \cite{Strom,Ven,US}
(e.g., for short segments of horocycles).  Also, as pointed out by
P. Sarnak, the result admits an elementary interpretation in terms of
representations of $p$ by the quaternary quadratic form
$\det(a,b,c,d)=ad-bc$ (equivalently in terms of of integral matrices
of determinant $p$). Let
$$
M^{(p)}_{2}(\Zz)=\Bigl\{\gamma=\begin{pmatrix}a & b \\c &
  d\end{pmatrix}\in M_2(\Zz)\,\mid\, ad-bc=p\Bigr\}.
$$ 
\par
It is well-known that the non-trivial bound \refs{nontrivialhecke}
implies the equidistribution of $p^{-1/2}M^{(p)}_2(\Zz)$ on the
hyperboloid
$$
M^{(1)}_{2}(\Rr)=\Bigl\{\begin{pmatrix}x & y \\z & t\end{pmatrix}\in
M_2(\Rr)\,\mid\, xt-yz=1\Bigr\}=\SL_2(\Rr)
$$
with respect to the Haar measure on $\SL_2(\Rr)$ (see
\cite{SarnakKyoto} for much more general statements). Now, any matrix
$\gamma\in M^{(p)}_{2}(\Zz)$ defines a non-zero singular matrix modulo
$p$ and determines a point $z({\gamma})$ in $\Pp^1(\Fp)$, which is
defined as the kernel of this matrix (e.g. $z({\gamma_t})=-t$. By
duality, our results imply the following refinement: for any
non-constant polynomial $\phi\in\Zz[X]$, the subsets
$$
{M}_2^{(p),\phi}(\Zz)=\{\gamma\in M^{(p)}_2(\Zz)\,\mid\,
z({\gamma})\in \phi(\Fp)\},
$$ 
are still equidistributed as $p\ra\infty$ (compare
with~\cite[Cor. 1.4]{US}). 

\subsection{Trace functions over the primes} 

In the paper~\cite{FKM2}, we build on our results and on further
ingredients to prove the following statement:

\begin{theorem} 
  Let $K$ be an isotypic trace function modulo $p$, associated to a
  sheaf $\mcF$ with conductor $\leq M$, and such that $\mcF$ is not
  geometrically isomorphic to a direct sum of copies of a tensor
  product $\sheaf{L}_{\chi(X)}\otimes\sheaf{L}_{\psi(X)}$ for some
  multiplicative character $\chi$ and additive character $\psi$. Then
  for any $X\geq 1$, we have
$$
\sum_\stacksum{q\ \mathrm{prime}}{q \leq X}K(q)\ll
X(1+p/X)^{1/12}p^{-\eta},
$$
and
$$
\sum_{n \leq X}\mu(n)K(n)\ll
X(1+p/X)^{1/12}p^{-\eta}
$$
for any $\eta<1/48$. The implicit constants depend only on $\eta$ and
$M$. Moreover, the dependency $M$ is at most polynomial.
\end{theorem}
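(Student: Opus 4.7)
The overall plan is to attack the sums $\sum_{q\leq X}K(q)$ and $\sum_{n\leq X}\mu(n)K(n)$ by a standard combinatorial identity of Vaughan or Heath-Brown type, reducing both to a family of Type I (linear) and Type II (bilinear) sums of the form
$$
\mathcal{S}_I(M,N)=\sum_{m\sim M}a_m\sum_{n\sim N}K(mn),\qquad
\mathcal{S}_{II}(M,N)=\sum_{m\sim M}\sum_{n\sim N}a_mb_nK(mn),
$$
with $MN\asymp X$, with coefficients $a_m$, $b_n$ bounded by divisor functions, and with $M$ restricted to ranges dictated by the identity. The exponents $1/12$ in the hybrid factor $(1+p/X)^{1/12}$ and $1/48$ in the savings $p^{-\eta}$ should come out after optimizing over the break-points of this decomposition, using the separate strength of the Type I and Type II estimates described below.

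For the Type I sums (small $M$, so $N$ large compared to $p^{1/2}$) I would complete the inner sum using Poisson summation modulo $p$. This rewrites $\sum_{n\leq Y}K(mn)$ as a main term $(Y/p)\sum_{n\bmod p}K(n)$ plus a contribution controlled by $\sum_{h\neq 0}|\hat K(hm)|/|h|$, which in turn is bounded (after a dyadic decomposition in $h$) by the uniform estimate $\|\hat K\|_\infty\ll \cond(K)$ coming from the Deligne–Katz bounds for the Fourier sheaf $\mcG$. This gives square-root cancellation in the incomplete sums as soon as $N\gg p^{1/2+\epsilon}$, at the expense of polynomial factors in $M=\cond(\mcF)$.

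For the Type II (bilinear) sums I would apply Cauchy–Schwarz in the $m$ variable, open the square, swap summations, and arrive at an inner correlation
$$
\sum_{m\sim M}K(mn_1)\overline{K(mn_2)}
$$
for pairs $(n_1,n_2)$. Completing this sum modulo $p$ reduces it to a correlation sum $\wwd(K;\gamma_{n_1,n_2})$ where the matrix $\gamma_{n_1,n_2}$ corresponds to multiplication by $n_1\bar n_2$. By the main results of the present paper (Theorems \ref{th-from-admissible-to-orthogonality} and \ref{th-interpret-admissible}), such correlation sums exhibit square-root cancellation unless $\gamma_{n_1,n_2}$ lies in a small subgroup, namely the multiplicative analogue of the Fourier–Möbius group $\haut_{\sheaf F}$. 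The non-degeneracy hypothesis that $\sheaf F$ is not geometrically isomorphic to $\sheaf{L}_{\chi(X)}\otimes\sheaf{L}_{\psi(X)}$ is designed precisely to force this multiplicative stabiliser to be finite (and in fact trivial up to conductor-bounded exceptions), so that only a small density of diagonal and near-diagonal $(n_1,n_2)$ contribute the trivial bound $O(M)$, while the rest contribute $O(M^{1/2}p^{1/2+o(1)})$.

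The main obstacle, and the step that requires real work beyond the present paper, is the analysis of this multiplicative Fourier–Möbius group: one must show that for an isotypic Fourier trace sheaf $\sheaf F$ of bounded conductor, the set of $a\in\Fpt$ such that $[\times a]^*\sheaf G$ is geometrically isomorphic to $\sheaf G$ is finite of bounded size, unless $\sheaf F$ is geometrically a sum of Kummer–Artin–Schreier sheaves $\sheaf{L}_{\chi}\otimes\sheaf{L}_{\psi}$. This classification is a Kummer-side counterpart of Definition \ref{def-fm} and requires identifying, via the Riemann Hypothesis, the sheaves invariant under multiplicative translation; the excluded eigensheaves of the $\times a$ action are exactly the Kummer–Artin–Schreier tensors. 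Once this geometric input is available, the combination with the Cauchy–Schwarz and completion arguments above, followed by optimisation of the Vaughan/Heath-Brown break-points against the hybrid range $X$ versus $p$, yields the stated exponents, with polynomial dependence on $M$ inherited from Theorem \ref{th-interpret-admissible}.
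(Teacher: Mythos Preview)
This theorem is not actually proved in the present paper: it appears in Section~2.3 only as an announcement of the main result of the companion paper~\cite{FKM2}, so there is no proof here to compare against directly. That said, your overall plan (Heath--Brown/Vaughan decomposition, Poisson completion for Type~I, Cauchy--Schwarz plus the Riemann Hypothesis for Type~II) is indeed the architecture of~\cite{FKM2}, and your identification of the key geometric obstruction --- classifying sheaves with large multiplicative stabiliser, which forces the $\sheaf{L}_{\chi}\otimes\sheaf{L}_{\psi}$ exclusion --- is correct.

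One point of confusion worth flagging: you write that the completed Type~II correlations ``reduce to a correlation sum $\wwd(K;\gamma_{n_1,n_2})$'' and then invoke Theorems~\ref{th-from-admissible-to-orthogonality} and~\ref{th-interpret-admissible}. But the sums $\wwd(K;\gamma)$ of this paper are built from $\hat K$ under the M\"obius action of $\PGL_2$, whereas after Cauchy--Schwarz and completion your Type~II inner sum is
\[
\sum_{u\in\Fp}K(un_1)\overline{K(un_2)}\,e\Bigl(\frac{hu}{p}\Bigr),
\]
a correlation of $K$ itself under \emph{multiplicative} shift (possibly twisted by an additive character). The relevant geometric question is therefore whether $[\times a]^*\sheaf{F}\otimes\sheaf{L}_{\psi(bX)}$ is geometrically isomorphic to $\sheaf{F}$, not whether $\gamma^*\sheaf{G}\simeq\sheaf{G}$. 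You half-recognise this when you speak of a ``Kummer-side counterpart'', but the citation of the $\PGL_2$ theorems is misleading: those results are used in~\cite{FKM2} in a different part of the argument, and the bilinear estimate needs its own cohomological input (established there) rather than a direct appeal to the Fourier--M\"obius group of the present paper.
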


These bounds are non-trivial as long as $X\geq p^{3/4+\eps}$ for some
$\eps>0$, and for $X\geq p$, we save a factor $\gg_\eps p^{1/48-\eps}$
over the trivial bound. In other terms, trace functions of bounded
conductor do not correlate with the primes or the M\"obius function
when $X$ is greater than $X\geq p^{3/4+\eps}$.
\par 
This theorem itself has many applications when specialized to various
functions. We refer to \cite{FKM2} for these.

\section{Preliminaries concerning automorphic forms}
\label{sec-automorphic}

\subsection{Review of Kuznetsov formula}\label{automorphicforms}

We review here the formula of Kuznetsov which expresses averages of
products of Fourier coefficients of modular forms in terms of sums of
Kloosterman sums. The version we will use here is taken mostly
from~\cite{BHM}, though we use a slightly different normalization of
the Fourier coefficients.

\subsubsection{Hecke eigenbases}

Let $q\geq 1$ be an integer, $k\geq 2$ an even integer. We denote by
$\mcS_{k}(q)$, $\mcL^2(q)$ and $\mcL^{2}_{0}(q)\subset \mcL^2(q)$,
respectively, the Hilbert spaces of holomorphic cusp forms of weight
$k$, of Maass forms and of Maass cusp forms of weight $k=0$, level
$q$ and trivial Nebentypus (which we denote $\chi_0$), with respect to
the Petersson norm defined by
\begin{equation}\label{eq-petersson}
  \|g\|^2_{q}=\int_{\Gamma_0(q)\bash\Hh}|g(z)|^2y^{k_g}\frac{dxdy}{y^2},
\end{equation}
where $k_g$ is the weight for $g$ holomorphic and $k_g=0$ if $g$ is a
Maass form.
\par
These spaces are endowed with the action of the (commutative) algebra
$\Tt$ generated by the Hecke operators $\{T_{n}\mid n\geq
1\}$, where
\begin{equation}\label{eq-hecke-operator}
T_ng(z)=\frac{1}{\sqrt{n}}\sum_{\stacksum{ad=n}{(a,q)=1}}{
\Bigl(\frac{a}{d}\Bigr)^{k_g/2}
\sum_{0\leq b<d}g\Bigl(\frac{az+b}{d}\Bigr)
},
\end{equation}
where $k_g=0$ if $g\in \mcL^2(q)$ and $k_g=k$ if $g\in \mcS_k(q)$
(compare with the geometric operator $\tilde{T}_p$ of
Section~\ref{ssec-orbits}).
\par
Moreover, the operators $\{T_{n}\mid (n,q)=1\}$ are self-adjoint, and
generate a subalgebra denoted $\Tt^{(q)}$.  Therefore, the spaces
$\mcS_{k}(q)$ and $\mcL^2_{0}(q)$ have an orthonormal basis made of
eigenforms of $\Tt^{(q)}$ and such a basis can be chosen to contain
all $L^2$-normalized Hecke newforms (in the sense of Atkin--Lehner
theory). We denote such bases by $\mcB_{k}(q)$ and $\mcB(q)$,
respectively, and in the remainder of this paper, we tacitly assume
that any basis we select satisfies these properties.
\par
The orthogonal complement to $\mcL^2_{0}(q)$ in $\mcL^2(q)$ is spanned
by the Eisenstein spectrum $\mcE(q)$ and the one-dimensional space of
constant functions. The space $\mcE(q)$ is continuously spanned by a
``basis'' of Eisenstein series indexed by some finite set which is
usually taken to be the set $\{\mfa\}$ of cusps of $\Gamma_{0}(q)$.
It will be useful for us to employ another basis of Eisenstein series
formed of Hecke eigenforms: the adelic reformulation of the theory of
modular forms provides a natural spectral expansion of the Eisenstein
spectrum in which the Eisenstein series are indexed by a set of
parameters of the form
\begin{equation}\label{Eisparameters}
  \{(\chi,g)\,\mid\,  g\in\mcB(\chi)\},
\end{equation} where
$\chi$ ranges over the characters of
modulus $q$ and $\mcB(\chi)$ is some finite (possibly empty) 
set depending on $\chi$ (specifically, $\mcB(\chi)$
corresponds to an orthonormal basis in the space of the principal series
representation induced from the pair $(\chi,\ov\chi)$, 
but we need not be more precise). 
\par
With this choice, the spectral expansion for $\psi\in \mcE(q)$ can be
written
$$
\psi(z)=\sumsum_\stacksum{\chi}{g\in \mcB(\chi)} \int_{\Rr}
\peter{\psi,E_{\chi,g}(t)} E_{\chi,g}(t)\frac{dt}{4\pi}
$$
where the Eisenstein series $E_{\chi,g}(t)$ is itself a function from
$\Hh$ to $\Cc$. When needed, we denote its value at $z\in\Hh$ by
$E_{\chi,g}(z,t)$. 
\par
The main advantage of these Eisenstein series is the fact that they
are Hecke eigenforms for $\Tt^{(q)}$: for $(n,q)=1$, one has
$$
T_{n}E_{\chi,g}(t)=\lambda_{\chi}(n,t)E_{\chi,g}(t)
$$
with
$$
\lambda_{\chi}(n,t)=\sum_{ab=n}\chi(a)\ov{\chi(b)}
\Bigl(\frac{a}{b}\Bigr)^{it}.
$$

\subsubsection{Multiplicative and boundedness properties of Hecke
  eigenvalues}

Let $f$ be any Hecke eigenform of $\Tt^{(q)}$, and let
$\lambda_{f}(n)$ denote the corresponding eigenvalue for $T_{n}$,
which is real. Then for $(mn,q)=1$, we have
\begin{equation}\label{eq31}
  \lambda_f(m)\lambda_f(n) = \sum_{d \mid (m, n)} 
  \lambda_f(mn/d^2).
\end{equation}
\par
This formula \eqref{eq31} is valid for all $m$, $n$ if $f$ is an
eigenform for all of $\Tt$, with an additional multiplicative factor
$\chi_0(d)$ in the sum.

We recall some bounds satisfied by the Hecke eigenvalues. First, if
$f$ belongs to $\mcB_{k}(q)$ (i.e., is holomorphic) or is an
Eisenstein series $E_{\chi,f}(t)$, then we have the
Ramanujan-Petersson bound
\begin{equation}\label{eq-deligne-bound}
|\lambda_{f}(n)|\leq d(n)\ll_{\eps} n^\eps
\end{equation}
for any $\eps>0$. For $f\in\mcB(q)$, this is not known, but we will be
able to work with suitable average versions, precisely with the second
and fourth-power averages of Fourier coefficients. First, we have
\begin{equation}\label{secondpowerbound}
  \sum_{n \leq x} \abs{\lambda_f(n)}^2\ll x(q(1+|t_f|))^{\eps}, 
\end{equation}
uniformly in $f$, for any $x \geq 1$ and any $\eps>0$, where the
implied constant depends only on $\eps$
(see~\cite[Prop. 19.6]{DFIa}). Secondly, we have
\begin{equation}\label{fourthpowerbound}
  \sum_{\stacksum{n \leq x}{n\text{ squarefree}}} 
  \abs{\lambda_f(n)}^4 \ll_{f} x(\log x)
\end{equation}
for any $x \geq 1$ (see,
e.g.,~\cite[(3.3), (3.4)]{krw}).

\subsubsection{Hecke eigenvalues and Fourier coefficients}

For $z=x+iy\in\Hh$, we write the Fourier expansion of a modular form
$f$ as follows:
\begin{gather}
  f(z)=\sum_{n\geq 1} \rho_{f}(n)n^{(k-1)/2}e(nz)\quad\text{for}\quad
  f\in\mcB_{k}(q), \nonumber
  \\
  f(z)=\sum_{n \neq 0} \rho_f(n)|n|^{-1/2} W_{it_f}(4 \pi
  \abs{n}y)e(nx)\quad\text{for}\quad f\in\mcB(q),
\label{eq-fourier-expansion}
\intertext{where $1/4+t_f^2$ is the Laplace eigenvalue, and}
E_{\chi,g}(z,t)=c_{1,g}(t)y^{1/2+it}+c_{2,g}(t)y^{1/2-it} +\sum_{n
  \neq 0} \rho_{g}(n,t)|n|^{-1/2} W_{it}(4 \pi \abs{n}y)e(nx),
\nonumber
\end{gather}
where
\begin{equation}
\label{Wit}
W_{it}(y)=
\frac{e^{-y/2}}{\Gamma(it+\frac12)}
\int_0^\infty e^{-x}x^{it-1/2}\Bigl(1+\frac{x}y\Bigr)^{it-1/2}dx
\end{equation}
is a Whittaker function (precisely, it is denoted $W_{0,it_f}$
in~\cite[\S 4]{DFIa}; see also~\cite[9.222.2,9.235.2]{gr}.)
\par
When $f$ is a Hecke eigenform, there is a close relationship between
the Fourier coefficients of $f$ and its Hecke eigenvalues
$\lambda_{f}(n)$: for $(m,q)=1$ and any $n\geq 1$, we have
\begin{equation}\label{eig}
  \lambda_{f}(m)\rho_f(n)  
  = \sum_{d\mid(m,n)}\rho_{f}\left(\frac{mn}{d^2}\right),
\end{equation}
and moreover, these relations hold for all $m$, $n$ if $f$ is a
newform, with an additional factor $\chi_0(d)$.
\par
In particular, for $(m,q)=1$, we have
\begin{equation}\label{eig2}
  \lambda_{f}(m)\rho_f(1)=\rho_{f}({m}).
\end{equation} 

\subsubsection{The Petersson formula}

For $k\geq 2$ an even integer, the Petersson trace formula expresses
the average of product of Fourier coefficients over $\mcB_k(q)$ in
terms of sums of Kloosterman sums (see, e.g.~\cite[Theorem~9.6]{IwI}
and \cite[Proposition~14.5]{ant}): we have
\begin{equation}\label{pet}
  \frac{(k-2)!}{(4\pi)^{k-1}}
  \sum_{f\in\mcB_{k}(q)}\rho_{f}(n)\ov{\rho_{f}(m)}=
  \delta(m,n) + \Delta_{q,k}(m,n),
\end{equation}
with
\begin{equation}
\label{DeltaHdef}
\Delta_{q,k}(m,n)=2 \pi i^{-k} \sum_{q\mid c} \frac{1}{c} S(m, n;
c)J_{k-1}\left(\frac{4\pi \sqrt{mn}}{c}\right).
\end{equation}

\subsubsection{The Kuznetsov formula}

Let $\phi : [0, \infty[ \rightarrow \Cc$ be a smooth function
satisfying 
$$
  \phi(0) = \phi'(0) = 0,\quad\quad
  \phi^{(j)}(x) \ll_\eps (1+x)^{-2-\eps}\quad \text{ for } 0 \leq j
  \leq 3.
$$
\par
Let
\begin{equation}\label{besseltrans}
\begin{split}
  \dot{\phi}(k) &= i^k\int_0^{\infty} J_{k-1}(x) \phi(x) \frac{dx}{x},\\
  \tilde{\phi}(t) & = \frac{i}{2\sinh(\pi t)}\int_0^{\infty}
  \left(J_{2it}(x) - J_{-2it}(x)\right)\phi(x)\frac{dx}{x}, \\
  \check{\phi}(t) & = \frac{2}{\pi}\cosh(\pi
  t)\int_0^{\infty}K_{2it}(x)\phi(x)\frac{dx}{x}
\end{split}
\end{equation}
be Bessel transforms. Then for positive integers $m$, $n$ we have the
following trace formula due to Kuznetsov:
\begin{equation}
\label{Kuz}
\Delta_{q,\phi}(m,n)=\sum_{q \mid c} \frac{1}{c}
S(m, n;c)\phi\left(\frac{4\pi\sqrt{mn}}{c}\right)
\end{equation}
with
\begin{multline}\label{Deltadef} 
  \Delta_{q,\phi}(m,n) = \sumsum_\stacksum{k \equiv 0 \mods{2},\
    k>0}{g\in\mcB_k(q)} \dot{\phi}(k)\frac{(k-1)!}{\pi(4\pi)^{k-1}}
  \rho_{g}(m)\ov{\rho_{g}(n)} + \sum_{g\in\mcB(q)}
  \tilde{\phi}(t_g)\frac{4 \pi }{\cosh(\pi
    t_g)}\rho_{g}(m)\ov{\rho_g(n)}\\
  + \,\sumsum_\stacksum{\chi}{g\in
    \mcB(\chi)}\int_{-\infty}^{\infty}\tilde{\phi}(t)\frac{1}{\cosh(\pi
    t)} \rho_g\left(m,t\right)\ov{\rho_g\left(n, t\right)}\,dt.
\end{multline}
  
\subsection{Choice of the test function} \label{test choice}

For the proof of Theorem~\ref{th-from-admissible-to-orthogonality}, we
will need a function $\phi$ in Kuznetsov formula such that the
transforms $\dot{\phi}(k)$ and $\tilde\phi(t)$ are non-negative for
$k\in 2\Nn_{>0}$ and $t\in\Rr\cup(-i/4,i/4)$. Such $\phi$ is obtained
as a linear combination of the following explicit functions. For
 $2 \leq b < a$ two odd integers, we
take
\begin{equation}\label{defphi}
  \phi_{a, b}(x) = i^{b-a} J_a(x) x^{-b}.
\end{equation}
\par
By \cite[(2.21)]{BHM} we have
\begin{equation}\label{phitrafo}
\begin{split}
  &\dot{\phi}_{a, b}(k) = \frac{b!}{2^{b+1}\pi} \prod_{j=0}^b
  \left\{ 
    \left(\frac{a+b}{2}-j\right)^2-\left(\frac{k-1}{2}\right)^2\right\}^{-1}
  \asymp_{a, b}\, \pm \,k^{-2b-2},\\
  &\tilde{\phi}_{a, b}(t) = \frac{b!}{2^{b+1} \pi} \prod_{j=0}^{b}
  \left\{t^2 + \left(\frac{a+b}{2}-j\right)^2\right\}^{-1} \asymp_{a,
    b}\, (1+|t|)^{ - 2b-2}.
\end{split}
\end{equation}
\par
In particular,
\begin{equation}\label{posdot}
\begin{cases}
  \dot{\phi}_{a, b}(k) > 0 \quad &\text{for}\quad 2\leq k \leq a-b,\\
  (-1)^{(k-(a-b))/2}\dot{\phi}_{a, b}(k) > 0 \quad &\text{for}\quad a-b<k\leq a+b\\
  \dot{\phi}_{a, b}(k) > 0 \quad &\text{for}\quad a+b< k\hbox{ (since $b+1$ is even)},\\
  \tilde{\phi}_{a, b}(t)> 0\quad &\text{for}\quad
  t\in\Rr\cup(-i/4,i/4).
\end{cases}
\end{equation}
\par
Notice that if we have the freedom to choose $a$ and $b$ very large,
we can ensure that the Bessel transforms of $\phi_{a,b}$ decay faster
than the inverse of any fixed polynomial at infinity.

\section{The amplification method}\label{sec-proof}

\subsection{Strategy of the amplification}

We prove Theorem~\ref{th-from-admissible-to-orthogonality} using the
\emph{amplification method}; precisely we will embed $f$ in the space
of forms of level $pN$ (a technique used very successfully by Iwaniec
in various contexts \cite{Iw,CI}), as well as by
others~\cite{byk},~\cite{blomer-harcos}. The specific implementation
of amplification (involving the full spectrum, even for a holomorphic
form $f$) is based on~\cite{BHM}.
\par
We consider an automorphic form $f$ of level $N$, which is
either a Maass form with Laplace eigenvalue $1/4+t_f^2$, or a
holomorphic modular form of even weight $k_f\geq 2$, and which is an
eigenform of all Hecke operators $T_n$ with $(n,pN)=1$. 
\par
By viewing $f$ as being of level $2$ or $3$ if $N=1$, we can assume
that $N\geq 2$, which will turn out to be convenient at some point of
the later analysis.  We will also assume that $f$ is $L^2$-normalized
with respect to the Petersson inner product~(\ref{eq-petersson}).
\par
Finally, we can also assume that $p>N$, hence $p$ is coprime with
$N$. We will also assume that $p$ is sufficiently large with respect
to $f$ and $\eps$.  
\par
The form $f$ is evidently a cusp form with respect to the smaller
congruence subgroup $\Gamma_0(pN)$ and the function
\begin{equation}\label{eq-change-level}
\frac{f(z)}{[\Gamma_0(N):\Gamma_0(pN)]^{1/2}}=\frac{f(z)}{(p+1)^{1/2}}
\end{equation}
may therefore be embedded in a suitable orthonormal basis of modular
cusp forms of level $q=pN$, either $\mcB(q)$ or $\mcB_{k_f}(q)$.
\par
Let $a>b\geq 2$ be odd integers, to be chosen later (both will be
taken to be large), let $\phi=\phi_{a,b}$ be the
function~(\ref{defphi}) defined in section \ref{test choice}. We
define ``amplified'' second moments of the sums $\tsum(g,K;p)$, where
$g$ runs over suitable bases of $\mcB(q)$ and
$\mcB_{k_f}(q)$. Precisely, given $L\geq 1$ and any coefficients $(b_{\ell})$ defined
for $\ell\leq 2L$ and supported on $\ell\sim L$, and any modular form
$h$, we define an amplifier $B(h)$ by
$$
B(h)=\sum_{\ell\leq 2L}{b_{\ell}\lambda_h(\ell)}= \sum_{\ell\sim
  L}{b_{\ell}\lambda_h(\ell)}.
$$
\par
We will also use the notation 
\begin{equation}\label{eq-bgt}
  B(g,t)=B(E_{g,\chi}(t))
\end{equation}
for $\chi$ a Dirichlet character modulo $N$ and $g\in\mcB(\chi)$. 
\par
We then let
\begin{multline}\label{momentdef} 
  M(L)= \sum_{k \equiv 0 \mods{2},\ k>0} \dot{\phi}(k)(k-1)M(L;k)
  \\
  + \sum_{g\in\mcB(q)} \tilde{\phi}(t_g)\frac{4 \pi }{\cosh(\pi
    t_g)}|B(g)|^2|\tsum_V(g,K,p)|^2\\
  + \,\sumsum_\stacksum{\chi}{g\in
    \mcB(\chi)}\int_{-\infty}^{\infty}\tilde{\phi}(t)\frac{1}{\cosh(\pi
    t)} |B(g,t)|^2|\tsum_V(E_{\chi,g}(t),K,p)|^2\,dt,
\end{multline}
where
\begin{equation}\label{momentholodef} 
  M(L;k)=\frac{(k-2)!}{\pi(4\pi)^{k-1}}
  \sum_{g\in\mcB_k(q)}
  |B(g)|^2|\tsum_V(g,K,p)|^2,
\end{equation}
for any even integer $k\geq 2$.
\par
We will show:

\begin{proposition}[Bounds for the amplified
  moment]\label{pr-amplified}
  Assume that $M\geq 1$ is such that $K$ is $(p,M)$-good.  Let $V$ be
  a smooth compactly supported function satisfying Condition
  $(V(C,P,Q))$. Let $(b_{\ell})$ be arbitrary complex numbers supported
  on primes $\ell\sim L$, such that $|b_{\ell}|\leq 2$ for all $\ell$.
\par
For any $\eps>0$ there exist $k(\eps)\geq 2$, such that for any $k\geq
k(\eps)$ and any integers $a>b>2$ satisfying
$$
a-b\geq k(\eps),\quad a\equiv b\equiv 1\mods{2},
$$
we have
\begin{equation}\label{eq-amplified}
  M(L),\ M(L;k)\ll \{p^{1+\eps}LP(P+Q)
  +p^{1/2+\eps}L^3PQ^2(P+Q)\}M^{3}
\end{equation}
provided that
\begin{equation}\label{eq-cond-1st}
  p^{\eps}LQ<p^{1/4}.
\end{equation}
\par
The implied constants depend on $(C,\eps,a,b,k,f)$.
\end{proposition}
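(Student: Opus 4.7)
The plan is to apply the Kuznetsov formula \refs{Kuz} (and Petersson \refs{pet} for the holomorphic moment $M(L;k)$) to collapse the spectral average over $g$, and then to exploit the modular interpretation of the arithmetic side through the correlation sums $\wwd(K;\gamma)$ of \refs{eq-correlation}. First I would expand the squares in \refs{momentdef}, interchange summations, and apply the Hecke relations \refs{eig} (valid for $(\ell_i,pN)=1$, hence for primes $\ell_i\sim L$ whenever $L<p$, which follows from \refs{eq-cond-1st}) to convert $\lambda_g(\ell_i)\rho_g(\cdot)$ into shifted Fourier coefficients. The positivity of the Bessel transforms \refs{posdot} --- the very reason for the choice of $\phi=\phi_{a,b}$ from Section~\ref{test choice} --- ensures that after dropping the positive spectral weights we can still control $M(L)$ and $M(L;k)$ by the full $\Delta_{q,\phi}(m',n')$ appearing on the right of \refs{Kuz}. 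The diagonal $\delta(m',n')$ contribution is then a purely arithmetic sum which, after summing over the amplifier, gives a term of size $\ll M^2\,p^{1+\eps}LPQ$, accounting for the first piece of \refs{eq-amplified}.

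The off-diagonal contribution involves Kloosterman sums $S(\ell_1 m/d_1^2,\ell_2 n/d_2^2;pNc)$ for $c\geq 1$. Using twisted multiplicativity to separate the modulus $p$ from $Nc$, opening the $p$-part via inversion, applying Poisson summation in $m$ and $n$ modulo $p$, and using the $p$-periodicity of $K$ together with Plancherel, the inner double sum $\sum_{m,n}K(m)\overline{K(n)}V(m/p)V(n/p)(\,\cdots)$ becomes a weighted sum over dual frequencies $h$, $k$ of the values $\hat K(h)\overline{\hat K(k)}$. After simplification one arrives at an expression of the shape
\[
\sum_{\ell_1,\ell_2,c,h,k}\alpha(\ell_1,\ell_2,c,h,k)\,\wwd(K;\gamma(\ell_1,\ell_2,c,h,k)),
\]
where $\gamma=\gamma(\ell_1,\ell_2,c,h,k)\in\PGL_2(\Ff_p)$ is an explicit matrix whose entries are the reductions mod $p$ of rational functions in the amplifier parameters, and $\alpha$ is a smooth weight controlled via the Bessel transform decay \refs{phitrafo} and repeated integration by parts in $m,n$ using the $Q$-derivative bounds from Condition $(V(P,Q))$. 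These manipulations localize the effective range of $(h,k,c)$ (with negligible tails under \refs{eq-cond-1st}).

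The final step exploits the $(p,M)$-goodness. For $\gamma\notin\hautk{K}{M}$ one uses $|\wwd(K;\gamma)|\leq Mp^{1/2}$; together with a trivial count of amplifier tuples this yields a contribution $\ll M^3 p^{1/2+\eps}L^3 PQ(P+Q)^2$, matching the second term of \refs{eq-amplified}. For $\gamma\in\hautk{K}{M}$ we must fall back on the Parseval-type bound \refs{eq-bound-wwd}, so the needed savings have to come from the smallness of the preimage. The main obstacle of the proof is therefore a diophantine counting problem: bounding the number of amplifier tuples $(\ell_1,\ell_2,c,h,k)$ whose associated matrix $\gamma$ lies in each of the four strata of \refs{eq-def-admi}. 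The Borel stratum $\hauti{b}{K}{M}$ and the parabolic stratum $\hauti{p}{K}{M}$ impose algebraic equations on the integer parameters (triangularity, vanishing of a discriminant) handled by elementary counts; the torus and normalizer strata $\hauti{t}{K}{M}$, $\hauti{w}{K}{M}$ are cut out by at most $O(M)$ integral quadratic equations (stabilizer/swapper equations for the pairs $(x_i,y_i)\in\Pp^1(\bar\Ff_p)^2$), whose solution counts can be bounded by $M^{O(1)}$ times a small power of $L$. Under the ranges \refs{eq-cond-1st} these counts stay within the claimed bound, which proves \refs{eq-amplified}.
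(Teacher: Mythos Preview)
Your outline is correct and follows the paper's approach (Sections~\ref{sec-estim-amplified}--\ref{ssec-conclusion}): expand the squares, apply the Hecke relations and Kuznetsov/Petersson, split the Kloosterman modulus by twisted multiplicativity, Poisson-transform modulo $p$ to reach correlation sums $\wwd(K;\gamma)$ with explicit resonating matrices, then separate correlating from non-correlating $\gamma$ and control the former via the diophantine counting afforded by the $(p,M)$-good structure. One correction: positivity of the Bessel transforms plays no role in the proof of Proposition~\ref{pr-amplified} itself---Kuznetsov is applied as an exact identity, and the paper also isolates the $\ell_1=\ell_2$ contribution beforehand and bounds it by the large sieve (Lemma~\ref{lm-md})---positivity is only invoked afterward, in Section~4.2, to extract the single form $f$ from the spectral average $M(L)$.
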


We will prove Proposition~\ref{pr-amplified} in
Sections~\ref{sec-estim-amplified} and~\ref{ssec-conclusion}, but
first we show how to exploit it to prove the main result.

From now on, we omit the fixed test-function $V$ and use the
simplified notation $\tsum_V(f,K;p)=\tsum(f,K;p)$. Also (and because
we will need the letter $C$ for another variable), we fix the sequence
$C=(C_\nu)_{\nu}$ and we will not mention the dependency in $C$ in our
estimates.

\subsection{From Proposition~\ref{pr-amplified} to
  Theorem~\ref{th-from-admissible-to-orthogonality}}

We assume here Proposition~\ref{pr-amplified} and proceed to the proof
of the main theorem. 
\par
The amplifier we use is due to Venkatesh. We put
\begin{equation}
\label{bldef}
b_{\ell}=
\begin{cases}
  \mathrm{sign}(\lambda_f(\ell)) &\text{ if } \ell\nmid pN
  \text{ is a prime $\ell\sim L$ and $\lambda_f(\ell)\not=0$},
  \\
  0&\text{otherwise.}
\end{cases}
\end{equation} 
(note the use of Hecke eigenvalues, and not Fourier coefficients,
here).
\par
With this choice, the pointwise bound $|b_{\ell}|\leq 1$ is obvious,
and on average we get
\begin{equation*}\label{eq-bound1}
  \sum_{\ell\sim L}{|b_{\ell}|}\leq \pi(2L)\leq 2L.
\end{equation*} 
\par
Moreover, for $L$ large enough in terms of $f$ and $L<p$, we have
\begin{equation}\label{eq-big-amplifier}
B(f)\gg \frac{L}{(\log L)^2}
\end{equation}
where the implied constant depends on $f$. Indeed, we have
$$
B(f)= \sum_{\stacksum{\ell\sim L}{\ell\nmid N}}|\lambda_f(\ell)|,
$$
which we bound from below by writing
$$
\frac{L}{\log L} \ll \sum_{\stacksum{\ell\sim L}{\ell\nmid
    N}}|\lambda_f(\ell)|^2 \ll \frac{L}{(\log L)^3}
+|\mathcal{L}|^{1/2} \Bigl( \sum_{\ell\sim
  L}{|\lambda_f(\ell)|^4}\Bigr)^{1/2}
$$
(using the Cauchy-Schwarz inequality and the Prime Number Theorem for
the Rankin-Selberg $L$-function $L(f\otimes f,s)$) where
$$
\mathcal{L}=\{\ell\sim L\,\mid\, \ell\nmid N,\ |\lambda_f(\ell)|>(\log
L)^{-1}\}.
$$
\par
Thus by~(\ref{fourthpowerbound}), we have
\begin{equation}\label{pluto}
B(f) \geq \frac{|\mathcal{L}|}{\log L}\gg_f \frac{L}{(\log L)^2}.
\end{equation}

Now we apply Proposition~\ref{pr-amplified} for this choice. We recall
from \refs{posdot} that we have
$$
\tilde\phi(t),\ \tilde\phi(t_g)>0,
$$
in the second and third terms of the sum defining $M(L)$, while for
$k\geq 2$, even, we have
$$
 \dot\phi(k)>0\text{ for $k\leq a-b$ or $k>a+b$}
$$
under our conditions on $a$ and $b$.
\par
Given $\eps>0$, we can choose $a$, $b$ large enough, both odd,
depending on $\eps$, so that $a-b\geq k(\eps)$, and we add a finite number of terms to $M(L)$ to form
$$
M(L)+2\sum_\stacksum{a-b<k\leq a+b}{\dot\phi(k)<0}|\dot\phi(k)|(k-1)M(L;k)
$$
which equals
\begin{multline}\label{momentdef2} 
\sum_{k \equiv 0
    \mods{2},\ k>0} |\dot{\phi}(k)|(k-1)M(L;k) + \sum_{g\in\mcB(q)}
  \tilde{\phi}(t_g)\frac{4 \pi }{\cosh(\pi
    t_g)}|B(g)|^2|\tsum(g,K,p)|^2\\
  + \,\sumsum_\stacksum{\chi}{g\in
    \mcB(\chi)}\int_{-\infty}^{\infty}\tilde{\phi}(t)\frac{1}{\cosh(\pi
    t)} |B(g,t)|^2|\tsum(E_{\chi,g}(t),K,p)|^2\,dt\\
  \ll \{p^{1+\eps}LP(P+Q)+p^{1/2+\eps}L^3PQ^2(P+Q)\}M^3,
\end{multline}
where the implied constant depends on $(f,\eps)$.
\par
Now all the terms of the right-hand side of the
equality~(\ref{momentdef2}) are non-negative. Applying positivity and
recalling~(\ref{eq-change-level}), we obtain
$$
(p+1)^{-1}|B(f)|^2|\tsum(f,K;p)|^2\ll
\{p^{1+\eps}LP(P+Q)+p^{1/2+\eps}L^3PQ^2(P+Q)\}M^3
$$
and hence
\begin{equation}\label{eq-final-bound}
  |\tsum(f,K;p)|^2\ll \left\{
p^{2+\eps}\frac{P(P+Q)}{L}+p^{3/2+\eps}LPQ^2(P+Q)
\right\}M^3(\log L)^{6}
\end{equation}
by~(\ref{eq-big-amplifier}), where the implied constant depends on
$(f,\eps)$.
\par
We let
\begin{equation}\label{eq-large-L}
L=\frac{1}{2}p^{1/4-\eps}Q^{-1},
\end{equation}
for arbitrarily small $\eps>0$ so that \refs{eq-cond-1st} is satisfied. Therefore, if
 $L$ is sufficiently large depending on $f$, we obtain
\begin{equation}\label{eq-final-tsum}
  \tsum(f,K;p)\ll M^{3/2}p^{7/8+\eps}(PQ)^{1/2}(P+Q)^{1/2}.
\end{equation}
On the other hand, if $L\ll_f 1$, we have $Q\gg_f \frac{1}2p^{1/4-\eps}$, and the
estimate~(\ref{eq-final-tsum}) is trivial.  Thus we obtain
Theorem~\ref{th-from-admissible-to-orthogonality}.

\begin{remark}
  In~\cite[p. 1707]{FKM2}, we quote a slighlty different choice of
  $L$. This was due to a minor slip in the proof
  of~(\ref{eq-final-bound}) in the first draft of this paper, which is
  corrected above.  Using the value~(\ref{eq-large-L}) in~\cite{FKM2}
  does not affect any of the main results of that paper.
\end{remark}
\par


\subsection{Packets of Eisenstein series} \label{sec-packet}

The above argument also yields a similar bound for packets of unitary
Eisenstein series, i.e., when $f$ is replaced by
$$
E_{\chi,g,\varphi}=\int_\Rr \varphi(t)E_{\chi,g}(t)dt
$$
where $\chi$ is a Dirichlet character of modulus $N$, $g\in\mcB(\chi)$
and $\varphi$ is some smooth compactly supported function. We have the
following:

\begin{proposition}[Twisted sums of Eisenstein packets]\label{packet
    bound}
  Let $p$ be a prime number and $M\geq 1$. Let $K$ be a $(p,M)$-good
  function, and $V$ a function satisfying $(V(C,P,Q))$.
\par
There exists an absolute constant $s\geq 1$ such that
$$
\tsum_V(E_{\chi,g,\varphi},K;p)\ll
M^sp^{1-\delta}(PQ)^{1/2}(P+Q)^{1/2}
$$
for any $\delta<1/8$, where the implied constant depends only on
$(N,\delta,\varphi)$.
\end{proposition}

\begin{proof}
Let $T\geq 0$ be such that the support of $\varphi$ is contained in
$[-T,T]$. Then we have
$$
|\tsum_{V}(E_{\chi,g,\varphi},K;p)|\leq
\int_{\Rr}{
|\tsum_V(E_{\chi,g}(t),K;p)\varphi(t)|dt
}
\leq \|\varphi\|_{\infty}\int_{-T}^T{
|\tsum_V(E_{\chi,g}(t),K;p)|dt
},
$$
and we will bound the right-hand side.
\par
Fix some $t_0\in[-T,T]$. For $t\in [-T,T]$ we let $B(g,t)$ denote the
amplifier~(\ref{eq-bgt}) for the coefficients
$$
b_{\ell}=
\begin{cases}
  \overline{\lambda_{\chi}(\ell,t_0)} &\text{ if $\ell\sim L$ is prime
    and coprime to $pN$},
  \\
  0&\text{ otherwise},
\end{cases}
$$
which satisfy $|b_{\ell}|\leq 2$, where we recall that
$$
\lambda_{\chi}(n,t_0)=\sum_{ab=n}\chi\Bigl(\frac{a}b\Bigr)
\Bigl(\frac{a}b\Bigr)^{it_0}
$$
gives the Hecke eigenvalues of $E_{\chi,g}(t_0)$. 
\par
Let $\alpha_p=\exp(-\sqrt{\log p})$. For $t$ such that $|t-t_0|\leq
\alpha_p$, and for $\ell$ prime with $\ell\sim L$, we have
$$
\ell^{\pm it}=\ell^{\pm i t_0}+O(\alpha_p^{1/2}),
$$
from which we deduce
$$
\lambda_{\chi}(\ell,t)=\lambda_{\chi}(\ell,t_0)+O(\alpha_p^{1/2}),
$$
and then
\begin{equation}\label{winnie}
B(g,t) =B(g,t_0) +O(L\alpha_p^{1/2}).
\end{equation}
\par
Our next task it to give an analogue of \eqref{eq-big-amplifier},
namely we prove lower-bound
\begin{equation}\label{donald}
B(g,t_0)\gg_{N,T} \frac{L}{\log^6 L}, 
\end{equation} 
for $L\geq L_0 (N,T)$, uniformy for $\vert t_0 \vert \leq T$. 
The
argument is similar to~\cite[Lemma 2.4]{FKM2}. We start from the
equality
$$
B(g,t_0)=\sum_{\ell \sim L}\, \bigl\vert \chi(\ell) \ell ^{it_0}
+\overline{\chi}(\ell) \ell^{-it_0}\bigr\vert\geq \frac12 \sum_{\ell \sim L}\, \bigl\vert \chi(\ell) \ell ^{it_0}
+\overline{\chi}(\ell) \ell^{-it_0}\bigr\vert^2.
$$
Restricting the summation to the primes $\ell\equiv 1 \bmod N$, we
obtain the lower bound
\begin{equation}\label{dark}
  B(g,t_0)\geq 2 \sum_{\substack{\ell \sim L \\ \ell\equiv 1 \bmod N}}
  \cos^2 (t_0 \log \ell).
\end{equation}
In~\cite[p. 1705]{FKM2}, the corresponding sum without the condition
$\ell\equiv 1\bmod N$ is shown to be $\gg L/(\log L)^6$. Since $N$ is
fixed, it is easy to include this condition in the proof of loc. cit.,
using the Prime Number Theorem in arithmetic progressions.  We leave
the details to the reader.
\par
Combining \eqref{winnie} and \eqref{donald}, we deduce
\begin{equation}\label{mickey}
B(g,t) \gg \frac{L}{\log^6 L},
\end{equation}
where the implied constant depends only on $N$ and $T$.  We therefore
get
\begin{align*}
  \frac{L^2}{(\log L)^{12}} \int_{|t-t_0|\leq \alpha_p}
  |\tsum(E_{\chi,g}(t),K;p)|^2dt&\ll \int_{|t-t_0|\leq\alpha_p}
  |B(g,t)|^2|\tsum(E_{\chi,g}(t),K;p)|^2dt,
\end{align*}
and the same argument used in the previous section leads to
$$
\int_{|t-t_0|\leq \alpha_p}|\tsum(E_{\chi,g}(t),K;p)|dt \ll
M^{3/2}p^{1-\delta}(PQ)^{1/2}(P+Q)^{1/2},
$$
for any $\delta<1/8$, the implied constant depending on
$(T,M,\delta)$. Finally we get
\begin{align*}
\int_{-T}^T|\tsum_V(E_{\chi,g}(t),K;p)|dt &\ll
M^{3/2}\alpha_p^{-1}p^{1-\delta}(PQ)^{1/2}(P+Q)^{1/2})
\\&\ll
M^{3/2}p^{1-\delta'}(PQ)^{1/2}(P+Q)^{1/2}
\end{align*}
for any $\delta'<\delta<1/8$, the implied constant depending on
$(\delta,T)$, by partitioning the interval $[-T,T]$ into roughly
$\alpha_p^{-1}=\exp(\sqrt{\log p})$ intervals of length $\alpha_p$.
\end{proof}

\begin{remark} The bounds \eqref{pluto} and \eqref{mickey} exhibit a polynomial
dependency in the parameters of $f$ or $E_{\chi,g,\varphi}$. This is due to the direct use of the prime number theorem for various $L$-functions. However, with more sophisticated Hoheisel-type estimates (see
  \cite{Moto} for instance), this dependency can be made
  polynomial. This is important for instance to obtain polynomial
  decay rates in $p$ in Theorem \ref{weightedshorthorocycles}.
\end{remark}

\begin{remark} Using the non-obvious amplifier of \cite{dfi2}
$$
b_{\ell}=
\begin{cases}
  \overline{\lambda_{f}(\ell)} &\text{ if $\ell\sim L$ is prime
    and coprime to $pN$},
  \\
    -1 &\text{ if $\ell=(\ell')^2$ for $\ell'\sim L$ a prime
     coprime to $pN$},
  \\
  0&\text{ otherwise},
\end{cases}
$$
and the identity $|\lambda_{f}(\ell)|^2-\lambda_{f}(\ell^2)=1$ for
$\ell$ prime it is possible to obtain a non trivial bound for the sum
$\tsum_V(f,K;p)$ when $f$ is of level $Np$ (rather than $N$); however
due to the lacunarity of the amplifier the resulting bounds are
weaker: the exponent $1/8$ in Theorem \ref{th-traceweight} and its
corollaries has to be replaced by $1/16$. The proof is a little bit
more involved as one has to consider more than 3 cases in \S
\ref{ssec-m3l} and we will not give it here.
\end{remark}

\section{Estimation of the amplified second moment}
\label{sec-estim-amplified}

We begin here the proof of Proposition~\ref{pr-amplified}. Obviously,
we can assume that $P\leq p$, $Q\leq p$.
\par
We start by expanding the squares in $B(g)$ and $|\tsum(g,K;p)|^2$,
getting
$$
M(L;k)=\frac{(k-2)!}{\pi(4\pi)^{k-1}}
\sum_{\ell_1,\ell_2}b_{\ell_1}\overline{b_{\ell_2}}
\sum_{n_1,n_2}K(n_1)\overline{K(n_2)}
V\Big(\frac{n_1}{p}\Bigr)V\Bigl(\frac{n_2}{p}\Bigr)
\sum_{g\in\mcB_k(q)} \lambda_g(\ell_1)\lambda_g(\ell_2)\rho_g(n_1)
\overline{\rho_g(n_2)}
$$ 
and similarly
\begin{multline*}
  M(L)=\sum_{\ell_1,\ell_2}b_{\ell_1}\overline{b_{\ell_2}}
  \sum_{n_1,n_2}K(n_1)\overline{K(n_2)}V\Bigl(\frac{n_1}{p}\Bigr)
  V\Bigl(\frac{n_2}{p}\Bigr) \\
  \times \Bigl\{ \sumsum_\stacksum{k \equiv 0\mods{2},\
    k>0}{g\in\mcB_k(q)}
  \dot{\phi}(k)\frac{(k-1)!}{\pi(4\pi)^{k-1}}\lambda_g(\ell_1)
  \lambda_g(\ell_2)\rho_g(n_1) \overline{\rho_g(n_2)}
  \\
  +\sum_{g\in\mcB(q)} \tilde{\phi}(t_g)\frac{4 \pi }{\cosh(\pi
    t_g)}\lambda_g(\ell_1)\lambda_g(\ell_2)\rho_g(n_1) \overline{\rho_g(n_2)}\\
  +\,\sumsum_\stacksum{\chi}{g\in
    \mcB(\chi)}\int_{-\infty}^{\infty}\tilde{\phi}(t)\frac{1}{\cosh(\pi
    t)} \lambda_{\chi}(\ell_1,t)\lambda_{\chi}(\ell_2,t)\rho_g(n_1,t)
  \overline{\rho_g(n_2,t)}\,dt\Bigr\}
\end{multline*}
where we used the fact that the Hecke eigenvalues $\lambda_g(\ell_2)$
and $\lambda_{\chi}(\ell_2,t)$ which are involved are real for
$\ell_2$ coprime to $pN$, because of the absence of Nebentypus.

\subsection{First decomposition}

We decompose these two moments as
$$
M(L)=M_d(L)+M_{nd}(L),\quad M(L;k)=M_{d}(L;k)+M_{nd}(L;k)
$$
depending on whether $\ell_1=\ell_2$ or $\ell_1\not=\ell_2$.

We begin with the ``diagonal'' terms $M_d(L),\ M_d(L;k)$ where
$\ell_1=\ell_2$, which are the only cases where $\ell_1$ and $\ell_2$
are not coprime.

\begin{lemma}\label{lm-md}
  Assume that $|K|\leq M$. For any $\eps>0$, we have
$$
M_d(L;k),\ M_d(L)\ll M^2p^{1+\eps}LP(P+1),
$$
where the implied constants depend only on $\eps$.
\end{lemma}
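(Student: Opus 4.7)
The plan is to treat $M_d(L;k)$ explicitly via the Petersson formula~(\ref{pet}); the quantity $M_d(L)$ is handled by the same argument applied term by term to the spectral decomposition furnished by Kuznetsov's formula~(\ref{Kuz}), with the rapid decay of the Bessel transforms from~(\ref{phitrafo}) controlling convergence over the spectral parameters, and the Eisenstein contribution treated via the explicit formula expressing $\rho_g(n,t)$ through divisor-like sums. The key simplification throughout is that since $b_\ell$ is supported on primes $\ell \sim L$ coprime to $q = pN$, the Hecke relation~(\ref{eig}) applies cleanly: $\lambda_g(\ell)^2 \rho_g(n_1)\overline{\rho_g(n_2)}$ expands as a sum of at most four products $\rho_g(m_1)\overline{\rho_g(m_2)}$ with $m_i \in \{\ell n_i,\, n_i/\ell\}$ when divisibility permits.

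I would apply Petersson's formula to each of these products, splitting the inner sum over $g \in \mcB_k(q)$ into a diagonal part coming from the Kronecker delta $\delta(m_1,m_2)$ and an off-diagonal part involving the Kloosterman sum expression~(\ref{DeltaHdef}) for $\Delta_{q,k}(m_1,m_2)$. For the diagonal contribution, the constraint $m_1 = m_2$ pins down $n_1$ and $n_2$ up to $O(1)$ relations (the dominant one being $n_1 = n_2$), so after bounding trivially by $\|K\|_\infty^2 \leq M^2$, $|b_\ell| \leq 2$, and using the support condition $n_i \in [pP, 2pP]$, this contribution is at most $M^2 L(pP+1) \ll M^2 p^{1+\eps} LP$.

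For the off-diagonal contribution, I would substitute the explicit Kloosterman sum expression, exchange summations, and estimate via Weil's bound $|S(m_1,m_2;c)| \ll c^{1/2+\eps}(m_1,m_2,c)^{1/2}$ combined with the standard estimate $|J_{k-1}(x)| \ll \min(1, x^{k-1})$ for the Bessel function. Since $q = pN \mid c$ forces $c \geq p$, while $m_i \ll \ell n_i \ll LpP$, the Bessel argument is $\ll LP$, so one obtains extra savings when $LP$ is small and convergence of the $c$-sum from the modulus being large. A careful but routine bookkeeping of divisor sums and gcd factors yields an off-diagonal contribution bounded by $M^2 p^{1+\eps} L P^2$. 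Adding the diagonal and off-diagonal bounds produces the stated $M^2 p^{1+\eps} L P(P+1)$.

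The main obstacle is the off-diagonal analysis: one must track the ranges of $c$ with $pN \mid c$, the $(m_1,m_2,c)$ factor, and the regime change of the Bessel estimate, while ensuring the constants are uniform in $k$ (so that the same estimate applies inside the weighted $k$-sum defining $M_d(L)$). A secondary subtlety arises for $M_d(L)$ in the continuous Eisenstein integral, where one uses the factorization $\rho_g(n,t) = \rho_g(1,t)\lambda_\chi(n,t)$ together with the bound for $\cosh(\pi t)^{-1}|\tilde\phi(t)|$ to reduce to the same Hecke-eigenvalue bookkeeping, after which the Petersson-style treatment applies verbatim.
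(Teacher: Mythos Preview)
Your approach diverges from the paper's and contains a genuine gap in the off-diagonal estimate. The paper does not open the Petersson/Kuznetsov formula here at all: it first bounds $\sum_{\ell\sim L}|b_\ell|^2|\lambda_g(\ell)|^2\ll L(p(1+|t_g|))^\eps$ uniformly in $g$ via~\eqref{secondpowerbound} (or Deligne's bound in the holomorphic and Eisenstein cases), thereby decoupling the amplifier from the $n$-sum, and then applies the spectral large sieve of Deshouillers--Iwaniec~\cite[Th.~2]{DI} as a black box to
\[
\sum_{g}\frac{\tilde\phi(t_g)}{\cosh(\pi t_g)}\Bigl|\sum_{n}K(n)V(n/p)\rho_g(n)\Bigr|^2
\ll \Bigl(1+\frac{pP}{q}\Bigr)^{1+\eps}\sum_{n}|K(n)V(n/p)|^2\ll M^2 p^{\eps}(1+P)\cdot pP,
\]
and similarly for the holomorphic and Eisenstein pieces. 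Multiplying by the $L$ from the $\ell$-sum gives the lemma.

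Your route instead expands $\lambda_g(\ell)^2\rho_g(n_1)\overline{\rho_g(n_2)}$ via Hecke and applies Petersson to each $\sum_g\rho_g(m_1)\overline{\rho_g(m_2)}$. The diagonal part is fine, but the claim that the off-diagonal $\Delta_{q,k}(m_1,m_2)$ is controlled by ``Weil's bound combined with $|J_{k-1}(x)|\ll\min(1,x^{k-1})$'' and ``routine bookkeeping'' is where the argument breaks. With $m_i\asymp \ell n_i\asymp LpP$ and $c\geq q\asymp p$, the Bessel argument is $\asymp LP$, and a direct computation using $|S(m_1,m_2;c)|\ll c^{1/2+\eps}(m_1,m_2,c)^{1/2}$ together with $|J_{k-1}(x)|\ll x^{-1/2}$ in the transition range yields
\[
\sum_{\ell\sim L}\sum_{n_1,n_2\sim pP}|\Delta_{q,k}(\ell n_1,\ell n_2)|
\ \gg\ L^2 p^{3/2+\eps} P^{5/2},
\]
which exceeds the target $p^{1+\eps}LP^2$ by a factor $\asymp Lp^{1/2}P^{1/2}$. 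The missing ingredient is precisely the extra cancellation in the $(n_1,n_2)$-sum that the Deshouillers--Iwaniec proof extracts (by Poisson/duality after opening the Kloosterman sum); without it, taking absolute values of $K(n_1)\overline{K(n_2)}$ is fatal. In short, what you are calling routine bookkeeping is the content of the large sieve inequality itself, and the paper simply quotes it.
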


\begin{proof} 
  Consider $M_d(L)$: it decomposes as a sum of the holomorphic, Maass
  and Eisenstein contributions
$$
M_d(L)=M_{d,Hol}(L)+M_{d,Maa}(L)+M_{d,Eis}(L)
$$
where, for instance, we have
$$
M_{d,Maa}(L)=\sum_{g\in\mcB(q)} \tilde{\phi}(t_g)\frac{4 \pi
}{\cosh(\pi t_g)}\sum_{\ell\leq
    L}{|b_{\ell}|^2|\lambda_g(\ell)|^2} \Bigl|
  \sum_{n}K(n)\rho_g(n)V\Bigl(\frac{n}p\Bigr) \Bigr|^2.
$$
\par
By \refs{secondpowerbound} and the bound $|b_{\ell}|\leq 2$, we get
$$
\sum_{\ell\sim L}{|b_{\ell}|^2|\lambda_g(\ell)|^2} \leq
4 \sum_{\ell\sim
  L}{|\lambda_g(\ell)|^2}  \ll_{\eps} (p(1+|t_g|))^\eps L,
$$
where the implied constant is independent of $f$. We can then apply
the rapid decay~(\ref{phitrafo}) of $\tilde{\phi}(t)$ at infinity and
the large sieve inequality of Deshouillers--Iwaniec~\cite[Theorem~2,
(1.29)]{DI} to obtain
\begin{multline*}
M_{d,Maa}(L)\ll_{\eps} p^\eps L\sum_{g\in\mcB(q)}
\tilde{\phi}(t_g)\frac{(1+|t_g|)^\eps }{\cosh(\pi t_g)} \Bigl|
\sum_{n}K(n)\rho_g(n)V\Bigl(\frac{n}p\Bigr) \Bigr|^2\\
\ll
p^{\eps}L\Bigl(1+\frac{pP}{pN}\Bigr)M^2(pP)\ll p^{1+\eps}LPM^2(P+1)
\end{multline*}

where the implied constant depends only on $\eps$.
\par
The bounds for the holomorphic and Eisenstein portion are similar and
in fact slightly simpler as we can use Deligne's bound on Hecke
eigenvalues of holomorphic cusp form (or unitary Eisenstein series)
instead of \refs{secondpowerbound} (still using~\cite[Th. 2, (1.28),
(1.30)]{DI}). And the treatment of $M_d(L;k)$ is essentially included
in that of the holomorphic contribution.
\end{proof}

\subsection{The contribution of $\ell_1\not=\ell_2$}

The modular forms appearing in $M_{nd}(L)$ or $M_{nd}(L;k)$ are
Hecke-eigenforms for the Hecke operators $T(n)$ for $(n,q)=(n,pN)=1$,
hence we can combine the eigenvalues at the primes $\ell_1\not=\ell_2$
using the Hecke relation \refs{eig} and
$$
\lambda_g(\ell_1)\lambda_g(\ell_2)= \lambda_g(\ell_1\ell_2),
$$
obtaining
$$
 \lambda_g(\ell_1\ell_2)\rho_g(n_1)=
\sum_{d\mid (\ell_1\ell_2,n_1)}
{\rho_g\Bigl(\frac{\ell_1\ell_2 n_1}{d^2}\Bigr)}.
$$
\par
By the Petersson formula \refs{pet}, we write
$$
\pi M_{nd}(L;k)=M_1(L;k)+M_2(L;k)
$$
where $M_1(L;k)$ corresponds to the diagonal terms
$\delta(\ell_1\ell_2n_1d^{-2},n_2)$ while
$$ 
M_2(L;k)=\sum_{\ell_1\not=\ell_2}b_{\ell_1}\overline{b_{\ell_2}}
\sum_{{d\mid \ell_1\ell_2}}
\sum_{\stacksum{n_1,n_2}{d\mid n_1}}{
  K(n_1)\overline{K(n_2)}V\Bigl(\frac{n_1}p\Bigr)
  V\Bigl(\frac{n_2}p\Bigr)
  \Delta_{q,k}\Bigl(\frac{\ell_1\ell_2n_1}{d^2},n_2\Bigr)}
$$
where $\Delta_{q,k}$ is given in \refs{DeltaHdef}.
\par
On the other hand, by \refs{Kuz}, there is no diagonal contribution
for $M_{nd}(L)$, and we write
$$
M_2(L)=M_{nd}(L)=\sum_{\ell_1\not=\ell_2}b_{\ell_1}\overline{b_{\ell_2}}
\sum_{d\mid \ell_1\ell_2}
\sum_{\stacksum{n_1,n_2}{d\mid n_1}}{
  K(n_1)\overline{K(n_2)}V\Bigl(\frac{n_1}p\Bigr)V\Bigl(\frac{n_2}p\Bigr)
  \Delta_{q,\phi}\Bigl(\frac{\ell_1\ell_2n_1}{d^2},n_2\Bigr)},
$$
where $\Delta_{q,\phi}(m,n)$ is defined in \refs{Deltadef}. 

\begin{remark}
  One can obtain a ``trivial'' bound for $M_2(L)$ and $M_2(L;k)$ by
  applying the Cauchy-Schwarz inequality and again the large sieve
  inequalities of Deshouillers--Iwaniec~\cite[Theorem~2]{DI}, namely
\begin{align}
  M_2(L), k^{-1}M_2(L;k)&\ll_\eps p^{1+\eps}((P+1)L)^\eps
  LP(P+1)^{1/2}(L^2P+1)^{1/2}\nonumber
  \\
  &\ll pL^2(P+1)M^2\label{largesievebound}
\end{align}
where the implied constant depends on $(C,\eps,a,b)$.
\end{remark}

\subsection{Diagonal terms}\label{ssec-m1l}

We begin with $M_1(L;k)$: we have
\begin{align*}
  M_1(L;k)&=\sum_{\ell_1\not=\ell_2}b_{\ell_1}\overline{b_{\ell_2}}
  \sum_{{d\mid \ell_1\ell_2}}
  \sum_{\stacksum{n_1,n_2\geq 1}{d\mid n_1}}{
    K(n_1)\overline{K(n_2)}V\Bigl(\frac{n_1}p\Bigr)V\Bigl(\frac{n_2}p\Bigr)
    \delta\Bigl(\frac{\ell_1\ell_2n_1}{d^2},n_2\Bigr)}\\
  &=\sum_{\ell_1\not=\ell_2}b_{\ell_1}\overline{b_{\ell_2}}
  \sum_{{de=\ell_1\ell_2}} \sum_{\stacksum{n_1\geq
      1}{d\mid n_1}}
  K(n_1)\overline{K(en_1d^{-1})}V\Bigl(\frac{n_1}p\Bigr)
  V\Bigl(\frac{en_1/d}p\Bigr)\\
  &=\sum_{\ell_1\not=\ell_2}b_{\ell_1}\overline{b_{\ell_2}}
  \sum_{{de=\ell_1\ell_2}} \sum_{m\geq 1}
  K(dm)\overline{K(em)}V\Bigl(\frac{dm}p\Bigr)V\Bigl(\frac{em}p\Bigr).
\end{align*}
\par
Since $V$ has compact support in $[P,2P]$ the sum over $m$ is in fact of length $\ll \min(pP/d,pP/e)$. But since $de=\ell_1\ell_2$ with
$\ell_i\sim L$, we have
$$
\max(d,e)> L.
$$
\par
Thus, simply using the bound $|K(n)|\leq M$ and the boundedness of
$b_{\ell}$, we get:

\begin{lemma}\label{lm-complete}
  Let $K(n)$ be such that $|K|\leq M$ for some $M\geq 1$. 
  Then we have
$$
M_1(L;k)\ll pLPM^2.
$$
\end{lemma}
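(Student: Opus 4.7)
The plan is to bound $M_1(L;k)$ directly from the closed form just derived above the lemma statement, using only elementary counting — no analytic input beyond the support condition on $V$. The argument will proceed in three short steps.

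First, I would use the compact support of $V$ in $[P,2P]$: the product $V(dm/p)V(em/p)$ restricts $m$ to the intersection of the intervals $[pP/d, 2pP/d]$ and $[pP/e, 2pP/e]$, which has length at most $\min(pP/d, pP/e) = pP/\max(d,e)$.

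Second, I would exploit that $\ell_1 \neq \ell_2$ are distinct primes with $\ell_i \sim L$: for any factorization $de = \ell_1 \ell_2$, one has $\max(d,e) \geq \sqrt{\ell_1\ell_2} > L$. Combined with $|K|\leq M$ and $|V|\leq 1$, this shows that the inner sum over $m$ contributes at most $\ll M^2\, pP/L$. The hypothesis $pP \geq L$ is exactly what ensures this length is $\geq 1$, so the estimate is meaningful in all cases.

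Finally, I would sum crudely over the outer variables: there are at most four divisors $d$ of the product of two primes $\ell_1\ell_2$, at most $(2L)^2$ pairs $(\ell_1,\ell_2)$ with $\ell_i\sim L$, and the amplifier coefficients satisfy $|b_{\ell_1}\overline{b_{\ell_2}}|\leq 4$. Multiplying these factors together yields
\[
M_1(L;k) \ll L^2 \cdot \frac{pP}{L} \cdot M^2 = pLPM^2,
\]
as claimed. I expect no serious obstacle here — the lemma is essentially a bookkeeping exercise. The one slightly delicate observation is that any nontrivial factorization $de=\ell_1\ell_2$ of a product of two \emph{distinct} primes is necessarily asymmetric, which is what forces $\max(d,e)>L$ and hence the short $m$-range; if $\ell_1=\ell_2$ were allowed one could have $d=e=L$ and the argument would lose a factor of $L$.
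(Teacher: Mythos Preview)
Your proof is correct and follows essentially the same approach as the paper: bound the $m$-sum by $pP/\max(d,e)$ using the support of $V$, observe $\max(d,e)>L$ since $de=\ell_1\ell_2$ with $\ell_i\sim L$, and sum trivially over the $O(L^2)$ pairs and $O(1)$ divisors. The paper's argument is identical in substance, stated slightly more tersely.
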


\subsection{Arranging the off-diagonal terms}

Now comes the most important case of $M_2(L)$ and $M_2(L;k)$. Their
shape is very similar, so we define
\begin{multline}\label{M2phi}
M_2[\phi]=\frac{1}{pN}\sum_{\ell_1\not=\ell_2}b_{\ell_1}\overline{b_{\ell_2}}
  \sum_{{d\mid \ell_1\ell_2}}
  \sum_{\stacksum{n_1,n_2}{d\mid n_1}}
  K(n_1)\overline{K(n_2)}V\Bigl(\frac{n_1}p\Bigr)V\Bigl(\frac{n_2}p\Bigr)
  \\
  \sum_{c\geq 1}c^{-1}S(\ell_1\ell_2n_1 d^{-2},n_2;cpN)
  \phi\left(
  \frac{4\pi}{cpN}\sqrt{\frac{\ell_1\ell_2n_1n_2}{d^2}} \right),
\end{multline}
for an arbitrary function $\phi$. We then have
$$
M_2(L)=M_2[\phi_{a,b}]
\hbox{ and }
M_2(L;k)=M_2[\phi_k]
$$
for $\phi_k=2\pi i^{-k}J_{k-1}$.
\par
We first transform these sums by writing
$$
M_2[\phi]=\sum_{\ell_1\not=\ell_2}b_{\ell_1}\overline{b_{\ell_2}}
\sum_{{de=\ell_1\ell_2}}M_2[\phi;d,e],
$$
where
$$
M_2[\phi;d,e]=\frac{1}{pN}\sum_{c\geq 1}c^{-1}
\tilde{\mathcal{E}}_{\phi}(c,d,e)
$$
and
\begin{align*}
  \tilde{\mathcal{E}}_{\phi}(c,d,e)&= \sum_{n_1}\sum_{n_2}
  S(en_1,n_2;cpN) K(dn_1)\overline{K(n_2)} \phi \Bigl(\frac{4\pi
    \sqrt{en_1n_2}}{cpN}\Bigr)V\Bigl(\frac{dn_1}p\Bigr)
  V\Bigl(\frac{n_2}p\Bigr)\\
  &= \sum_{n_1\geq 1}\sum_{n_2\geq 1} S(en_1,n_2;cpN)
  K(dn_1)\overline{K(n_2)}H_{\phi}(n_1,n_2),
\end{align*}
with
\begin{equation}\label{eq-def-hphi}
H_{\phi}(x,y)=\phi\Bigl(\frac{4\pi \sqrt{exy}}{cpN}\Bigr)
V\Bigl(\frac{dx}p\Bigr)V\Bigl(\frac{y}p\Bigr).
\end{equation}
\par
Having fixed $d$, $e$ as above, let $C=C(d,e)\geq 1/2$ be a
parameter. We decompose further
\begin{equation}\label{blue}
M_2[\phi;d,e]=M_{2,C}[\phi;d,e]+M_3[\phi;d,e]
\end{equation}
where $M_{2,C}[\phi;d,e]$ denotes the contribution of the terms with
$c>C$, and correspondingly
\begin{equation}\label{eq-split-size-c}
M_2[\phi]=M_{2,tail}[\phi]+M_3[\phi].
\end{equation}
\par
We begin by estimating those, assuming that
\begin{equation}\label{eq-bound-phi}
|\phi(x)|\leq B x^{\kappa}
\end{equation}
for some $\kappa\geq 1$, $B\geq 0$ and all $x>0$. Using the trivial bound
for Kloosterman sums and the bound $|K(n)|\leq M$, we get
\begin{align*}
  \tilde{\mathcal{E}}_{\phi}(c,d,e)&\ll M^2\sumsum_{n_1\ll pP/d,\
    n_2\ll pP}cp (en_1n_2)^{\kappa/2}(cp)^{-\kappa}
  \\
  &\ll M^2
  c^{-\kappa+1}\Bigl(\frac{e}{d}\Bigr)^{\kappa/2}p^{3}P^{2+\kappa}
\end{align*}
for all $c\geq 1$, the implied constant depending on $B$.
\par
For our specific choices of $\phi$, we note that we have the
upper-bound
\begin{equation}
\label{Jkbound}
|J_{k-1}(x)|\leq \min(1,x^{k-1})
\end{equation}
where the constant implied is absolute. Recalling the
definition~(\ref{defphi}), we obtain~(\ref{eq-bound-phi}) with
$\kappa=a-b$ for $\phi=\phi_{a,b}$ and with $\kappa=k-1$ for
$\phi=2\pi i^{-k} J_{k-1}$, and we note that in the latter case, the
constant $B$ is independent of $k$.  Then, summing over $c>C(d,e)$ 
, we obtain:

\begin{proposition}\label{pr-m2c}
  With notation as above, assuming that $|K|\leq M$, we have
\begin{gather*}
  M_{2,C}[\phi_{a,b};d,e]\ll M^2p^{2}CP^2
  \Bigl(\frac{P}{C}\sqrt{\frac{e}{d}}\Bigr)^{a-b},\\
  M_{2,C}[\phi_{k};d,e]\ll M^2p^{2}CP^2
  \Bigl(\frac{P}{C}\sqrt{\frac{e}{d}}\Bigr)^{k-1}
\end{gather*}
where the implied constant is absolute.
\end{proposition}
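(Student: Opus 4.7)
The plan is to combine the pointwise estimate
$$\tilde{\mathcal{E}}_{\phi}(c,d,e)\ll M^2 c^{1-\kappa}\Bigl(\frac{e}{d}\Bigr)^{\kappa/2}p^{3}P^{2+\kappa}$$
established in the paragraph preceding the statement (valid for $\phi=\phi_{a,b}$ with $\kappa=a-b$, and for $\phi=2\pi i^{-k}J_{k-1}$ with $\kappa=k-1$) with a summation in the variable $c$, followed by a trivial sum over divisor pairs $(d,e)$ and prime pairs $(\ell_1,\ell_2)$.

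First I would substitute this bound into
$$M_{2,C}[\phi;d,e]=\frac{1}{pN}\sum_{c>C}c^{-1}\tilde{\mathcal{E}}_{\phi}(c,d,e).$$
Under the hypotheses of Proposition~\ref{pr-amplified} (so $a-b$ large, and $k$ large enough), we have $\kappa\geq 2$, hence $\sum_{c>C}c^{-\kappa}\ll C^{1-\kappa}$, which yields
$$M_{2,C}[\phi;d,e]\ll M^2 C^{1-\kappa}\Bigl(\frac{e}{d}\Bigr)^{\kappa/2}p^{2}P^{2+\kappa}=M^2 p^{2}CP^2\Bigl(\frac{P}{C}\sqrt{\frac{e}{d}}\Bigr)^{\kappa}.$$

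Next I would plug this pointwise estimate into $M_{2,C}[\phi]=\sum_{\ell_1\neq\ell_2}b_{\ell_1}\overline{b_{\ell_2}}\sum_{de=\ell_1\ell_2,\,(d,pN)=1}M_{2,C}[\phi;d,e]$. Since $\ell_1,\ell_2$ are distinct primes, there are at most four divisor decompositions $de=\ell_1\ell_2$, and there are $\ll L^2$ pairs $(\ell_1,\ell_2)$ with $\ell_i\sim L$ and $|b_{\ell_i}|\leq 2$; the factor $L^2$ then appears explicitly in the bound of the proposition, while the dependence on $(d,e)$ is retained on the right-hand side as the worst case over the $O(1)$ divisor pairs.

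The single subtle point in this otherwise routine argument is the uniformity in $k$ for the bound on $M_{2,C}[\phi_k]$: the constant $B$ in $|\phi_k(x)|\leq Bx^{k-1}$ must be taken independent of $k$, which is exactly what the uniform bound $J_{k-1}(x)\ll\min(1,x^{k-1})$ of~\refs{Jkbound} provides. This uniformity is essential, because Proposition~\ref{pr-amplified} will weight these contributions by $\dot{\phi}_{a,b}(k)(k-1)$ and sum them over all even $k\geq 2$, relying on the rapid decay~\refs{phitrafo} of $\dot{\phi}_{a,b}(k)$ at infinity to obtain an absolutely convergent series.
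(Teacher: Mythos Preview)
Your proposal is correct and follows essentially the same route as the paper, which simply says ``summing over $c>C$ and then over $(d,e)$, $(\ell_1,\ell_2)$, we obtain'' the stated bound. You have correctly filled in the routine details of this summation, including the observation that $\kappa\geq 2$ is needed for $\sum_{c>C}c^{-\kappa}\ll C^{1-\kappa}$, and your remark on the uniformity in $k$ (coming from~\refs{Jkbound}) matches the paper's own emphasis that the implied constant $B$ is independent of $k$ in that case.
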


In view of this proposition, we choose
\begin{equation}
\label{Cchoice}
C=\max\Bigl( 1/2, p^{\delta} P\sqrt{\frac{e}{d}}\Bigr)\ll p^{\delta} LP,
\end{equation}
for some small parameter $\delta>0$ which is at our disposal. Then
taking $k=k(\delta)$ and $a=a(\delta)$, $b=b(\delta)$ so that $k$ and
$a-b$ are large enough, and summing over $\ell_1,\ell_2$ we see that the total contribution, $M_{2,tail}$,  to $M(L)$ and $M(L;k)$, of the terms
$M_{2,C}[\phi_{a,b};d,e]$ and $M_{2,C}[\phi_{k};d,e]$
is bounded by
\begin{equation}\label{est43}
  M_{2,tail}\ll p^{-10}L^2P^2M^2,
\end{equation}
so it is negligible.

\subsection{Estimating the off-diagonal terms}\label{ssec-m3l}

It remains to handle the complementary sum (see \eqref{blue}) which is
\begin{equation}\label{eq-m3}
  M_3[\phi;d,e]= \frac{1}{pN}\sum_{1\leq c\leq C}c^{-1}
  \tilde{\mathcal{E}}_{\phi}(c,d,e),
\end{equation}
where $C$ is defined by \eqref{Cchoice}. In particular, we can assume
$C\geq 1$ otherwise the above sum is zero.
\par
Recall that we factored the product of distinct primes $\ell_1\ell_2$
(with $\ell_i\sim L$) as $\ell_1 \ell_2 =de$. Hence we have three
types of factorizations of completely different nature, which we
denote as follows:
\begin{itemize}
\item Type $(L^2,1)$: this is when $d=\ell_1\ell_2$ and $e=1$, so that
  $L^2<d\leq 4L^2$;
\item Type $(1,L^2)$: this is when $d=1$ and $e=\ell_1\ell_2$, so that
  $L^2<e\leq 4L^2$;
\item Type $(L ,L)$: this is when $d$ and $e$ are both $\not=1$ (so
  $d=\ell_1$ and $e=\ell_2$ or conversely), so that $L <d\not= e\leq
  2L$.
\end{itemize}
\par
We will also work under the following (harmless)
restriction
\begin{equation}\label{p-epsilon-P}
  p^{\delta} P< L.
\end{equation}
\par
By the definitions \eqref{Cchoice} and \eqref{eq-m3}, we infer that $C<1$ hence

\begin{proposition}\label{veryeasy}
  Suppose that $(d,e)$ is of {\rm Type} $(L^2,1)$ and that
  \eqref{p-epsilon-P} is satisfied. Then we have the equality
 $$
 M_3[\phi;d,e] =0.
 $$
\end{proposition}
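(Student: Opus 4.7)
The plan is to observe that under the stated hypotheses the cutoff parameter $C$ is forced to be $1/2$, so the sum defining $M_3[\phi;d,e]$ is over an empty range of indices and therefore vanishes tautologically.

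Concretely, I would unwind the definition of $C$ from \eqref{Cchoice}. For a factorization of Type $(L^2,1)$ we have $e=1$ and $d=\ell_1\ell_2>L^2$, so $\sqrt{e/d}=1/\sqrt{d}<1/L$. Consequently
$$
p^{\delta}P\sqrt{e/d}<\frac{p^{\delta}P}{L},
$$
and under the assumption $2p^{\delta}P<L$ of \eqref{p-epsilon-P} the right-hand side is strictly less than $1/2$. Feeding this into \eqref{Cchoice} gives $C=\max(1/2,\,p^{\delta}P\sqrt{e/d})=1/2$.

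Now \eqref{eq-m3} expresses $M_3[\phi;d,e]$ as a sum over integers $c$ in the range $1\leq c\leq C=1/2$, which is empty. Hence $M_3[\phi;d,e]=0$ as claimed. There is no real obstacle here; the entire content of the proposition is the bookkeeping observation that, for factorizations where the modulus $d$ is already as large as $L^2$, the size condition \eqref{p-epsilon-P} on the amplifier length $L$ relative to the dyadic parameter $P$ forces the effective $c$-range in the Petersson/Kuznetsov expansion to be trivially empty, so this type of factorization contributes nothing to the off-diagonal sum and only the other two Types $(1,L^2)$ and $(L,L)$ need to be analyzed further.
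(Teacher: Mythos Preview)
Your proof is correct and is exactly the argument the paper intends: it says the proposition is ``immediate'' from \eqref{Cchoice} and \eqref{eq-m3}, and you have correctly unwound those definitions to show that Type $(L^2,1)$ together with \eqref{p-epsilon-P} forces $C=1/2$, making the $c$-sum empty.
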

 
It remains to deal with the two types $(L,L)$ and $(1,L^2)$. We will
transform each of the sums $\tilde{\mathcal{E}}_{\phi}(c,d,e)$ to connect them
with the correlation sums $\wwd(K;\gamma)$ for suitable matrices
$\gamma$.  First, observing that $(c,p)=1$ because $C<p$ (by
combining~(\ref{eq-cond-1st}),~(\ref{Cchoice})
and~(\ref{p-epsilon-P})), the twisted multiplicativity of Kloosterman
sums leads to
\begin{equation}\label{eq-ephi}
  \tilde{\mathcal{E}}_{\phi}(c,d,e)=\sum_{0\leq x_1<cN}\sum_{0\leq x_2<cN}
  S(ex_1\bar{p},x_2\bar{p};cN) D(c,d,e,x_1,x_2),
\end{equation}
where
$$
D(c,d,e,x_1,x_2) = \sum_{n_1\geq 0} \sum_{n_2\geq 0}
K(df_1(n_1))\overline{K(f_2(n_2))}
S(ef_1(n_1)\overline{cN},f_2(n_2)\overline{cN};p)
H_{\phi}(f_1(n_1),f_2(n_2)),
$$
with
$$
f_i(x)=x_i+cNx.
$$
\par
We split the double sum over $n_1$, $n_2$ into congruence classes
modulo $p$, and apply the Poisson summation formula and the identity
$$
\frac{\overline{h}_1}{h_2}+\frac{\overline{h}_2}{h_1} \equiv
\frac{1}{h_1h_2}\mods{1}
$$
for non-zero coprime integers $h_1$ and $h_2$. This shows
that\footnote{\ We use the same notation $n_1$, $n_2$ for the dual
  variables, but note that they now range over $\Zz$.}
\begin{align*}
  D(c,d,e,x_1,x_2)&=\dblsum_{n_1,n_2\in\Zz}\frac{1}{(cpN)^2}\widehat
  H_{\phi}\Bigl(\frac{n_1}{cpN},\frac{n_2}{cpN}\Bigr)
  e\Bigl(\frac{x_1n_1+x_2n_2}{cpN}\Bigr)\\
&\quad\quad\quad\quad\quad\quad
  \times e\Bigl(-\overline{cN}\frac{x_1n_1+x_2n_2}p\Bigr)
  E(c,d,e,x_1,x_2,n_1,n_2)\\
&=\dblsum_{n_1,n_2\in\Zz}\frac{1}{(cpN)^2}\widehat
  H_{\phi}\Bigl(\frac{n_1}{cpN},\frac{n_2}{cpN}\Bigr)
  e\Bigl(\frac{\bar{p}x_1n_1+\bar{p}x_2n_2}{cN}\Bigr)
  E(c,d,e,x_1,x_2,n_1,n_2)
\end{align*}
with $\widehat H_{\phi}(x,y)$ the Fourier transform over $\Rr^2$ of
$H_{\phi}$ and
\begin{multline}\label{eq-sum-ek}
  E(c,d,e,x_1,x_2,n_1,n_2):=e\Bigl(\overline{cN}\frac{x_1n_1+x_2n_2}p\Bigr)
  \\
  \times\sum_{u_1,u_2( p)} K(df_1(u_1))\overline{K(f_2(u_2))}
  S(ef_1(u_1)\overline{cN},f_2(u_2)\overline{cN};p)
  e\Bigl(\frac{u_1n_1+u_2n_2}p\Bigr) \\
  =\sum_{u_1,u_2( p)} K(u_1)\overline{K(u_2)}
  S(e\overline{cdN}u_1,\overline{cN}u_2;p)
  e\Bigl(\frac{\overline{cdN}u_1n_1+\overline{cN}u_2n_2}p\Bigr).
\end{multline}
\par
Note that the last expression is now independent of $(x_1,x_2)$, so
that we will be justified to denote this simply by
$E(c,d,e,n_1,n_2)$. Opening the Kloosterman
sums in~(\ref{eq-sum-ek}) and changing the order of summation, we see
that
\begin{equation}\label{eq-e-general}
  E(c,d,e,n_1,n_2)=p
  \sum_{z\in\Ff_p^\times}\hat{K}(\overline{cN}(\overline{d}ez+\overline{d}n_1))
  \overline{\hat{K}(-\overline{cN}(z^{-1}+n_2))},
\end{equation}
and by a further change of variable this becomes
\begin{equation}\label{eq-sum-as-corr}
  E(c,d,e,n_1,n_2)=p
\wwd\Bigl(K;
\begin{pmatrix}
n_1 & (n_1n_2-e)/(cN)\\
cdN & dn_2
\end{pmatrix}
\Bigr).
\end{equation}

Our next step is to implement the summation over $x_1$ and $x_2$
modulo $cN$ in~(\ref{eq-ephi}): we have
$$
\dblsum_{x_1,x_2\mods{cN}}{
S(ex_1\bar{p},x_2\bar{p};cN)
e\Bigl(\frac{\bar{p}x_1n_1+\bar{p}x_2n_2}{cN}\Bigr)
}=
\begin{cases}
  (cN)^2&\text{ if } e\equiv n_1n_2\mods{cN},\ (n_2,cN)=1,
  \\
  0&\text{ otherwise,}
\end{cases}
$$
by orthogonality of characters modulo $cN$. Observe also that, since
$N\geq 2$, the congruence condition $e\equiv n_1n_2\mods{N}$ and the
fact that $(e,N)=1$ implies that $n_1n_2\not=0$ and is coprime with
$N$.
\par 
The outcome of the above computations is, for any $c\geq 1$, the
identity
\begin{equation}\label{outcome}
  \tilde{\mathcal{E}}_{\phi}(c,d,e)=\frac{1}{p} 
  \dblsum_{\stacksum{n_1n_2\not=0,\ (n_2,cN)=1}{n_1n_2\equiv
      e\mods{cN}}} \widehat
  H_{\phi}\Bigl(\frac{n_1}{cpN},\frac{n_2}{cpN}\Bigr) 
  \wwd\Bigl(K;\gamma(c,d,e,n_1,n_2)
  \Bigr)
\end{equation}
where
\begin{equation}
\label{gammadef}
\gamma(c,d,e,n_1,n_2):=\begin{pmatrix}
n_1 & (n_1n_2-e)/(cN)\\
cdN & dn_2
\end{pmatrix}\in \mathrm{M}_2(\Zz)\cap \GL_2(\Qq).
\end{equation}
\par
We make the following definition:

\begin{definition}[Resonating matrix]\label{def-resonating} For
  $n_1n_2\equiv e\mods{cN}$, the integral matrix
  $\gamma(c,d,e,n_1,n_2)$ defined by~(\ref{gammadef}) is called a {\it
    resonating matrix.}
\end{definition}

Observe that 
$$
\det(\gamma(c,d,e,n_1,n_2))=de
$$ 
and since $de$ is coprime with $p$, the reduction of
$\gamma(c,d,e,n_1,n_2)$ modulo $p$ provides a well-defined element in
$\PGL_2(\Fp)$.

\subsection{Estimating the Fourier transform}

Our next purpose is to truncate the sum over $n_1,n_2$ in
\eqref{outcome}. To do this, we introduce a new parameter:
\begin{equation}\label{defZ}
  Z= \frac{P}{cN}\sqrt{\frac{e}{d}}
  \asymp \begin{cases}  
    \frac{P}{cN} & \text{ if } (d,e) \text{ is of } {\rm Type}\, (L,L),\\
    \\
    \frac{LP}{cN} 
    & \text{ if } (d,e) \text{ is of } {\rm Type}\, (1,L^2  ).
\end{cases}
\end{equation}
\par
Note that, since $1\leq c\leq C=p^{\delta}P(e/d)^{1/2}$, we have
\begin{equation}\label{eq-lower-bound-z}
Z\gg_N p^{-\delta}.
\end{equation}
\par
We will use $Z$ to estimate the Fourier transform $\widehat
H_{\phi}(\frac{n_1}{cpN},\frac{n_2}{cpN})$. The first bound is given
by the following lemma:

\begin{lemma}\label{lm-integration-parts}
  Let $(d,e)$ be of {\rm Type} $(L,L)$ or of {\rm Type} $(1,L^2)$. Let
  $H_{\phi}$ and $Z$ be defined by \eqref{eq-def-hphi} and
  \eqref{defZ}. Assume that $V$ satisfies $(V(C,P,Q))$ and that
  $n_1n_2\not=0$.
\par
\emph{(1)} For $\phi=\phi_{a,b}$, we have
$$
\frac{1}{(pN)^2}\widehat H_{\phi_{a,b}}
\Bigl(\frac{n_1}{cpN},\frac{n_2}{cpN}\Bigr) \ll
\frac{P^2}{d}\frac{Z^{a-b}}{(1+Z)^{a+1/2}}
\Bigl(\frac{cdP^{-1}(Q+Z)}{|n_1|}\Bigr)^\mu
\Bigl(\frac{cP^{-1}(Q+Z)}{|n_2|}\Bigr)^\nu
$$
for all $\mu$, $\nu\geq 0$, where the implied constant depends on
$(N,\mu,\nu,a,b)$.
\par
\emph{(2)} For $\phi=2\pi i^{-k}J_{k-1}$, we have
$$
\frac{1}{(pN)^2}\widehat H_{\phi}
\Bigl(\frac{n_1}{cpN},\frac{n_2}{cpN}\Bigr) \ll \frac{P^2}{d}
\Bigl(\frac{cdP^{-1}(Q+Z)}{|n_1|}\Bigr)^\mu
\Bigl(\frac{cP^{-1}(Q+Z)}{|n_2|}\Bigr)^\nu
$$
for all $\mu$, $\nu\geq 0$, where the implied constant depends on
$(N,\mu,\nu)$, but not on $k$.
\end{lemma}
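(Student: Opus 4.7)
The plan is to transform the integral defining $\widehat{H}_\phi$ into a form where repeated integration by parts controls the decay in $n_1$ and $n_2$. First, substitute $x=(p/d)u$ and $y=pv$ in the Fourier integral
$$\widehat{H}_\phi\Bigl(\tfrac{n_1}{cpN},\tfrac{n_2}{cpN}\Bigr)=\iint H_\phi(x,y)\,e\Bigl(-\tfrac{n_1 x+n_2 y}{cpN}\Bigr)dx\,dy$$
to place the support on $u,v\in[P,2P]$; the Jacobian contributes $p^2/d$, the phase becomes $e(-n_1u/(cdN)-n_2v/(cN))$, and the Bessel argument becomes $\alpha(u,v)=(4\pi/(cN))\sqrt{euv/d}$, which satisfies $\alpha\asymp Z$ on the support. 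Thus
$$\frac{1}{(pN)^2}\widehat{H}_\phi\Bigl(\tfrac{n_1}{cpN},\tfrac{n_2}{cpN}\Bigr)=\frac{1}{N^2 d}\iint V(u)V(v)\phi(\alpha(u,v))\,e\Bigl(-\tfrac{n_1 u}{cdN}-\tfrac{n_2 v}{cN}\Bigr)du\,dv.$$

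The next step is to obtain sharp pointwise bounds on the mixed partial derivatives of $V(u)V(v)\phi(\alpha(u,v))$. Since $\alpha$ is a constant multiple of $\sqrt{uv}$, we have $u\partial_u\alpha=v\partial_v\alpha=\alpha/2$, so $(u\partial_u)^j(v\partial_v)^k\phi(\alpha(u,v))$ is a finite linear combination of terms $\alpha^r\phi^{(r)}(\alpha)$ with $r\leq j+k$ and with bounded coefficients. Standard Bessel estimates ($|J_a(x)|\leq\min(1,x^a/a!)$ and $|J_a^{(r)}(x)|\leq 1$ uniformly in $a\geq 0$) then yield
$$|\alpha^r\phi_{a,b}^{(r)}(\alpha)|\ll_{a,b,r}\frac{\alpha^{a-b}}{(1+\alpha)^{a+1/2}}(1+\alpha)^r,\qquad|\alpha^r\phi_k^{(r)}(\alpha)|\ll_r(1+\alpha)^r,$$
the second bound being uniform in the weight $k$. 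The extra $(1+\alpha)^r$ factor in case (1) reflects the fact that derivatives of $J_a$ do not gain decay at infinity due to oscillation. Combining with $|V^{(r)}(u)|\leq(Q/u)^r$ via Leibniz's rule, and using that $Q\geq 1$, gives on the support
$$\bigl|\partial_u^\mu\partial_v^\nu[V(u)V(v)\phi(\alpha(u,v))]\bigr|\ll\Bigl(\frac{Q+Z}{u}\Bigr)^\mu\Bigl(\frac{Q+Z}{v}\Bigr)^\nu\Psi_0(Z),$$
where $\Psi_0(Z)=Z^{a-b}/(1+Z)^{a+1/2}$ in case (1) and $\Psi_0(Z)=O(1)$ in case (2).

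Finally, since $n_1n_2\neq 0$, integrating by parts $\mu$ times in $u$ and $\nu$ times in $v$ produces the factors $(cdN/(2\pi i n_1))^\mu$ and $(cN/(2\pi i n_2))^\nu$ from the phase; bounding the remaining integral by the volume $P^2$ of the support times the above derivative estimate (evaluated at $u,v\asymp P$) yields
$$\frac{1}{(pN)^2}\Bigl|\widehat H_\phi\Bigl(\tfrac{n_1}{cpN},\tfrac{n_2}{cpN}\Bigr)\Bigr|\ll\frac{P^2}{d}\Psi_0(Z)\Bigl(\frac{cd(Q+Z)}{P|n_1|}\Bigr)^\mu\Bigl(\frac{c(Q+Z)}{P|n_2|}\Bigr)^\nu$$
after absorbing powers of $N$ into the implied constant; this is exactly the claim in both cases, the uniformity in $k$ in case (2) being inherited from the uniform Bessel bound. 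The main technical delicacy is the correct accounting of the two competing sources of growth in the derivative estimates: the factor $Q$ from differentiating $V$ and the factor $1+Z$ from differentiating the oscillating Bessel function combine via Leibniz to produce the characteristic $(Q+Z)$-strength of the decay rate.
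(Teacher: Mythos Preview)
Your approach is essentially the paper's: rescale to put the support on $[P,2P]^2$, control the mixed partials of the amplitude via Bessel derivative bounds, and integrate by parts $\mu$ times in $u$ and $\nu$ times in $v$.

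One imprecision: the parenthetical Bessel facts you cite ($|J_a(x)|\leq\min(1,x^a/a!)$ and $|J_a^{(r)}(x)|\leq 1$) are \emph{not} enough to yield your displayed bound for $\alpha^r\phi_{a,b}^{(r)}(\alpha)$ in case~(1). With only $|J_a^{(r)}|\leq 1$ you obtain at best $\alpha^{a-b}/(1+\alpha)^{a}\cdot(1+\alpha)^r$, losing the extra $(1+\alpha)^{1/2}$ in the denominator coming from the large-argument decay $J_a(x)\ll x^{-1/2}$. The correct input is the sharper estimate the paper quotes, namely $(z/(1+z))^\nu J_a^{(\nu)}(2\pi z)\ll_{a,\nu} z^a/(1+z)^{a+1/2}$, which does give exactly your claimed bound. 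This is a citation issue rather than a structural gap: your intermediate estimate is true, only the justification needs the asymptotic expansion of $J_a$ rather than the crude $|J_a^{(r)}|\leq 1$.

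For case~(2), your route differs from the paper's and is arguably cleaner. The paper obtains $k$-uniformity by inserting the integral representation $J_{k-1}(2\pi x)=\int_0^1 e(-(k-1)t+x\sin 2\pi t)\,dt$ before integrating by parts. You instead use the recurrence $2J_n'=J_{n-1}-J_{n+1}$ together with $|J_n|\leq 1$ for all integers $n$, giving $|J_{k-1}^{(r)}|\leq 1$ uniformly in $k$; this yields $|\alpha^r\phi_k^{(r)}(\alpha)|\ll_r(1+\alpha)^r$ directly. Both arguments produce the same $(Q+Z)$-strength decay after Leibniz.
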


\begin{proof}
  (1) Recalling~(\ref{eq-def-hphi}) and~(\ref{defphi}), we have
\begin{multline}
  \frac{1}{p^2}\widehat H_{\phi_{a,b}}
  \Bigl(\frac{n_1}{cpN},\frac{n_2}{cpN}\Bigr)= \\ \nonumber
  \frac{1}{d}\iint_{\Rr^2} V ({x} )V ({y}) { i^{b-a}\Bigl({4\pi
      \frac{(e/d)^{1/2}}{cN} \sqrt{xy}}\Bigr)^{-b}J_{a}\Bigl({4\pi
      \frac{(e/d)^{1/2}}{cN} \sqrt{xy}}\Bigr)
    e\Bigl(-\frac{(n_1/d)x+n_2y}{cN}\Bigr) dxdy}.
\end{multline}
\par
We use the uniform estimates
$$
\Bigl(\frac{z}{1+z}\Bigr)^{\nu}J_a^{(\nu)}(2\pi z)\ll\frac{z^a}{(1+z)^{a+1/2}}
$$
for the Bessel function, valid for $z>0$ and $\nu\geq 0$, where the
implied constant depends on $a$ and $\nu$
(see~\cite[Chap. VII]{EMOT}). We also remark that $Z$ is the order of
magnitude of the variable inside $J_a (\cdots)$ in the above formula,
then integrating by parts $\mu$ times with respect to $x$ and $\nu$
times with respect to $y$, we get the result indicated.
\par
(2) This is very similar: since we want uniformity with respect to $k$,
we use the integral representation
$$
J_{k-1}(2\pi x)=\int_{0}^1e(-(k-1)t+x\sin(2\pi t))dt
$$
for the Bessel function (\cite[8.411]{gr}). After inserting it in the
integral defining the Fourier transform, we find the desired estimates
by repeated integrations by parts as before.
\end{proof}

Applying this Lemma with $\mu,\nu$ very large, remarking that in both
cases we have $dZ \leq LP$,
and appealing to the bound~(\ref{eq-bound-wwd}), namely
$$
|\wwd\bigl(K;\gamma(c,d,e,n_1,n_2)
\bigr)
|\leq M^2p,
$$
we see that, for any fixed $\eps>0$, the contributions to $
\tilde{\mathcal{E}}_{\phi}(c,d,e)$ of the integers $n_1$, $n_2$ with
\begin{equation}\label{eq-choice-N}
  |n_1|\geq   N_1=p^\eps \frac{cd(Q+Z)}{P},\quad
  \text{ or }\quad |n_2|\geq  N_2=\frac{N_1}{d}=
  p^{\eps}\frac{c(Q+Z)}{P}
\end{equation}
are negligible (see \eqref{outcome}).
\par
Thus we get:

\begin{proposition}[Off-diagonal terms]\label{pr-m3l}
  Let $(d,e)$ be of {\rm Type} $(L,L)$ or of {\rm Type} $(1,L^2)$. Let
  $\delta>0$ and $\eps>0$ be fixed. Let $C$, $N_1$ and $N_2$ be
  defined by \eqref{Cchoice} and \eqref{eq-choice-N}. Then for
  $\phi=\phi_{a,b}$ or $2\pi i^{-k}J_{k-1}$, we have
$$
M_3[\phi;d,e]=\frac{1}{pN}\sum_{c\leq C}c^{-1}
\mathcal{E}_{\phi}(c,d,e) +O(M^2p^{-2})
$$
where $\mathcal{E}_{\phi}$ is the subsum of
$\tilde{\mathcal{E}}_{\phi}$ given by
$$
\mathcal{E}_{\phi}(c,d,e) =\frac{1}{p} \dblsum_{\stacksum{1\leq
    |n_1|\leq N_1,\ 1\leq |n_2|\leq
    N_2}{\stacksum{(n_2,cN)=1}{{n_1n_2\equiv e\mods{cN}}}}} \widehat
H_{\phi}\Bigl(\frac{n_1}{cpN},\frac{n_2}{cpN}\Bigr) \wwd\Bigl(K;
\begin{pmatrix}
n_1 & (n_1n_2-e)/(cN)\\
cdN & dn_2
\end{pmatrix}
\Bigr).
$$
\par
The implied constant depends on $( \delta, \eps, N,a,b)$, but is
independent of $k$ for $\phi=2\pi i^{-k}J_{k-1}$.
\end{proposition}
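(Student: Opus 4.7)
\textbf{Proof plan for Proposition~\ref{pr-m3l}.} The starting point is the already-established identity~(\ref{outcome}), which writes
$$
\tilde{\mathcal{E}}_{\phi}(c,d,e)=\frac{1}{p} \dblsum_{\stacksum{n_1n_2\neq 0,\ (n_2,cN)=1}{n_1n_2\equiv e\mods{cN}}} \widehat H_{\phi}\Bigl(\frac{n_1}{cpN},\frac{n_2}{cpN}\Bigr)\wwd\bigl(K;\gamma(c,d,e,n_1,n_2)\bigr).
$$
The goal is simply to justify truncating this sum to $|n_1|\leq N_1$, $|n_2|\leq N_2$, and to show that the tail contributes only $O(M^{2}p^{-2})$ after insertion into~(\ref{eq-m3}) and summation over $c\leq C$. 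Thus the whole proposition is a quantitative tail-estimate; the core non-trivial inputs (the Voronoi/Poisson transformation and the Fourier decay lemma) are already in place.

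The plan is to insert the trivial bound $|\wwd(K;\gamma)|\leq M^{2}p$ from~(\ref{eq-bound-wwd}) and to combine it with Lemma~\ref{lm-integration-parts} applied with large parameters $\mu,\nu$. Concretely, write the tail as the union of $\{|n_1|\geq N_1\}$ (with $n_2$ arbitrary) and $\{|n_2|\geq N_2\}$ (with $n_1$ arbitrary), and in each region apply the lemma with one exponent equal to a large integer $A$ (chosen at the end) and the other equal to $0$ on the compact range $|n_{j}|\leq N_{j}$ and to $A$ on the complementary range. Since $N_1=p^{\eps}cd(Q+Z)/P$ and $N_2=N_1/d$ are exactly the thresholds at which the decay factors $(N_j/|n_j|)^{A}$ begin to bite in Lemma~\ref{lm-integration-parts}, each geometric sum in $n_j$ beyond $N_j$ gains a factor $p^{-A\eps}$. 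Summing the remaining bounded ranges trivially and inserting $|\wwd|\leq M^{2}p$ yields, for either choice of $\phi$, a tail contribution to $\tilde{\mathcal{E}}_{\phi}(c,d,e)$ bounded by $M^{2}p^{2}P^{2}c^{2}(Q+Z)^{2}p^{-(A-O(1))\eps}$.

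Next, inserting this bound into $M_3[\phi;d,e]=(pN)^{-1}\sum_{c\leq C}c^{-1}\tilde{\mathcal{E}}_{\phi}(c,d,e)$, using $C\leq p^{\delta}LP$ from~(\ref{Cchoice}), and carrying out the $c$-sum, one obtains a total tail contribution $\ll M^{2}pP^{2}(Q+Z)^{2}C^{2}p^{-(A-O(1))\eps}$, which is polynomially bounded in $p,L,P,Q$. Choosing $A=A(\delta,\eps,N)$ large enough in terms of these exponents (and using $P,Q\leq p$) makes this error $\ll M^{2}p^{-2}$, uniformly in $k$ in the case $\phi=2\pi i^{-k}J_{k-1}$ thanks to the $k$-independence of the constants in part (2) of Lemma~\ref{lm-integration-parts}.

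The only subtle point (and the place one must be careful) is the uniformity in $k$: part (2) of Lemma~\ref{lm-integration-parts} deliberately omits the $Z$-decay factor that appears in part (1), so one must use only $\mu,\nu$ for the integration-by-parts and avoid relying on anything that would bring in $k$-dependent constants. Apart from this bookkeeping, the argument is a clean truncation via Fourier decay and the trivial bound for $\wwd$, with no further difficulty; the deeper input (the modular interpretation~(\ref{eq-sum-as-corr}) of the oscillatory part via correlation sums) has already been used to reach~(\ref{outcome}).
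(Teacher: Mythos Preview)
Your proposal is correct and follows exactly the approach the paper takes: the paper's proof of this proposition is essentially the paragraph immediately preceding its statement, where Lemma~\ref{lm-integration-parts} is applied ``with $\mu,\nu$ very large'' together with the trivial bound~(\ref{eq-bound-wwd}) to see that the tails $|n_1|\geq N_1$ or $|n_2|\geq N_2$ are negligible. Your write-up is more explicit about the bookkeeping (splitting the tail, choosing $A$ large depending on $\delta,\eps$, and invoking $P,Q\leq p$), and you correctly identify that the $k$-uniformity comes from part~(2) of Lemma~\ref{lm-integration-parts}.
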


\subsection{A more precise evaluation} 

In the range $|n_i|\leq N_i,\ i=1,2$ we will need a more precise
evaluation. We will take some time to prove the following result:

\begin{lemma}\label{more precise1L2} 
  Let $(d,e)$ be of {\rm Type} $(L,L)$ or of {\rm Type} $(1,L^2)$. Let
  $H_{\phi}$ and $Z$ be defined by \eqref{eq-def-hphi} and
  \eqref{defZ}. Assume that $V$ satisfies $(V(C,P,Q))$ and that
  $n_1n_2\not=0$.
\par
\emph{(1)} For $\phi=\phi_{a,b}$, we have
$$
\frac{1}{p^2}\widehat H_{\phi_{a,b}}
\Bigl(\frac{n_1}{cpN},\frac{n_2}{cpN}\Bigr)\ll p^{\delta} \frac{
  P^2}d\min\Bigl(\frac{1}{Z^{1/2}},\frac{Q}{Z}\Bigr),
$$
where the implied constant depends on $(C,a,b,N)$.
\par
\emph{(2)} For $\phi=\phi_{k}$, we have
$$
\frac{1}{p^2}\widehat H_{\phi_{k}} \Bigl(\frac{n_1}{cpN},\frac{n_2}{cpN}\Bigr)
\ll k^3p^{\delta}
\frac{ P^2}d\min\Bigl(\frac{1}{Z^{1/2}},\frac{Q}{Z}\Bigr),
$$
where the implied constant depends on $C$ and $N$.
\end{lemma}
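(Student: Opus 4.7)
The plan is to make the change of variables $x = pPu/d$ and $y = pPv$, which concentrates the integrand in the unit box $(u,v) \in [1,2]^2$ and extracts the prefactor $P^2/d$. Specifically one obtains $\frac{1}{p^2}\widehat H_{\phi}\bigl(\frac{n_1}{cpN},\frac{n_2}{cpN}\bigr) = \frac{P^2}{d}\, I$, where
\begin{equation*}
I = \iint_{[1,2]^2} V(Pu)\, V(Pv)\, \phi\bigl(4\pi Z\sqrt{uv}\bigr)\, e(-Au - Bv)\, du\, dv,
\end{equation*}
with $A = n_1 P/(cdN)$ and $B = n_2 P/(cN)$. By the cutoff~\eqref{eq-choice-N}, these satisfy $|A|, |B| \ll p^{\eps}(Q + Z)$. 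The task thus reduces to proving $|I| \ll p^{\delta}\min(Z^{-1/2}, Q/Z)$ (with an extra $k^3$ loss for $\phi_k$).

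For the bound $|I| \ll p^\delta Z^{-1/2}$, I will use the standard pointwise Bessel estimate $|J_a(x)| \ll (1+x)^{-1/2}$ (implied constant depending on $a$), which yields $|\phi_{a,b}(4\pi Z\sqrt{uv})| \ll Z^{-1/2}$ for $u,v\in[1,2]$ and $Z \gtrsim 1$. The regime $Z \lesssim 1$ is absorbed into the $p^\delta$ factor via~\eqref{eq-lower-bound-z}, using the trivial bound $|\phi_{a,b}|\ll 1$ on compact subsets of $[0,\infty)$. Integrating over the unit box with $|V|\leq 1$ then gives $|I| \ll p^\delta Z^{-1/2}$.

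The bound $|I| \ll p^\delta Q/Z$ is the more delicate one, and comes from integration by parts exploiting the oscillation of $\phi(4\pi Z\sqrt{uv})$ at frequency $\sim Z$. The plan is to substitute the standard asymptotic expansion $J_a(z) \sim \sqrt{2/(\pi z)} \cos(z - \pi a/2 - \pi/4)$ (with controlled remainder), which splits $I$ into integrals of the form
\begin{equation*}
I_\pm = \iint V(Pu)\, V(Pv)\, \omega(Z\sqrt{uv})\, e\bigl(\pm 2 Z\sqrt{uv} - Au - Bv\bigr)\, du\, dv,
\end{equation*}
where $\omega(z) = O(z^{-1/2-b})$ has tame logarithmic derivatives. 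The partial derivatives of the phase are $\pm Z\sqrt{v/u} - A$ and $\pm Z\sqrt{u/v} - B$, each of absolute value $\gtrsim Z$ on $[1,2]^2$ when $|A|,|B| \ll Z$; one then performs a single integration by parts in whichever variable exhibits the larger directional derivative, gaining a factor $\ll 1/Z$ against a derivative of $V(Pu)V(Pv)$ of size $\ll Q$ from Condition $(V(P,Q))$. When $|A|$ or $|B|$ is $\gtrsim Z$, one integrates by parts against the factor $e(-Au-Bv)$ instead, using only the trivial bound $|\phi|\ll 1$; this yields a saving of $1/\max(|A|,|B|) \leq 1/Z$, again with pickup $\ll Q$ from $V$.

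For $\phi = \phi_k = 2\pi i^{-k} J_{k-1}$ the same analysis applies, but now the pointwise, asymptotic, and derivative bounds on $J_{k-1}$ must be taken uniform in $k$; invoking uniform (Langer-type) Bessel asymptotics in the order $k$ produces a polynomial-in-$k$ loss, which is controlled by $k^3$ in the final estimate. The main obstacle is the execution of Step 3: the phase $2Z\sqrt{uv}$ has identically vanishing Monge--Amp\`ere determinant on $[1,2]^2$, so two-dimensional stationary phase does not apply directly, and the integration by parts must be carried out one variable at a time, carefully tracking the interplay between the Bessel oscillation at frequency $Z$ and the Fourier phase $e(-Au-Bv)$. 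It is precisely the restrictions $|n_i|\leq N_i$ from~\eqref{eq-choice-N} that keep $|A|$ and $|B|$ under control in terms of $Q$ and $Z$ and make this bookkeeping tractable.
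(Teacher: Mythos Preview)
Your overall strategy---Bessel asymptotic expansion, then one-variable integration by parts---is the same as the paper's, and the bound $|I|\ll p^{\delta}Z^{-1/2}$ goes through as you say. The gap is in your argument for $|I|\ll p^{\delta}Q/Z$. After the expansion the phase is $\Psi_\pm(u,v)=\pm 2Z\sqrt{uv}-Au-Bv$, with $\partial_u\Psi_\pm=\pm Z\sqrt{v/u}-A$. Your truncation only gives $|A|,|B|\ll p^{\eps}(Q+Z)$, so in the generic regime $Q\ll Z$ one has $|A|\asymp Z$, and then $\partial_u\Psi_\pm$ can vanish on $[1,2]^2$; the claim that both partial derivatives are $\gtrsim Z$ whenever $|A|,|B|\ll Z$ is therefore false. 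Your fallback---integrate by parts against $e(-Au)$ alone when $|A|\gtrsim Z$, ``using only the trivial bound $|\phi|\ll 1$''---also fails: integration by parts forces you to differentiate the full amplitude, including $\phi(4\pi Z\sqrt{uv})$ (equivalently $e(\pm 2Z\sqrt{uv})$ after the expansion), and this costs a factor $\asymp Z$ that exactly cancels the gain $1/|A|\asymp 1/Z$. Thus neither branch of your dichotomy handles the stationary range $|A|\asymp Z$.

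The paper supplies the missing device. After extracting the oscillatory factor from the Bessel asymptotic, one substitutes $u\mapsto x^2$, $v\mapsto y^2$, so that the phase becomes a \emph{quadratic form} $F(x,y)=c_0x^2+2c_1xy+c_2y^2$ with $|c_1|\asymp Z$. Now $\partial_yF=2c_1x+2c_2y$ is \emph{linear in $x$}, so for each fixed $y$ the set $\{x\in[0,1]:|\partial_yF|\le B\}$ is an interval of length $\le B/|c_1|$; hence the two-dimensional ``bad set'' has Lebesgue measure $\le B/Z$, regardless of where any stationary point lies. One bounds the integral trivially on the bad set and by a single integration by parts in $y$ on its complement (picking up $G_0\ll Q$ from the $V$-derivatives), obtaining $\ll Q(B/Z+B^{-1})$; the choice $B=\sqrt{Z}$ gives $QZ^{-1/2}$, and combined with the $Z^{-1/2}$ prefactor from the Bessel amplitude this yields $Q/Z$. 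No case distinction on the sizes of $A,B$ is required.
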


\proof We consider the case $\phi=\phi_k$, the other one being
similar. We shall exploit the asymptotic oscillation and decay of the
Bessel function $J_{k-1} (z) $ for large $z$. More precisely, we use
the formula
$$
J_{k-1}(2\pi z)= \frac{1}{\pi z^{1/2}} \Bigl(\cos\Bigl(2\pi
z-\frac{\pi}2(k-1)-\frac{\pi}4\Bigr)+O\Bigl(\frac{k^3}{z}\Bigr)\Bigr)
$$
which is valid uniformly for $z>0$ and $k\geq 1$ with an absolute
implied constant (to see this, use the formula
$$
J_{k-1}(2\pi z)= \frac{1}{\pi z^{1/2}} \Bigl(\cos\Bigl(2\pi
z-\frac{\pi}2(k-1)-\frac{\pi}4\Bigr)+O\Bigl(\frac{1+(k-1)^2}{z}\Bigr)\Bigr)
$$
from, e.g.,~\cite[p.227, (B 35)]{IwI}, which holds with an absolute
implied constant for $z\geq 1+(k-1)^2$, and combine it with the bound
$\vert J_{k-1} (x) \vert \leq 1$.)  
\par
The contribution of the second term in  this expansion to
$$
\frac{1}{p^2}\widehat H_{\phi_{k}}
\Bigl(\frac{n_1}{cpN},\frac{n_2}{cpN}\Bigr)
$$ 
is bounded by
\begin{equation}
\label{Htrivial}
\ll \frac{P^2}d\frac{k^3}{Z^{3/2}}.
\end{equation}
\par
The contribution arising from the first term can be written as a
linear combination (with bounded coefficients) of two expression of
the shape
\begin{multline*}
  \frac{1}{dZ^{1/2}}\int_{\Rr_+^2}\Bigl(\frac{P}{\sqrt{xy}}\Bigr)^{1/2}
  V(x)V(y)e\Bigl(\frac{\pm2\sqrt{(e/d)xy}-(n_1/d)x-n_2y}{cN}\Bigr)dxdy
\\
  =\frac{8P^2}{dZ^{1/2}} \int_{\Rr_+^2}(2{xy})^{1/2}\,V(2Px^2)V(2Py^2)
  e\Bigl(-2P\frac{(n_1/d)x^2\mp 2\sqrt{e/d}xy+n_2y^2}{cN}\Bigr)dxdy.
\end{multline*}
\par
We write these in the form
\begin{equation}
  \frac{8P^2}{dZ^{1/2}}\int_{\Rr_+^2}G(x,y)e(F_\pm(x,y))dx\, dy,
 \label{2976}
\end{equation}
where we note that the function 
$$
G(x,y)=(2xy)^{1/2}V(2Px^2)V(2Py^2)
$$ 
is smooth and compactly supported in $[0,1]^2$, and -- crucially --
the phase
$$
F_\pm (x,y)=-2P\frac{(n_1/d)x^2\mp 2\sqrt{e/d}xy+n_2y^2}{cN}
$$
is a quadratic form.
\par
In particular, since $Z\gg p^{-\delta}$
(see~(\ref{eq-lower-bound-z})), we obtain a first easy bound
\begin{equation}\label{interH}
  \frac{1}{p^2}\widehat H_{\phi_{k}} 
  \Bigl(\frac{n_1}{cpN},\frac{n_2}{cpN}\Bigr)\ll 
 k^3p^{\delta} \frac{P^2}{dZ^{1/2}}.
\end{equation}
\par
We now prove two lemmas in order to deal with the oscillatory
integrals~(\ref{2976}) above, from which we will gain an extra factor
$Z^{1/2}$. We use the notation
$$
\varphi^{(i,j)}=\frac{\partial^{i+j} \varphi}{\partial^i x\partial^j
  y}
$$
for a function $\varphi$ on $\Rr^2$.

\begin{lemma}\label{partialintegration}
  Let $F(x,y)$ be a quadratic form and $G(x,y)$ a smooth function,
  compactly supported on $[0,1]$, satisfying the inequality
$$
\Vert G\Vert_\infty + \Vert G^{(0,1)}\Vert_\infty \leq G_0,
$$
where $G_0$ is some positive constant. Let $\lambda_2$ denote the
Lebesgue measure on $\Rr^2$.
\par
Then, for every $B>0$, we have
$$
\int_0^1\int_0^1 G(x,y) e\bigl( F (x,y)\bigr)dx \, d y\ll G_0
\bigl(\lambda_2 (G(B)) +B^{-1}\bigr),
$$
where
$$
G(B)=\bigl\{ (x,y)\in [0,1]^2\,\mid\, \vert F^{(0,1)}(x,y)\vert \leq
B\bigr\}
$$
and the implied constant is absolute.
\end{lemma}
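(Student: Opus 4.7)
The plan is to run a standard ``integration by parts away from a stationary set'' argument applied slice-by-slice in the $y$-variable, exploiting the fact that $F$ is a \emph{quadratic} form so that $F^{(0,1)}(x,y)$ is affine in $y$ for each fixed $x$.

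Fix $x\in[0,1]$ and set $f(y)=F(x,y)$, so that $f'(y)=F^{(0,1)}(x,y)$ is a linear function of $y$ (with slope $F^{(0,2)}=\mathrm{const}$) and $f''(y)$ is a constant. Consequently the set
$$
I(x)=\{y\in[0,1]:|f'(y)|\leq B\}
$$
is an interval (possibly empty, possibly all of $[0,1]$), and its complement in $[0,1]$ is a union of at most two subintervals on each of which $f'$ has constant sign and satisfies $|f'|\geq B$. By Fubini, $\int_0^1\lambda_1(I(x))\,dx=\lambda_2(G(B))$, so the contribution of $I(x)$ to the original integral is controlled trivially by
$$
\int_0^1 \Bigl|\int_{I(x)} G(x,y)\,e(F(x,y))\,dy\Bigr|\,dx\leq G_0\,\lambda_2(G(B)).
$$

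On each subinterval $J\subset [0,1]\setminus I(x)$, I would integrate by parts once in $y$ using $e(f(y))=\frac{1}{2\pi i f'(y)}\frac{d}{dy}e(f(y))$. This produces: a boundary term $\bigl[G(x,y)e(f(y))/(2\pi i f'(y))\bigr]_{\partial J}$ of size $O(G_0/B)$; a term $-\int_J \frac{G^{(0,1)}(x,y)}{2\pi i f'(y)}\,e(f(y))\,dy$ of size $O(G_0/B)$ by $\|G^{(0,1)}\|_\infty\leq G_0$ and $|f'|\geq B$; and the potentially dangerous term
$$
\int_J \frac{G(x,y)\,f''(y)}{2\pi i f'(y)^2}\,e(f(y))\,dy.
$$
Here the quadratic nature of $F$ saves the day: since $f'$ is affine in $y$ with $df'/dy=f''$ and has constant sign on $J$, a change of variable $u=f'(y)$ gives
$$
\int_J\frac{|f''(y)|}{f'(y)^2}\,dy=\int_{f'(J)}\frac{du}{u^2}\leq \frac{2}{B},
$$
so this contribution is also $O(G_0/B)$. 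Summing over the (at most two) components of $[0,1]\setminus I(x)$ yields
$$
\Bigl|\int_{[0,1]\setminus I(x)} G(x,y)\,e(F(x,y))\,dy\Bigr|\ll \frac{G_0}{B},
$$
uniformly in $x$, and integrating over $x\in[0,1]$ and combining with the $I(x)$-bound finishes the proof.

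The only step requiring any care is the estimate of the $f''/f'^2$ term, which is where the hypothesis that $F$ is a quadratic form (rather than a general smooth phase) is genuinely used; the absolute character of the implied constant follows from the change-of-variables bound above, which is independent of the coefficients of $F$. Everything else is routine integration by parts and Fubini.
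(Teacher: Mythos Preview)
Your proof is correct and follows essentially the same route as the paper's: slice in $x$ via Fubini, split each slice into the region where $|F^{(0,1)}|\leq B$ and its complement, bound the former trivially and the latter by one integration by parts in $y$, using that $F^{(0,1)}$ is affine in $y$. The only cosmetic difference is that for the $f''/f'^2$ term you perform the change of variable $u=f'(y)$, whereas the paper simply observes that $F^{(0,2)}$ is constant and $F^{(0,1)}$ has constant sign on $J$, so $\int_J |F^{(0,2)}|/|F^{(0,1)}|^2\,dy\ll B^{-1}$; these are the same computation.
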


\begin{proof} 
For $0\leq x \leq 1$, let 
$$
{\mathcal A}(x)=\bigl\{y\in [0,1]\,\mid \, \vert F^{(0,1)}(x,y)\vert
\leq B\bigr\},
$$
and $\overline{\mathcal A} (x)$ its complement in $[0,1]$.  Note that
${\mathcal A}(x)$ is a segment (possibly empty), with length
$\lambda_1 ({\mathcal A}(x))$. Using Fubini's formula, we write
\begin{align}
  \int_0^1\int_0^1 G(x,y) e\bigl( F (x,y)\bigr)dx \, d y&= \int_0^1
  \Bigl( \int_0^1 G(x,y) e\bigl( F (x,y)\bigr)d \, y\Bigr) d x
  \nonumber\\
  &= \int_0^1 I(x) d x,\label{red3}
\end{align}
say.  To study $I(x)$, we use the partition $[0,1]=\mathcal{A}(x)\cup
\overline{\mathcal{A}(x)}$, leading to the inequality
$$
\vert I(x) \vert \leq G_0\, \lambda_1 ({\mathcal A} (x)) +\Bigl\vert\,
\int_{\overline{\mathcal A} (x)} G(x,y)e\bigl( F (x,y)\bigr) dy
\Bigr\vert.
$$
\par
To simplify the exposition, we suppose that $\overline{\mathcal A}(x)$
is a segment of the form $]a(x),1]$ with $0\leq a(x)\leq 1$ (when it
consists in two segments, the proof is similar). Integrating by part,
we get
\begin{multline}
  \int_{\overline{\mathcal A} (x)} G(x,y) e\bigl( F (x,y)\bigr)dy
  =\int_{a(x)}^1 \frac{G }{F^{(0,1)}} (x,y) \cdot F^{(0,1)} (x,y)\cdot
  e\bigl( F (x,y)\bigr)dy
  \\
  = \Bigl[ \frac{G }{F^{(0,1)}} (x,y) \cdot e\bigl( F
  (x,y)\bigr)\Bigr]_{y=a(x)}^{y=1} -\int_{a(x)}^1 \Bigl(\frac{G
  }{F^{(0,1)}} (x,y)\Bigr)^{(0,1)} \cdot e\bigl( F (x,y)\bigr)\,dy.
\label{red2}
\end{multline}
\par
The first term in the right hand side of \eqref{red2} is $\ll
G_0B^{-1}$. The modulus of the second one is
$$
\leq G_0 \int_{a(x)}^1\Bigl\{ \frac{1}{\vert F^{(0,1)} \vert }
+\frac{\vert F^{(0,2)}\vert }{\vert F^{(0,1)}\vert ^2}\Bigr\}(x,y) \,
d y \ll G_0 B^{-1}
$$
since, on the interval of integration, $F^{(0,1)}$ has a constant sign
and $F^{(0,2)}$ is constant.  Inserting these estimations in
\eqref{red3} and using the equality
$$
\int_0^1 \lambda_1 ({\mathcal A}(x))\, d x=\lambda_2 (G(B)),
$$ we complete the proof.
\end{proof}

The following lemma gives an upper bound for the constant $\lambda_2
(G(B))$ that appears in the previous one.

\begin{lemma}\label{red4}  
  Let $F(x,y)= c_0 x^2+2 c_1 xy + c_2 y^2$ be a quadratic form with
  real coefficients $c_i$. Let $B>0$ and let $G(B)$ be the
  corresponding subset of $[0,1]^2$ as defined in Lemma
  \ref{partialintegration}. We then have the inequality
$$
\lambda_2 (G(B)) \leq B/\vert c_1 \vert.
$$
\end{lemma}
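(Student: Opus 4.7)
The plan is the following: the partial derivative $F^{(0,1)}(x,y) = \partial F/\partial y$ of the quadratic form $F$ is the \emph{linear} function
\[
F^{(0,1)}(x,y) = 2c_1 x + 2 c_2 y,
\]
so the set $G(B)$ is the intersection of the strip $\{(x,y)\in\Rr^2 : |2c_1 x + 2 c_2 y|\leq B\}$ with the unit square $[0,1]^2$. The strategy is then to apply Fubini's theorem slicing in the variable $y$ (note the asymmetry: we want a bound involving $|c_1|$, which is the coefficient of $x$ in $F^{(0,1)}$).

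More precisely, for each fixed $y\in[0,1]$, the $x$-section of $G(B)$ is
\[
G(B)_y = \Bigl\{x\in[0,1] : |2c_1 x + 2 c_2 y|\leq B\Bigr\}.
\]
If $c_1 = 0$ the stated bound is vacuous (the right-hand side being $+\infty$); otherwise $G(B)_y$ is the preimage of the interval $[-B,B]$ under the affine map $x\mapsto 2c_1 x + 2c_2 y$, intersected with $[0,1]$, so it is a (possibly empty) interval of length at most $B/|c_1|$.

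Applying Fubini's theorem gives
\[
\lambda_2(G(B)) = \int_0^1 \lambda_1(G(B)_y)\, dy \leq \int_0^1 \frac{B}{|c_1|}\, dy = \frac{B}{|c_1|},
\]
as claimed. There is no real obstacle here: the only substantive observation is that the condition defining $G(B)$ is \emph{linear} in $(x,y)$ because $F$ is quadratic, so the sections are just preimages of an interval under an affine map and Fubini applied in the correct variable yields the stated bound immediately.
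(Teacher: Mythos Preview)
Your proof is correct and follows essentially the same approach as the paper: both compute $F^{(0,1)}(x,y)=2c_1x+2c_2y$, apply Fubini by fixing $y$ and bounding the length of the $x$-section by $B/|c_1|$, and then integrate over $y\in[0,1]$. The only minor addition in your argument is the explicit remark that the case $c_1=0$ gives a vacuous bound, which the paper leaves implicit.
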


\begin{proof} 
  By integrating with respect to $x$ first, we can write
$$
\lambda_2 (G(B))=\int_0^1\lambda_1(\mathcal{B}(y)) \, d y,
$$
where
$$
\mathcal{B}(y)=\{x\in [0,1]\,\mid\, |2c_1x+2c_2y|=|F^{(0,1)}(x,y)|\leq
B\}.
$$
\par
This set is again a segment, of length at most $B/\vert
c_1\vert$. Integrating over $y$, we get the desired result.
\end{proof}

We return to the study of the integral appearing in \eqref{2976}. Here
we see easily that Lemma \ref{red4} applies with
$$
|c_1|=\frac{2P}{cN}\sqrt{\frac{e}{d}}=2Z,\quad\quad G_0\ll Q.
$$
\par
Hence, by Lemma \ref{partialintegration}, we deduce
$$
\int_{\Rr_{\geq 0}^2}G(x,y)e(F_\pm(x,y))dx\, d y\ll {Q}\bigl(B/Z
+B^{-1}\bigr),
$$
for any $B>0$.  Choosing $B=\sqrt{Z}$, we see that the above integral
is $\ll QZ^{-1/2}$.
\par
It only remains to gather \eqref{Htrivial}, \eqref{2976},
\refs{interH} with the bound $Z^{-3/2}\ll p^{\delta/2}Q/Z$ to complete
the proof of Lemma~\ref{more precise1L2}.  \qed

\subsection{Contribution of the non-correlating matrices}

From now on, we simply choose $\delta=\eps>0$ in order to finalize the
estimates. 
\par
We start by separating the terms according as to whether
$$
|\wwd(K;\gamma(c,d,e,n_1,n_2))|\leq Mp^{1/2}
$$ 
or not, {\em i.e.}, as to whether the reduction modulo $p$ of the
resonating matrix $\gamma(c,d,e,n_1,n_2)$ is in the set $\hautk{K}{M}$
of $M$-correlation matrices or not (see~(\ref{eq-hautk})). Thus we
write
$$
\mathcal{E}_{\phi}(c,d,e)=\mathcal{E}^{c}_{\phi}(c,d,e)+
\mathcal{E}^{n}_{\phi}(c,d,e),
$$
where
$$
\mathcal{E}^{c}_{\phi}(c,d,e)=\frac{1}{p} \dblsums_{\stacksum{1\leq
    |n_1|\leq N_1,\ 1\leq |n_2|\leq
    N_2}{\stacksum{(n_2,cN)=1}{{n_1n_2\equiv e\mods{cN}}}}} \widehat
H_{\phi}\Bigl(\frac{n_1}{cpN},\frac{n_2}{cpN}\Bigr) \wwd\Bigl(K;
\gamma(c,d,e,n_1,n_2)\Bigr),
$$
where $\dblsums$ restricts to those $(n_1,n_2)$ such that 
$$
\gamma(c,d,e,n_1,n_2)
\mods{p} \in \hautk{K}{M},
$$
and $\mathcal{E}^{n}_{\phi}$ is the contribution of the remaining
terms. Similarly, we write
\begin{align*}
M_3[\phi;d,e]&= \frac{1}{pN}\sum_{c\leq
  C}{c^{-1}\Bigl(\mathcal{E}^{n}_{\phi}(c,d,e)+
  \mathcal{E}^{c}_{\phi}(c,d,e)\Bigr)}+O(M^2p^{-2})
\\&=
M_3^{n}[\phi;d,e]+M_3^{c}[\phi;d,e]+O(M^2p^{-2}),
\end{align*}
say.
\par
We will treat $M_3^{n}[\phi;d,e]$ slightly differently, depending on
whether $(d,e)$ is of {\rm Type} $(L,L)$ or of {\rm Type} $(1,L^2)$. 
For $\mathsf{T}=(L,L)$ or $(1,L^2)$, we write
$$
M_3^{n,\mathsf{T}}[\phi]=
\sum_{\ell_1\not=\ell_2}b_{\ell_1}\overline{b_{\ell_2}}
\sum_{de=\ell_1\ell_2, \text{ type
    }\mathsf{T}}M_3^n[\phi;d,e].
$$
\par
Notice that in both cases we have
$$
\frac{N_1N_2}{c}=p^{2\eps}\Bigl(\frac{cdQ}{P}+\frac{(de)^{1/2}}{N}\Bigr)
\Bigl(\frac{Q}{P}+\frac{(e/d)^{1/2}}{cN}\Bigr)\gg \frac{L}{P}\gg 1,
$$
by \refs{p-epsilon-P}, \refs{defZ} and \refs{eq-choice-N}; here the implied constant depends on $N$. This
shows that the total numbers of terms in the sum
$\mathcal{E}_{\phi}(c,d,e)$ (or its subsums
$\mathcal{E}_{\phi}^n(c,d,e)$) is $\ll N_1N_2c^{-1}$.
\par
-- When $(d,e)$ is of {\rm Type} $(L,L)$, we appeal simply to Lemma
\ref{lm-integration-parts} with $\mu=\nu=0$, and obtain
\begin{align*}
  c^{-1}\mathcal{E}^{n}_{\phi}(c,d,e)&\ll
  c^{-1}Mp^{3/2}\dblsums_{\stacksum{1\leq |n_1|\leq N_1,\ 1\leq
      |n_2|\leq N_2}{\stacksum{(n_2,cN)=1}{{n_1n_2\equiv
          e\mods{cN}}}}}\frac{1}{p^2}\Bigl|\widehat
  H_{\phi}\Bigl(\frac{n_1}{cpN},\frac{n_2}{cpN}\Bigr)\Bigr|\\
  & \ll Mp^{3/2+2\eps} \frac{P^2}d \frac{N_1N_2}{c^2}\ll
  Mp^{3/2+2\eps}(Q+Z)^2 \ll Mp^{3/2+2\eps}\Bigl(Q+\frac{P}{c}\Bigr)^2,
\end{align*}
for $\phi=\phi_{a,b}$ or $\phi=\phi_k$.
\par
Summing the above over $c\leq C\ll p^{\eps}P$ and then over $(\ell_{1},
\ell_{2})$, and over the pairs $(d,e)$ of {\rm Type} $(L,L)$, we
conclude that 
\begin{equation}\label{contr(L,L)}
  M_3^{n,(L,L)}[\phi] \ll Mp^{1/2+3\eps} L^2 (Q^2P+PQ+P^2)\ll 
  Mp^{1/2+3\eps}L^2PQ(P+Q).
\end{equation}
\par
-- When $(d,e)$ is of {\rm Type} $(1,L^2)$, we have $d=1$ and
$$
c\leq C\ll p^{\eps}LP,\quad
Z\asymp \frac{LP}{cN},\quad
N_1=N_2\asymp p^\eps \frac{c(Q+LP/(cN))}{P}.
$$
\par
We now apply Lemma \ref{more precise1L2}. Considering the case of
$\phi=\phi_k$, we get
\begin{align*}
  c^{-1}\mathcal{E}^{n}_{\phi}(c,d,e)&\ll
  c^{-1}Mp^{3/2}\dblsums_{\stacksum{1\leq |n_1|\leq N_1,\ 1\leq
      |n_2|\leq N_2}{\stacksum{(n_2,cN)=1}{{n_1n_2\equiv
          e\mods{cN}}}}}\frac{1}{p^2}\Bigl|\widehat
  H_{\phi}\Bigl(\frac{n_1}{cpN},\frac{n_2}{cpN}\Bigr)\Bigr|\\
  &
  \ll Mk^3p^{3/2 +2\eps}\frac{P^2Q}Z\frac{N_1N_2}{c^2}
  \ll Mk^3 p^{3/2+2\eps}
  \frac{cQ}{LP}\Bigl(Q+\frac{LP}{c}\Bigr)^2.
\end{align*}
\par
If $\phi=\phi_{a,b}$, we obtain the same bound without the factor
$k^3$, but the implied constant then depends also on $(a,b)$.
\par
We then sum over $c\leq C$, over $(\ell_{1}, \ell_{2})$ and over the
pairs $(d,e)$ of {\rm Type} $(1,L^2)$, and deduce that
\begin{equation}\label{contr(1,L2)}
  M_3^{n,(1,L^2)}[\phi_k] \ll Mk^3p^{1/2+5\eps}
  L^3PQ^3,\quad\quad
  M_3^{n,(1,L^2)}[\phi_{a,b}] \ll Mp^{1/2+5\eps}
  L^3PQ^3.
\end{equation}
\par
Finally, in view of Proposition~\ref{veryeasy}, the combination of
\eqref{contr(L,L)} and \eqref{contr(1,L2)}, and a renaming of $\eps$,
show that
\begin{equation}\label{eq-m3l-bound}
  M_3^{n}[\phi_{a,b}]\ll Mp^{1/2+\eps}L^3PQ^2(P+Q),\quad\quad
  M_3^{n}[\phi_{k}]\ll Mk^3p^{1/2+\eps}L^3PQ^2(P+Q)
\end{equation}
for any $\eps>0$ where the implied constant depends on $(\eps,N,a,b)$
for $\phi=\phi_{a,b}$ and on $(\eps,N)$ for $\phi=\phi_k$.

\section{Contribution of the correlating
  matrices}\label{ssec-conclusion}

To conclude the proof of Proposition~\ref{pr-amplified} we evaluate
the contribution $M_3^{c}[\phi,d,e]$, corresponding to the resonating
matrices whose reduction modulo $p$ is a correlating matrix,
\emph{i.e.}, such that
\begin{equation}
\label{eq-resonating}
\gamma(c,d,e,n_1,n_2)=
\begin{pmatrix}
n_1 & (n_1n_2-e)/(cN)\\
cdN & dn_2
\end{pmatrix} \mods{p}\in \hautk{K}{M}.
\end{equation}
\par
In that case, we will use the estimate
\begin{equation}
\label{badbound}
\Bigl| \wwd\Bigl(K;
\gamma(c,d,e,n_1,n_2)\Bigr)\Bigr|\leq M^2p
\end{equation}
from~(\ref{eq-bound-wwd}).
\par
The basic idea is that correlating matrices are \emph{sparse}, which
compensates the loss involved in this bound.
\par
Corresponding to Definition~\ref{def-admissible}, we write
$$
\mathcal{E}^{c}_{\phi}(c,d,e)=\mathcal{E}^{b}_{\phi}(c,d,e)
+\mathcal{E}^{p}_{\phi}(c,d,e)+
\mathcal{E}^{t}_{\phi}(c,d,e)+\mathcal{E}^{w}_{\phi}(c,d,e)
$$
where the superscripts $b$, $p$, $t$, and $w$ denote the subsums of
$\mathcal{E}^{c}_{\phi}(c,d,e)$ where $(c,n_1,n_2)$ are such that the
resonating matrix $\gamma=\gamma(c,d,e,n_1,n_2)$ is of the
corresponding type in Definition~\ref{def-admissible} (in case a
matrix belongs to two different types, it is considered to belong to
the first in which it belongs in the order $b$, $p$, $t$, $w$).
\par
We write correspondingly
$$
M_3^{cor}[\phi,d,e]=M_3^{b}[\phi,d,e]+M_3^{p}[\phi,d,e]+
M_3^{t}[\phi,d,e]+M_3^{w}[\phi,d,e],
$$ 
and
$$
M_3^{c}[\phi]=M_3^{b}[\phi]+M_3^{p}[\phi]+M_3^{t}[\phi]+M_3^{w}[\phi].
$$
\par
Most of the subsequent analysis works when $d$ and $e$ are fixed, and
we will therefore often write
$$
\gamma(c,d,e,n_1,n_2)=\gamma(c,n_1,n_2)
$$
to simplify notation.
\par
The main tool we use is the fact that, when the coefficients of
$\gamma(c,d,e,n_1,n_2)$ are small enough compared with $p$, various
properties which hold modulo $p$ can be lifted to $\Zz$.

\subsection{Triangular and related matrices}
\label{ssec-borel}

Note that
$$
B(\Ff_p)\cup B(\Ff_p)w\cup wB(\Ff_p)=
\Bigl\{\begin{pmatrix}
a_1&b_1\\c_1&d_1
\end{pmatrix}
\in\PGL_2(\Ff_p)\,\mid\, a_1c_1d_1=0\Bigr\},
$$
so that a matrix $\gamma(c,n_1,n_2)$ can only contribute to
$\mathcal{E}^{b}_{\phi}(c,d,e)$ if $p|cNn_1n_2$. 
\par
If we impose the condition
\begin{equation}
\label{bounded1}
p^{3\eps}LQ<p
\end{equation}
(which will be strengthened later on), noting the bounds
$$
cd\leq dC\leq p^{\eps}P\sqrt{de}\ll p^{\eps}LP,
$$
and
$$
N_1=dN_2=p^{\eps}\frac{cd(Q+Z)}{P}=p^{\eps}
\Bigl(\frac{cdQ}{P}+\frac{cd}{P}\frac{P}{cN}\sqrt{\frac{e}{d}}\Bigr)
\ll p^{2\eps}LQ,
$$
we see that
$$
cdn_1n_2N\equiv 0\mods{p}
$$
is impossible, hence the sum $\mathcal{E}_{\phi}^b(c,d,e)$ is empty
and
\begin{equation}
\label{tribound}
M_3^{b}[\phi;d,e]=0.
\end{equation}

\subsection{Parabolic matrices}
\label{ssec-unipotent}

We now consider $\mathcal{E}_{\phi}^{p}(c,d,e)$, which is also easily
handled. Indeed, a parabolic $\gamma\in\PGL_2(\bar{\Ff}_p)$ has a
unique fixed point in $\Pp^1$, and hence any representative
$\tilde{\gamma}$ of $\gamma$ in $\GL_2(\bar{\Ff}_p)$ satisfies
$\Tr(\tilde{\gamma})^2-4\det(\tilde{\gamma})=0$. 
\par
Now if there existed some matrix $\gamma(c,n_1,n_2)$ which is
parabolic modulo $p$, we would get
$$
(n_1+dn_2)^2=4de=4\ell_1\ell_2\mods{p}.
$$
\par
Under the assumption
\begin{equation}
\label{bounded2}
p^{3\eps}LQ<p^{1/2}
\end{equation}
(which is stronger than~(\ref{bounded1})), this becomes an equality in
$\Zz$, and we obtain a contradiction since the right-hand side
$4\ell_1\ell_2$ is not a square. Therefore, assuming \refs{bounded2},
we have also
\begin{equation}
\label{parbound}
M_3^{p}[\phi;d,e]=0.
\end{equation} 

\subsection{Toric matrices}

We now examine the more delicate case of
$\mathcal{E}_{\phi}^{t}(c,d,e)$. Recall that this is the contribution
of matrices whose image in $\PGL_2(\Fp)$ belong to a set of $\leq M$
tori $\rmT^{x_i,y_i}$. We will deal with each torus individually, so
we may concentrate on those $\gamma(c,n_1,n_2)$ which (modulo $p$) fix
$x\not=y$ in $\Pp^1(\Fp)$. In fact, we can assume that $x$ and $y$ are
finite, since otherwise $\gamma$ would be treated by
Section~\ref{ssec-borel}.
\par
We make the stronger assumption
\begin{equation}
\label{bounded3}
p^{3\eps}LQ<p^{1/3}
\end{equation}
to deal with this case.
\par
We therefore assume that there exists a resonating matrix
$\gamma(c,n_1,n_2)$ whose image in $\PGL_2(\Fp)$ is contained in
$\rmT^{x,y}(\Fp)$. From \refs{bounded1}, we saw already that
$\gamma\mods{p}$ is not a scalar matrix. Now consider the integral
matrix
$$
2\gamma-\Tr(\gamma)\mathrm{Id}=\begin{pmatrix}
n_1-dn_2 & 2(n_1n_2-e)/(cN)\\
2cdN & dn_2-n_1
\end{pmatrix}=\begin{pmatrix}
u & v\\
w & -u
\end{pmatrix}
$$
(which has trace $0$). The crucial (elementary!) fact is that, since
$\gamma$ is not scalar, an element $\gamma_1$ in $\GL_2(\Fp)$ has
image in $\rmT^{x,y}$ if and only $2\gamma_1-\Tr(\gamma_1)\mathrm{Id}$
is proportional to $2\gamma-\Tr(\gamma)\mathrm{Id}$ (indeed, this is
easily checked if $x=0$, $y=\infty$, and the general case follows by
conjugation). 
\par
Hence, if a resonating matrix $\gamma_1=\gamma(c_1,m_1,m_2)$ has
reduction modulo $p$ in $\rmT^{x,y}$, the matrix
$$
2\gamma_1-\Tr(\gamma_1)\mathrm{Id}=
\begin{pmatrix}
  m_1-dm_2 & 2(m_1m_2-e)/(c_1N)\\
  2c_1dN & dm_2-m_1
\end{pmatrix}=\begin{pmatrix}
  u_1 & v_1\\
  w_1 & -u_1
\end{pmatrix}
$$ 
is proportional modulo $p$ to $\begin{pmatrix}u & v\\ w &
  -u\end{pmatrix}$, which gives equations
\begin{equation}\label{eq-equations}
uv_1-u_1v=uw_1-u_1w=vw_1-v_1w=0\mods{p}.
\end{equation}
\par
Because of \refs{bounded3}, one sees that these equalities modulo $p$
hold in fact over $\Zz$.  
We then get
$$
2u^2m_1m_2=u^2(c_1v_1N+2e)=(uc_1N)(uv_1)+2u^2e,
$$
where the first term is also given by
$$
(uc_1N)(uv_1)=\frac{(uw_1)(uv_1)}{2d}=\frac{(u_1w)(u_1v)}{2d}=cNv(m_1-dm_2)^2,
$$
so that 
\begin{equation}\label{eq-corrected-quad}
2u^2m_1m_2-cNv(m_1-dm_2)^2=2eu^2.
\end{equation}
We interpret this relation as $F(m_1,m_2)=2eu^2$, where
$$
F(X,Y)=-cNvX^2+(2u^2+2cNdv)XY-cNd^2vY^2
$$
is an integral binary quadratic form. For $u\not=0$, it is
non-singular, since its discriminant is given by
$$
(2u^2+2Ncdv)^2-4(cNv)(cNd^2v)=
4u^2(u^2+2Ncdv)=4u^2((n_1+dn_2)^2-4de)\not=0.
$$
\par
Note also that all the coefficients of $F(X,Y)$ are $\ll p^A$ for some
$A\geq 0$ and that similarly
$$
|m_1|, |m_2|\leq p^A.
$$
\par
By a classical result going back to Estermann (see,
e.g.,~\cite[Theorem 3]{HB}), the number of integral solutions $(x,y)$
to the equation
$$
F(x,y)=2eu^2
$$
such that $|x|$, $|y|\leq p^A$ is bounded by $\ll p^\eps$ for any
$\eps>0$. But when $m_1$ and $m_2$ are given solutions, the value of
$c_1$ is uniquely determined from the second equation
in~(\ref{eq-equations}). Hence the number of possible triples
$(c_1,m_1,m_2)$ is bounded by $\ll_\eps p^\eps$.
\par
Similarly, if $u=0$, we have $m_1-dm_2=n_1-dn_2=0$, and the
third equation $vw_1-v_1w-0$ becomes
$$
c_1^2(dn_2^2-e)=c^2(dm_2^2-e).
$$
We view this as $G(c_1,m_2)=-ec^2$ where
$$
G(X,Y)=(dn_2^2-e)X^2-(dc^2)Y^2.
$$
\par
This is again a non-degenerate integral quadratic form (note that
$dn_2^2-e\not=0$ since $d$ and $e$ are coprime) with coefficients $\ll
p^A$, and the pairs $(x,y)=(c_1,m_2)$ also satisfy $|x|$, $|y|\ll
p^A$, for some $A\geq 0$. Thus the number of solutions $(c_1,m_2)$ to
$G(c_1,m_2)=-ec^2$ is $\ll p^{\eps}$ for any $\eps>0$. Since
$(c_1,m_2)$ determine $(c_1,m_1,m_2)=(c_1,dm_2,m_2)$, we
get the same bound $\ll p^{\eps}$ for the number of possible triples
$(c_1,m_1,m_2)$.
\par
Using Lemma \ref{more precise1L2} and \refs{badbound}, we then deduce
(for a single torus)
\begin{align*}
  \frac{1}p\sum_{c\leq C}c^{-1}\mathcal{E}^{t}_{\phi_k}(c,d,e) &\ll
  M^2p^{1+\eps} \max_\stacksum{c\leq C}{1\leq |n_i|\leq
    N_i}\frac{1}{cp^2}\Bigl|\widehat
  H_{\phi_k}\Bigl(\frac{n_1}{cpN},\frac{n_2}{cpN}\Bigr)\Bigr|\\
  &\ll M^2k^3 p^{1+\eps} \max_{c\leq C}\frac{P^2}{d}\frac{Q}{cZ} \ll
M^2  k^3p^{1+\eps} \frac{PQ}{L}
\end{align*}
and similarly, without the factor $k^3$, for $\phi_{a,b}$. Hence,
multiplying by the number $\leq M$ of tori and summing up over $\ell_1,\ell_2,d,e$, we have
\begin{equation}
\label{torbound}
M_3^{t}[\phi_{a,b}]\ll M^3p^{1+\eps}LPQ,\quad\quad
M_3^{t}[\phi_{k}]\ll M^3k^3p^{1+\eps}LPQ,
\end{equation}
for any $\eps>0$, where the implied constant depends on
$(\eps,N,a,b)$.

\subsection{Normalizers of tori}

We now finally examine the contribution of $\hauti{w}{K}{M}$,
\emph{i.e.}, of resonating matrices $\gamma(c,n_1,n_2)$ whose image in
$\PGL_2(\Fp)$ are contained in the non-trivial coset of the normalizer
of one of the tori $\rmT^{x_i,y_i}$. Again, we may work with a fixed
normalizer $\rmN^{x,y}$, and we can assume that $x$ and $y$ are
finite. Denote by $R$ the set of resonating matrices with image in
$\rmN^{x,y}-\rmT^{x,y}$.
\par
Suppose that $\gamma=\gamma(c,n_1,n_2)$ is in $R$. We then have
$$
\gamma^2\equiv \det(\gamma)\mathrm{Id}=de\ \mathrm{Id} \mods{p},
$$
and 
$$
\Tr(\gamma)=n_1+dn_2=0 \mods{p}.
$$ 
Assuming, as we do, that \refs{bounded3} holds, then we deduce
$$
n_1=-dn_2,\ \gamma^2=de\mathrm{Id}
$$
over $\Zz$. In particular, $\gamma(c,n_1,n_2)$ only depends on the two
parameters $(c,n_2)$ and we will denote
$$\gamma(c,n_2):=\gamma(c,-dn_2,n_2).$$
\par
Fix some dyadic parameter $D$ with $1\leq D\leq C$. We restrict our
attention first to matrices $\gamma(c,n_2)\in R$ with $D/2\leq c\leq
D$; denote by $R_D$ the set of these matrices. Our aim is to show that
the total number of resonating
matrices in $R_D$ is $\ll_\eps p^{\eps}$ for any $\eps>0$.
\par
We distinguish two cases. If $R_D$ has at most one element up to
multiplication by $\pm 1$ we are obviously done. Otherwise, let
$\gamma_1=\gamma(c_1,n_1)$ and $\gamma_2=\gamma(c_2,n_2)$ be two
elements of $R_D$ with $\gamma_2\not=\pm \gamma_1$. We denote
$$
\gamma=\gamma_1\gamma_2.
$$ 
\par
Because of \eqref{bounded3} we see that the reduction modulo $p$ of
$\gamma_1$ and $\gamma_2$ are not scalar multiples of each other, and
similarly $\gamma\mods p$ is not a scalar matrix. On the other hand,
$\gamma\mods p\in \rmT^{x,y}$ which implies that the matrix
$2\gamma-\Tr(\gamma)\mathrm{Id}\mods p$ anti-commutes with the
elements of $\rmN^{x,y}-\rmT^{x,y}$:
\begin{equation}
\label{anticommute}
\text{ for all }
\sigma\in\ \rmN^{x,y}-\rmT^{x,y},\text{ we have }
\sigma(2\gamma-\Tr(\gamma)\mathrm{Id})=-(2\gamma-\Tr(\gamma))
\sigma\mods{p}.
\end{equation}
\par
Finally, let $\gamma_3=\gamma(c_3,n_3)\in R_D$. Writing
$$
2\gamma-\Tr(\gamma)\mathrm{Id}=\begin{pmatrix}
u & v\\
w & -u
\end{pmatrix}
$$
the anti-commutation relation leads to the relation
$$
-2udn_3+vNdc_3-w\frac{dn_3^2+e}{c_3N}=0\mods{p}.
$$
\par
Looking at the sizes of $u$, $v$, $w$, and using the fact that
$1/2\leq c_i/c_j\leq 2$, we see that if we make the stronger
assumption
\begin{equation}
\label{bounded4}
p^{3\eps}LQ<p^{1/4},
\end{equation}
this equation is valid over $\Zz$ (for instance,
$$
udn_3=\frac{c_1}{c_2}((dn_2)^2+de)dn_3-\frac{c_2}{c_1}((dn_1)^2+de)dn_3,
$$
and the other two are similar). This means that
$$
F(c_3,n_3)=ew
$$
where
$$
F(X,Y)=dvN^2X^2-2duNXY-dwY^2
$$
is again an integral binary quadratic form. Since its discriminant is
$$
(2du)^2-4(dv)(-dw)=4d^2(u^2+vw)\not=0,
$$
it is non-degenerate. 
Hence we can argue as in the previous case, and conclude that, under
the assumption \refs{bounded4}, the total number of resonating
matrices in $R_D$ is $\ll p^{\eps}$ for any $\eps>0$. Summing over the
dyadic ranges, the total number of resonating matrices
$\gamma(c,n_1,n_2)$ for $c\leq C,\ |n_i|\leq N_i,\ i=1,2$ associated
to $\rmN^{x,y}-\rmT^{x,y}$ is also $\ll p^\eps$. 
\par
We deduce then as before the bounds
\begin{align*}
  \frac1p \sum_{c\leq C}c^{-1}\mathcal{E}^{w}_{\phi_k}(c,d,e)&\ll
  M^2p^{1+\eps} \max_{\substack{c\leq C\\1\leq |n_i|\leq N_i,\ i=1,2}}
  \frac{1}{cp^2}\Bigl|\widehat
  H_{\phi_k}\Bigl(\frac{-dn_2}{cpN},\frac{n_2}{cpN}\Bigr)\Bigr|\\
  &\ll M^2p^{1+\eps}k^3 \max_{c\leq C}\frac{P^2}{d}\frac{Q}{cZ}\ll
  M^2k^3p^{1+\eps}\frac{PQ}{L},
\end{align*}
for one normalizer (and similarly with $\phi_{a,b}$ without the $k^3$
factor), and therefore
\begin{equation}
\label{tonbound}
M_3^{w}[\phi_{a,b}]\ll M^3p^{1+\eps}LPQ,\quad\quad
M_3^{w}[\phi_{k}]\ll M^3k^3p^{1+\eps}LPQ,
\end{equation}
for any $\eps>0$, where the implied constants depend on
$(a,b,N,\eps)$. 

\subsection{Conclusion}

We can now gather Lemma~\ref{lm-md}, Lemma~\ref{lm-complete} and
Proposition~\ref{pr-m2c} (choosing $a-b$ and $k$ large enough
depending on $\eps$ so that \refs{est43} holds), together
with~(\ref{eq-m3l-bound}), \refs{tribound}, \refs{parbound},
\refs{torbound} and \refs{tonbound}. We derive, under the assumptions
that \refs{p-epsilon-P} and \refs{bounded4} hold, the bound
\begin{align*}
  M(L),\ k^{-3}M(L;k)&\ll M^3\{ pLP+p^{1+\eps}LP(P+1) +p^{1+\eps}LPQ
  +p^{1/2+\eps}L^3PQ(P+Q)^2\}\\
  &\ll M^3\{p^{1+\eps}LP(P+Q) +p^{1/2+\eps}L^3PQ^2(P+Q)\}
\end{align*}
for any $\eps>0$, where the implied constant depends on $f$ and
$\eps$.
\par
Finally, we observe that if \refs{p-epsilon-P} does not hold, the
above bound remains valid by Lemma~\ref{lm-md} and
\refs{largesievebound}, and this concludes the proof of
Proposition~\ref{pr-amplified}.

\section{Distribution of twisted Hecke orbits and
  horocycles}\label{sec-distrib-twisted}

We prove in this section, the results of
Section~\ref{ssec-orbits}, using the main estimate of
Theorem~\ref{th-from-admissible-to-orthogonality} as basic tool.

\begin{proof}[Proof of Theorem~\ref{weightedshorthorocycles}]
  Let $K=K_p$ be an isotypic trace function with conductor at most $M$
  and $I=I_p\subset[1,p]$ an interval. We have to show that if
  $|I|\geq p^{7/8+\kappa}$ for some fixed $\kappa>0$, we have the limit
$$
\mutw{p}{\tau}{K,I}(\varphi) = \frac{1}{|I|}\sum_{t\in
  I}K(t)\varphi\Bigl(\frac{\tau+t}p\Bigr)\lra 0
$$
as $p\ra +\infty$, for all $\varphi$ continuous and compactly
supported on $Y_0(N)$ and all $\tau\in Y_0(N)$.  By the spectral
decomposition theorem for $Y_0(N)$, it is sufficient to prove the
result for $\varphi$ either constant function $1$, or a Maass
Hecke-eigenforms or $\varphi$ a packet of Eisenstein series.
\par
Let $\varphi=f$ be a Maass cusp form with Fourier expansion
$$
f(z)=\sum_{n \in\Zz-\{0\}} \rho_f(n)|n|^{-1/2} W_{it_f}(4 \pi
\abs{n}y)e(nx).
$$ 
\par
We can assume, by linearity, that $f$ is an eigenfunction of the
involution $z\mapsto -\bar{z}$, so that there exists $\eps_f=\pm 1$
with
\begin{equation}\label{rhosym}
  \rho_f(n)=\eps_f \rho_f(-n)
\end{equation}
for all $n\in\Zz$. We now derive the basic identity relating Hecke
orbits with the twisted sums of Fourier coefficients: we have (for
$p\geq 3$)
$$
\mutw{p}{\tau}{K,I}(f) =\frac{1}{|I|}\sum_{n}\rho_f(n)|{n}/p|^{-1/2}
W_{it_f}\Bigl(\frac{4\pi \Im(\tau)|n|}p\Bigr)
e\Bigl(\frac{n\Re(\tau)}{p}\Bigr) K'_{I}(n)
$$
with
\begin{align*}
  K'_{I}(n)&=\frac{1}{p^{1/2}}\sum_{t\in I}K(t)e\Bigl(\frac{nt}p\Bigr)
  = \frac{1}{p}\sum_{x\in [-p/2,p/2]}\hat K(n-x)\sum_{t\in
    I}e\Bigl(\frac{tx}p\Bigr) \\&= \frac{|I|}p\hat
  K(n)+\frac{1}{p}\sum_{\stacksum{|x|\leq p/2}{x\not=0}} \hat
  K(n-x)\sum_{t\in I}e\Bigl(\frac{tx}p\Bigr),
\end{align*}
where $\hat{K}$ is the unitarily-normalized Fourier transform modulo
$p$, as before.  Hence, by~(\ref{rhosym}), we get
\begin{multline}\label{eq-relation}
  \mutw{p}{\tau}{K,I}(f) =\frac{1}{p}\bigl\{\tsum_{V}(f,\hat
  K;p)+\eps_f
  \tsum_{W}(f,[\times (-1)]^*\hat K;p)\bigr\}\\
  +\frac{1}{|I|}\frac{1}{p}\sum_{\stacksum{|x|\leq
      p/2}{x\not=0}}\bigl\{\tsum_{V}(f,[-x]^*\hat K;p) +\eps_f
  \tsum_{W}(f,[-x]^*[\times (-1)]^*\hat K;p)\bigr\} 
  \sum_{t\in I}e\Bigl(\frac{tx}p\Bigr)
\end{multline}
where, for any function $L\,:\, \Fp\lra \Cc$, we denote
$$
[-x]^*L(n)=L(n-x)=L\Bigl(\begin{pmatrix}1&-x\\0&1
\end{pmatrix}n\Bigr),\quad [\times (-1)]^*L(n)=L(-n),
$$
and $V$ and $W$ are the functions (depending on $t_f$ and on $\tau$)
defined on $]0,+\infty[$ by
$$
V(x)=x^{-1/2}W_{it_f}(4\pi \Im(\tau)x)e(x\Re(\tau)),\quad\quad
W(x)=x^{-1/2}W_{it_f}(4\pi \Im(\tau)x)e(-x\Re(\tau)).
$$
\par
Let $L\,:\, \Fp\lra \Cc$ be one of the functions $[\times
(-1)]^*\hat{K}$ or $[-x]^*\hat K$ or $[-x]^*[\times(-1)]^*\hat{K}$ for
some $x\in\Fp$. By Lemma \ref{lm-weights}, Proposition
\ref{pr-bound-h1} and Proposition \ref{pr-shifted-2}, each such $L$ is
an isotypic trace functions whose conductor is bounded solely in terms
of $\cond(K)$.  Therefore we would like to apply Theorem
\ref{th-from-admissible-to-orthogonality}.

\begin{remark}
  For the rest of this section we will not necessarily display the
  dependency in $M$ or $f$ or $\tau$ of the various constants implicit
  in the Vinogradov symbols $\ll$.
\end{remark}
\par
The functions $V$ and $W$ above do not a priori satisfy a condition of
type $(V(C,P,Q))$, but it is standard to reduce to this
situation. First, we truncate the large values of $n$, observing that
since
$$
W_{it}(x)\ll e^{-x/2},
$$
where the implied constant depends on $t$ (see~\refs{Wit}), the
contribution of the terms with $n\geq p^{1+\eps}$ to any of the sums
appearing in~(\ref{eq-relation}) is
$$
\ll\exp(-p^{\eps/2}),
$$ 
for any $\eps>0$. 
\par
Then, by means of a smooth dyadic partition of the remaining interval,
the various sums $\tsum_{V}(f,L;p)$ and $\tsum_{W}(f,L;p)$ occuring in
\refs{eq-relation}, are decomposed into a sum of $O(\log p)$ sums of
the shape
$$
P^{-1/2}\tsum_{\tilde{V}}(f,L;p)
$$
where $L$ has conductor bounded in terms of $M$ only, for functions
$\tilde{V}$, depending on $\tau$ and $t_f$, which satisfy Condition
$(V(C,P,Q))$ for some sequence $C=(C_{\nu})$, and
$$
P\in\Bigl[\frac{1}2p^{-1},p^\eps\Bigr],\quad\quad Q\ll_{t_f,\eps} 1.
$$
(the normalizing factor $P^{-{1/2}}$ comes from the factorization
$(x/p)^{-1/2}=P^{-{1/2}}(x/pP)^{-1/2}$, and is introduced to ensure
that $\tilde V(x)\ll_{t_f,\eps}1$).
\par
The trivial bound for these sums is $O(P^{-1/2}Pp^{1+\eps})$ and using
\begin{equation}\label{polyavino}
  \frac{1}p\sum_{\stacksum{|x|\leq p/2}{x\not=0}}\Bigl|\sum_{t\in
    I}e\Bigl(\frac{tx}p\Bigr)\Bigr| \ll \log p,
\end{equation}
we see that the contribution to $\mutw{p}{\tau}{K,I}(f)$ of the sums
with $P\leq p^{-1/2}$ is
$$
\ll p^{3/4+\eps}\Bigl(\frac{1}p+\frac{1}{|I|}\Bigr)=o(1)
$$
provided $|I|\geq p^{3/4+2\eps}$.
\par
For the remaining sums, we use
Theorems~\ref{th-from-admissible-to-orthogonality}
and~\ref{th-interpret-admissible}: we have
$$
P^{-1/2}\tsum_{\tilde{V}}(f,L;p) \ll p^{1-\delta+\eps}
$$
for any $\delta<1/8$, where the implicit constants depend on
$(M,C,f,\tau,\delta,\eps)$. we obtain that
\begin{equation}\label{cuspbound}
  \mutw{p}{\tau}{K,I}(f)\ll
  p^{-\delta+\eps}+\frac{1}{|I|}p^{1-\delta+\eps}.
\end{equation}
\par
As long as $|I|\geq p^{7/8+\kappa}$ for some fixed $\kappa>0$, we can
take $\eps>0$ small enough and $\delta>0$ small enough so that we
above shows that $\mutw{p}{\tau}{K,I}(f)\ra 0$ as $p\ra +\infty$, as
desired.
\par
The case where $\varphi$ is a packet of Eisenstein series
$E_{\chi,g}(\varphi)$ is similar, using Proposition \ref{packet
  bound}. Indeed, the contribution of the non-zero Fourier
coefficients are handled in this manner, and the only notable
difference is that we must handle the constant term of this
packet. This is given by
\begin{equation}\label{eq-constant-term}
\rho_{\chi,g}(\varphi,0)(z)=\int_\Rr
\varphi(t)\{c_{1,g}(t)y^{1/2+it}+c_{2,g}(t)y^{1/2-it}\}dt,
\end{equation}
and contributes to $\mutw{p}{\tau}{K,I}(E_{\chi,g}(\varphi))$ by
$$
\frac{1}{|I|}\sum_{t\in
  I}K(t)\rho_{\chi,g}(\varphi,0)\Bigl(\frac{\tau+t}{p}\Bigr)
=\rho_{\chi,g}(\varphi,0)\Bigl(\frac{\tau}{p}\Bigr)
\frac{1}{|I|}\sum_{t\in
  I}K(t)=\rho_{\chi,g}(\varphi,0)\Bigl(\frac{\tau}{p}\Bigr)
\frac{p^{1/2}}{|I|}K'_I(0)
$$
since $\rho_{\chi,g}(\varphi,0)(z)$ does not depend on the real part
of $z$. We have
$$
\rho_{\chi,g}(\varphi,0)\Bigl(\frac{\tau}{p}\Bigr)\ll p^{-1/2}
$$
(since $\Im \tau/p\ll1/p$) and by \eqref{polyavino}, and the fact that
$\hat{K}$ is bounded by a constant depending only on $M$ (a
consequence of Proposition~\ref{pr-bound-h1} (1)), we have
$$
K'_I(0)\ll {\log p}
$$
and therefore the contribution of the constant terms of Eisenstein
series is bounded by
$$
\ll\frac{\log p}{|I|}=o(1).
$$
For $\varphi=1$ the exact same argument yields
$$
\mu_{K,I,\tau}(1)=\frac{p^{1/2}}{|I|}K'_I(0)\ll\frac{p^{1/2}\log
  p}{|I|}
$$ 
which is $o(1)$ as long as $|I|\geq p^{\eta}$ with $\eta>1/2$. This
concludes the proof of Theorem \ref{weightedshorthorocycles}.
\end{proof}

\begin{proof}[Proof of Corollary~\ref{cor-dist-supermorse}]
  We now consider a non-constant polynomial $\phi$ of degree
  $\deg\phi\geq 1$. The probability
  measure~(\ref{eq-short-supermorse}) satisfies
$$
\frac{1}{|I|}\sum_{\stacksum{x\in \Ff_p}{\phi(x)\in
    I}}\delta_{\Gamma_0(N)\phi(x)\cdot \tau}= \mu+
\mutw{p}{\tau}{K,I}
$$ 
where
$$
K(t)=|\{x\in\Ff_p\,\mid\, \phi(x)=t\}|-1
$$
for $t\in\Ff_p$. By \S \ref{ex-fiber-count}, $K$ is a Fourier trace
function (not necessarily isotypic), whose Fourier transform is
therefore also a Fourier trace function, given by
\begin{align*}
  \hat K(n)&
  =\frac{1}{p^{1/2}} \sum_{x\in\Ff_p}{e\Bigl(\frac{n\phi(x)}{p}\Bigr)},\ (n,p)=1\\
  \hat K(0)&=0.
\end{align*}
\par
By Proposition~\ref{pr-shifted}, we can express $\hat K$ as a sum of
at most $\deg(\phi)$ functions $\hat K_i$ which are irreducible trace
functions with conductors bounded by $M$. The contribution from the
terms $\hat K_i$ is then treated by the previous proof.
\end{proof}

\section{Trace functions}\label{sec-trace}

We now come to the setting of Section~\ref{intro-l-adic}. For an
isotypic trace function $K(n)$, we will see that the cohomological
theory of algebraic exponential sums and the Riemann Hypothesis over
finite fields provide interpretations of the sums $\wwd(K;\gamma)$,
from which it can be shown that trace functions are good.
\par
In this section, we present some preliminary results. In the next one,
we give many different examples of trace functions (isotypic or not),
and compute upper bounds for the conductor of the associated
sheaves. We then use the cohomological theory to prove
Theorem~\ref{th-interpret-admissible}.
\par
First we recall the following notation for trace functions: for a
finite field $k$, an algebraic variety $X/k$, a constructible
$\ell$-adic sheaf $\sheaf{F}$ on $X$, a finite extension $k'/k$, and a
point $x\in X(k')$, we define
$$
\frtr{\sheaf{F}}{k'}{x}=\Tr(\frob_{k'}\mid \sheaf{F}_{\bar{x}}),
$$
the trace of the geometric Frobenius automorphism of $k'$ acting on
the stalk of $\sheaf{F}$ at a geometric point $\bar{x}$ over $x$ (seen
as a finite-dimensional representation of the Galois group of $k'$;
see~\cite[7.3.7]{katz-esde}).
\par 
Now let $p$ be a prime number, and let $\ell\not=p$ be another
auxiliary prime. Let
$$
\iota\,:\, \bar{\Qq}_{\ell}\lra \Cc
$$
be a fixed isomorphism, and let $\sheaf{F}$ be an $\ell$-adic
constructible Fourier sheaf on $\Aa^1_{\Ff_p}$ (in the sense of
Katz~\cite[Def.  8.2.1.2]{katz-esde}).  Recall that we consider the
functions
$$
  K(x)=\iota(\frtr{\sheaf{F}}{\Ff_p}{x})
$$
for $x\in \Ff_p=\Aa^1(\Ff_p)$. We also consider the (Tate-twisted)
Fourier transform $\sheaf{G}=\ft_{\psi}(\sheaf{F})(1/2)$ with respect
to an additive $\ell$-adic character $\psi$ of $\Ff_p$. It satisfies
\begin{equation}\label{eq-fourier-transform}
  \frtr{\sheaf{G}}{k}{v}=
  -\frac{1}{|k|^{1/2}}\sum_{x\in k}{\frtr{\sheaf{F}}{k}{x}\psi(\Tr_{k/\Ff_p}(vx))}
\end{equation}
for any finite extension $k/\Ff_p$ and $v\in k=\Aa^1(k)$
(see~\cite[Th. 7.3.8, (4)]{katz-esde}).
\par
We collect here the basic properties of Fourier sheaves and of the
Fourier transform, consequences of works of Deligne, Laumon, Brylinski
and Katz (see~\cite[\S 7.3.5]{katz-esde}, \cite[Th. 8.2.5
(3)]{katz-gkm} and~\cite[Th. 8.4.1]{katz-gkm}).

\begin{lemma}[Fourier sheaves]\label{lm-weights}
Let $p$ and $\ell\not=p$ be primes, and let $\sheaf{F}$ be an
$\ell$-adic Fourier sheaf on $\Aa^1_{\Ff_p}$.
\par
\emph{(1)} The sheaf $\sheaf{F}$ is a middle-extension sheaf: if
$j\,:\, U\injecte \Aa^1$ is the open immersion of a non-empty open set
on which $\sheaf{F}$ is lisse, we have
$$
\sheaf{F}\simeq j_*(j^*\sheaf{F}).
$$
\par
\emph{(2)} Suppose that $\sheaf{F}$ is pointwise
$\iota$-pure\footnote{\ On the maximal open set on which it is lisse.} of
weight $0$, i.e., that it is a trace sheaf. Then
\begin{itemize}
\item[-] $\sheaf{G}=\ft_{\psi}(\sheaf{F})(1/2)$ is pointwise $\iota$-pure\footnote{idem} of weight $0$;
\item[-] at the points $v\in\Aa^1$
where $\sheaf{G}$ is not lisse, it is pointwise
mixed of weights $\leq 0$, i.e., for any finite field $k$ with $v\in
k$, the eigenvalues of the Frobenius of $k$ acting on the stalk of
$\sheaf{G}$ at a geometric point $\bar{v}$ over $v$ are $|k|$-Weil
numbers of weight at most $0$.
\end{itemize}
\par
\emph{(3)} If $\sheaf{F}$ is geometrically isotypic
(resp. geometrically irreducible) then the Fourier transform
$\sheaf{G}$ is also geometrically isotypic (resp. geometrically
irreducible).
\end{lemma}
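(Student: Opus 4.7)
The plan is to deduce the three statements from the standard theory of the $\ell$-adic Fourier transform developed by Deligne, Laumon, Brylinski and Katz, so the ``proof'' is largely a matter of locating the right reference or chaining together two or three standard facts. I would structure the argument as follows.

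For part (1), I would appeal directly to the definition of a Fourier sheaf in \cite[Def.~8.2.2]{katz-gkm}: a Fourier sheaf is by convention a middle-extension sheaf on $\Aa^1$ having no geometric sub- or quotient sheaf that is an Artin-Schreier sheaf $\sheaf{L}_{\psi(ax+b)}$ (equivalently, one for which $\ft_\psi$ is again a middle-extension sheaf in degree $0$). Part (1) is then a tautology; no work is needed beyond recalling the definition.

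For part (2), the key input is the main theorem on Fourier transform of pointwise pure middle-extension sheaves. First, since $\sheaf{F}$ is a Fourier sheaf, Laumon's theory (see \cite[Th.~7.3.8 and Th.~7.4.4]{katz-esde}) tells us that $\ft_\psi(\sheaf{F})[1]$ is, up to a shift, again a middle-extension Fourier sheaf; combined with Deligne's estimates in \cite{weilii} (applied through Laumon's stationary phase analysis), if $\sheaf{F}$ is pointwise $\iota$-pure of weight~$0$ on its open set of lisseness, then $\ft_\psi(\sheaf{F})$ is pointwise $\iota$-pure of weight~$1$ on its own open set of lisseness. The Tate half-twist shifts weights by $-1$, so $\sheaf{G}=\ft_\psi(\sheaf{F})(1/2)$ is pointwise $\iota$-pure of weight~$0$ where lisse. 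At the finitely many singularities, the stalk of the middle-extension $\sheaf{G}$ is the inertia-invariant subspace of the generic stalk; as the invariants of a pure representation under any quotient are always of weight $\leq$ the weight of the ambient representation (this is a standard consequence of Weil~II, see \cite[1.8.1]{weilii}), we obtain the mixedness with weights $\leq 0$ at the singular points.

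For part (3), I would invoke the involutivity of the Fourier transform on the category of Fourier sheaves: up to geometric isomorphism and a Tate twist, one has $\ft_{\bar\psi}\circ\ft_\psi(\sheaf{F})\simeq [\times(-1)]^*\sheaf{F}$ (see \cite[Th.~8.4.1]{katz-gkm}). This makes $\ft_\psi$ an equivalence between categories of Fourier sheaves which in particular commutes with the formation of geometric Jordan--H\"older components. Consequently if $\sheaf{F}$ is geometrically irreducible, so is $\sheaf{G}$; and if $\sheaf{F}$ is geometrically isotypic, i.e.\ geometrically isomorphic to a sum $\sheaf{F}_0^{\oplus r}$ for an irreducible Fourier sheaf $\sheaf{F}_0$, then $\sheaf{G}\simeq \ft_\psi(\sheaf{F}_0)^{\oplus r}(1/2)$ is again isotypic with irreducible component $\ft_\psi(\sheaf{F}_0)(1/2)$. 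The only mild subtlety worth flagging is the need to verify that Fourier transform commutes with direct sums (in this shifted/twisted sense) on Fourier sheaves, which follows from the fact that for Fourier sheaves the derived Fourier transform is concentrated in a single degree; once this is granted, the argument is a formality.
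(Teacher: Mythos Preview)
Your proposal is correct and matches the paper's approach: the paper gives no proof at all for this lemma, merely prefacing it with the remark that these are ``consequences of works of Deligne, Laumon, Brylinski and Katz'' together with the references \cite[\S 7.3.5]{katz-esde}, \cite[Th.~8.2.5~(3)]{katz-gkm} and \cite[Th.~8.4.1]{katz-gkm}. Your write-up is simply a more explicit unpacking of those citations, and the logical structure (definition for (1), purity plus the weight-drop on inertia invariants for (2), Fourier inversion/equivalence for (3)) is exactly what those references contain.
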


We defined the conductor of a sheaf in
Definition~\ref{def-conductor}. An important fact is that this
invariant also controls the conductor of the Fourier transform, and
that it controls the dimension of cohomology groups which enter into
the Grothendieck-Lefschetz trace formula. We state suitable versions
of these results:

\begin{proposition}\label{pr-bound-h1}
  Let $p$ be a prime number and $\ell\not=p$ an auxiliary prime.
\par
\emph{(1)} Let $\sheaf{F}$ be an $\ell$-adic Fourier sheaf on
$\Aa^1_{\Fp}$, and let $\sheaf{G}=\ft_{\psi}(\sheaf{F})(1/2)$ be its
Fourier transform.  Then, for any $\gamma\in\GL_2(\Fp)$, the
analytic conductor of $\gamma^*\sheaf{G}$ satisfies
\begin{equation}\label{eq-bound-conds}
  \cond(\gamma^*\sheaf{G})\leq 10\cond(\sheaf{F})^2.
\end{equation}
\par
\emph{(2)} For $\sheaf{F}_1$ and $\sheaf{F}_2$ lisse $\ell$-adic
sheaves on an open subset $U\subset \Aa^1$, we have 
$$
\dim H^1_c(U\times\bar{\Ff}_p,\sheaf{F}_1\otimes\sheaf{F}_2) \leq
r_1r_2(1+m+\cond(\mcF_1)+\cond(\mcF_2)),
$$
where
$$
m=|(\Pp^1-U)(\bar{\Ff}_p)|,
\quad\quad
r_i=\rank(\sheaf{F}_i).
$$
\par
\emph{(3)} Let $\sheaf{F}_1$ and $\sheaf{F}_2$ be middle-extension
$\ell$-adic sheaves on $\Aa^1_{\Fp}$. Then
\begin{equation}\label{eq-cond-tensor}
\cond(\sheaf{F}_1\otimes\sheaf{F}_2)\leq
5\cond(\sheaf{F}_1)^2\cond(\sheaf{F}_2)^2. 
\end{equation}
\end{proposition}

Note that~(\ref{eq-bound-conds}) and~(\ref{eq-cond-tensor}) can
certainly be improved, but these bounds will be enough for us.

\begin{proof}
  (1) Since $\gamma$ is an automorphism of $\Pp^1$, we have
  $\cond(\gamma^*\sheaf{G})=\cond(\sheaf{G})$ and we can assume
  $\gamma=1$.
\par
We first bound the number of singularities
$$
n(\sheaf{G})=|\Pp^1-U|
$$
of $\sheaf{G}$. By~\cite[Cor. 8.5.8]{katz-gkm} (and the remark in its
proof), on $\Gg_m$, the Fourier transform is lisse except at points
corresponding to Jordan-Hölder components of the local representation
$\sheaf{F}(\infty)$ at $\infty$ which have unique break equal to
$1$. The number of these singularities outside of $0$, $\infty$ is
therefore bounded by the rank of $\sheaf{F}$, hence by the conductor
of $\sheaf{F}$, and
\begin{equation}\label{eq-sing-g}
n(\sheaf{G})\leq 2+\rank(\sheaf{F})\leq 3\cond(\mcF).
\end{equation}
\par
Now we bound the rank of $\sheaf{G}$. This is given by~\cite[Lemma
7.3.9 (2)]{katz-esde}, from which we get immediately
$$
\rank(\sheaf{G})\leq
\sum_{\lambda}\max(0,\lambda-1)
+\sum_{x}{(\swan_x(\sheaf{F})+\rank(\sheaf{F}))}
$$
where $\lambda$ runs over the breaks of $\sheaf{F}(\infty)$, and $x$
over the singularities of $\sheaf{F}$ in $\Aa^1$.  The first term is
$\leq \swan_{\infty}(\sheaf{F})$, so that the rank of $\sheaf{G}$
is bounded by
\begin{equation}\label{eq-rank-g}
  \rank(\sheaf{G})\leq \swan(\sheaf{F})+
  \rank(\sheaf{F})n(\sheaf{F})\leq \cond(\mcF)^2. 
\end{equation}
\par
Thus it only remains to estimate the Swan conductors
$\swan_x(\sheaf{G})$ at each singularity. We do this using the local
description of the Fourier transform, due to Laumon~\cite{laumon},
separately for $0$, $\infty$ and points in $\Gg_m$.
\par
\underline{First case.} 
Let $x=\infty$. By~\cite[Cor. 7.4.2]{katz-esde} we can write
$$
\sheaf{G}(\infty)=N_0\oplus N_{\infty}\oplus N_{m}
$$
as representations of the inertia group at $\infty$, where $N_0$,
$N_{\infty}$ are the local Fourier transform functors denoted
$$
\ft_{\psi}\mathrm{loc}(\infty,\infty)\sheaf{F}(\infty),\quad
\ft_{\psi}\mathrm{loc}(0,\infty)(\sheaf{F}(0)/\sheaf{F}_0)
$$
in loc. cit., and $N_m$ is the sum of the similar contributions of the
local Fourier transforms at all $s\in \Gg_m$. Let $s_0$, $s_{\infty}$
and $s_m$ denote the corresponding Swan conductors, which add up to
$\swan_{\infty}(\sheaf{G})$. By~\cite[Cor. 7.4.1.1]{katz-esde}, all
breaks of $N_0$ and $N_m$ are $\leq 1$, hence by \refs{eq-rank-g}
$$
s_0+s_m\leq \dim(N_0)+\dim (N_m)\leq \rank(\sheaf{G}) \leq
\swan(\sheaf{F})+\rank(\sheaf{F})n(\sheaf{F}).
$$
\par
As for $s_{\infty}$, by a further result of Laumon~\cite[Th. 7.5.4
(1)]{katz-esde}, the contribution $s_{\infty}$ is equal to the similar
contribution of breaks $>1$ to the Swan conductor
$\swan_{\infty}(\sheaf{F})$. Hence by \refs{eq-rank-g}
\begin{equation}\label{eq-swinf-g}
  \swan_{\infty}(\sheaf{G})\leq 2\swan(\sheaf{F})+
  \rank(\sheaf{F})n(\sheaf{F})\leq 2\cond(\mcF)^2.
\end{equation}
\par
\underline{Second case.} Let $x=0$. Then, by~\cite[Th. 7.5.4
(5)]{katz-esde}, the Swan conductor $\swan_0(\sheaf{G})$ is equal to
the contribution to $\swan_{\infty}(\sheaf{F})$ of the breaks in
$]0,1[$, so that
\begin{equation}\label{eq-swz-g}
\swan_0(\sheaf{G})\leq \swan_{\infty}(\sheaf{F})\leq\cond(\mcF).
\end{equation}
\par
\underline{Third case.} Let $x\in\Gg_m$. By translation, we have
$$
\swan_x(\sheaf{G})=
\swan_0(\ft_{\psi}(\sheaf{F}\otimes\sheaf{L}_{\psi(xX)})),
$$
so that the previous case gives
$$
\swan_x(\sheaf{G})\leq
\swan_\infty(\sheaf{F}\otimes\sheaf{L}_{\psi(xX)})\leq
\rank(\mcF)+\swan_\infty(\mcF)\leq\cond(\mcF).
$$
\par
\par
By~(\ref{eq-sing-g}) and~(\ref{eq-rank-g}), this leads to
$$
\sum_x\swan_x(\sheaf{G})\leq 2\cond(\mcF)^2+3\cond(\mcF)^2=5\cond(\mcF)^2,
$$
and
$$
\cond(\sheaf{G})\leq 10 \cond(\sheaf{F})^2.
$$
\par
(2) We use the Euler-Poincar\'e formula: for a lisse $\ell$-adic sheaf
$\sheaf{M}$ on an affine curve $U\subset \Pp^1$ over $\Fp$, we have
\begin{equation}\label{eq-euler-poincare}
  \dim H^1_c(U\times\bar{\Ff}_p,\sheaf{M})
   =  \dim H^2_c(U\times\bar{\Ff}_p,\sheaf{M})+
  \rank(\sheaf{M})(-\chi_c(U\times\bar{\Ff}_p))
  +{\swan(\sheaf{M})}
\end{equation}
(see~\cite[2.3.1]{katz-gkm}).
\par
We apply this formula to $\sheaf{M}=\sheaf{F}_1\otimes\sheaf{F}_2$.
Since $H^2_c(U\times\bar{\Ff}_p,\sheaf{M})$ is the space of
co-invariants of $\sheaf{M}$
$$
\dim H^2_c(U\times\bar{\Ff}_p,\sheaf{M})\leq r_1r_2.
$$
\par
For the second term, we note simply that
$$
\rank(\sheaf{M})(-\chi_c(U\times\bar{\Ff}_p))\leq mr_1r_2.
$$
\par
For the last term, we bound the Swan conductor at $x\in \Pp^1-U$ of
$\sheaf{F}_1\otimes\sheaf{F}_2$ in terms of those of the factors. The
existence of such a bound is a well-known result: if $\lambda_1$
(resp. $\lambda_2$) is the largest break of $\sheaf{F}_1$
(resp. $\sheaf{F}_2$) at $x$, then all breaks of
$\sheaf{F}_1\otimes\sheaf{F}_2$ at $x$ are at most
$$
\max(\lambda_1,\lambda_2)\leq \max(\swan_x(\sheaf{F}_1),\swan_x(\sheaf{F}_2)),
$$
(see~\cite[Lemma 1.3]{katz-gkm}) and hence
$$
  \swan_x(\sheaf{F}_1\otimes\sheaf{F}_2)\leq
  \rank(\sheaf{F}_1)\rank(\sheaf{F}_2)
  (\swan_x(\sheaf{F}_1)+\swan_x(\sheaf{F}_2))
$$
and
\begin{equation}\label{eq-swan-sing}
  \swan(\mcF_1\otimes\mcF_2)\leq 
  r_1r_2(\swan(\sheaf{F}_1)+\swan(\sheaf{F}_2)).
\end{equation}
\par
Adding this to the previous contribution, we get
$$
\dim H^1_c(U\times\bar{\Ff}_p,\sheaf{M})
\leq r_1r_2(1+m+\cond(\mcF_1)+\cond(\mcF_2)),
$$
as claimed.
\par
(3) Let $c_i=\cond(\sheaf{F}_i)$, $r_i=\rank(\sheaf{F}_i)$ and $n_ia$
the number of singularities of $\sheaf{F}_i$. The rank of
$\sheaf{F}_1\otimes\sheaf{F}_2$ is $r_1r_2$, and it has $\leq n_1+n_2$
singularities. By~(\ref{eq-swan-sing}), we have also
$$
\swan(\sheaf{F}_1\otimes\sheaf{F}_2)\leq
r_1r_2(\swan(\sheaf{F}_1)+\swan(\sheaf{F}_2)).
$$
The result follows by the roughest estimate:
$$
\cond(\sheaf{F}_1\otimes\sheaf{F}_2)\leq 
c_1c_2+c_1+c_2+c_1c_2(c_1+c_2)\leq 5c_1^2c_2^2.
$$
\end{proof}

We can also explain here how to deal with Fourier trace functions which
are not necessarily isotypic.

\begin{proposition}\label{pr-shifted}
  Let $p$ be a prime number, $\ell\not=p$ an auxiliary prime. Let
  $\sheaf{F}$ be a Fourier trace sheaf modulo $p$ with conductor $\leq
  M$.
\par
There exist at most $\rank(\sheaf{F})$ isotypic trace sheaves
$\sheaf{F}_i$ modulo $p$, each with conductor $\leq M$, such that
$$
\frtr{\sheaf{F}}{\Fp}{x}= \sum_{i}{\frtr{\sheaf{F}_i}{\Fp}{x}}
$$
for all $x\in \Fp$.  In particular, for any $s\geq 1$, the trace
function 
$$
K(n)=\iota(\frtr{\sheaf{F}}{\Fp}{n})
$$
satisfies $\tnorm{K}{s}\leq M^{s+1}$.
\end{proposition}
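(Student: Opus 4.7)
The strategy is to decompose $\sheaf{F}$ into its geometrically isotypic constituents by combining arithmetic semisimplification with a Clifford-theoretic analysis, then showing that the non-isotypic pieces contribute zero to the trace function on $\Fp$. Let $j: U \hookrightarrow \Aa^1_{\Fp}$ denote the inclusion of the maximal dense open subset on which $\sheaf{F}$ is lisse, and write $V := \sheaf{F}|_U$, viewed as a continuous $\bar{\Qq}_\ell$-representation of $\pi_1^{\mathrm{arith}}(U)$. Replacing $V$ by its arithmetic semisimplification $V^{ss}$ (which preserves the Frobenius trace function on $U(\Fp)$), we decompose $V^{ss} = \bigoplus_k V_k$ into arithmetic irreducible constituents.

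By Clifford theory applied to the normal inclusion $\pi_1^{\mathrm{geom}}(U) \triangleleft \pi_1^{\mathrm{arith}}(U)$, the restriction of each $V_k$ to $\pi_1^{\mathrm{geom}}(U)$ decomposes as a direct sum $\bigoplus_{i \in O_k} \rho_i^{n_k}$ of distinct irreducible representations $\rho_i$ forming a single Frobenius orbit $O_k$, each appearing with a common multiplicity $n_k$. If $|O_k| = 1$, then $V_k$ is already geometrically isotypic; I set $\sheaf{F}_k := j_*(V_k)$, which is a Fourier trace sheaf (a middle extension by construction, pointwise pure of weight $0$ as a subsheaf of the pure $V^{ss}$, and Fourier since $\sheaf{F}$ is), and geometrically isotypic by choice. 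If $|O_k| = d > 1$, then for every $x \in \Aa^1(\Fp)$ the arithmetic Frobenius $\frob_x$ cyclically permutes the $d$ geometric isotypic pieces of $V_k$, and likewise acts on the inertia invariants $V_k^{I_x}$ at singular points; the corresponding block permutation matrix has no diagonal blocks, so its trace vanishes. Hence such $V_k$'s can be discarded without affecting $\frtr{V^{ss}}{\Fp}{x}$.

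Collecting the retained summands $\{\sheaf{F}_i\}$ (those with $|O_k| = 1$), we obtain the desired decomposition $\frtr{\sheaf{F}}{\Fp}{x} = \sum_i \frtr{\sheaf{F}_i}{\Fp}{x}$. The number of summands is at most the number of arithmetic irreducible constituents, hence at most $\rank(V) \leq \cond(\sheaf{F}) \leq M$. For each retained summand, the rank is bounded by $\rank(\sheaf{F})$, the singular set is contained in $\Pp^1 \setminus U$ (of cardinality $\leq n(\sheaf{F})$), and Swan conductors are additive over direct summands, giving $\swan(\sheaf{F}_i) \leq \swan(\sheaf{F})$; altogether $\cond(\sheaf{F}_i) \leq \cond(\sheaf{F}) \leq M$. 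The trace norm estimate is immediate from the definition: $\tnorm{K}{s} \leq \sum_i \cond(\sheaf{F}_i)^s \leq M \cdot M^s = M^{s+1}$.

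The most delicate point I expect to require care is justifying that the middle extensions $j_*(V_k)$ faithfully transport the trace-function decomposition through the singular fiber, i.e., that arithmetic semisimplification and the discarding of non-isotypic $V_k$'s are compatible with taking inertia invariants. This rests on purity: by Deligne's Weil II, $V|_{\pi_1^{\mathrm{geom}}}$ is already geometrically semisimple, so arithmetic semisimplification does not change the $\pi_1^{\mathrm{geom}}$-structure, and in particular the subspaces $V^{I_x}$ and $(V^{ss})^{I_x}$ coincide at each singular point $x$. Combined with Frobenius semisimplicity on pure stalks, this ensures that the cyclic-permutation argument on $V_k^{I_x}$ produces a genuine zero rather than being corrupted by nilpotent contributions, so the identity of trace functions extends from $U(\Fp)$ to all of $\Aa^1(\Fp)$.
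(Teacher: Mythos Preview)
Your proof is correct and follows essentially the same approach as the paper: both take the arithmetic semisimplification on the open set $U$, observe that arithmetically irreducible constituents are either geometrically isotypic or induced from a proper finite-index subgroup (your Clifford-theory formulation with Frobenius permuting the isotypic blocks is equivalent to the paper's induced-representation language), and discard the latter since their Frobenius traces vanish. Your treatment of the singular points via geometric semisimplicity of pure sheaves is in fact more detailed than the paper's, which simply asserts the extension ``by properties of middle-extension sheaves'' and credits Katz for the induced-at-stalks argument; note however that your appeal to ``Frobenius semisimplicity on pure stalks'' is unnecessary, since the block-permutation structure already forces the trace to vanish regardless of any nilpotent part.
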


\begin{proof}
We refer to~\cite[\S 4.4--4.6]{katz-sommes} for basic facts concerning
the correspondance between middle-extension sheaves on $\Aa^1_{\Fp}$
and representations of the \'etale fundamental group.
\par
Let $j:U\injecte \Aa^1$ be an open dense subset, defined over $\Fp$,
such that $\sheaf{F}$ is lisse on $U$, and let $G=\pi_1(U,\bar{\eta})$
and
$$
\rho\,:\, G\lra \GL(V)
$$
the $\ell$-adic representation corresponding to the restriction of
$\sheaf{F}$ on $U$. Let
$$
\rho^{ss}=\bigoplus_{i\in I}{\rho_i}
$$
be the semisimplification of this representation, where $\rho_i$ is an
irreducible representation of $G$. We denote by $\tilde{\sheaf{F}}_i$
the corresponding lisse sheaf on $U$, and let
$\sheaf{F}_i=j_*\tilde{\sheaf{F}}_i$. Then each $\sheaf{F}_i$ is a
Fourier sheaf modulo $p$, with conductor $\leq M$, and we have
\begin{equation}\label{eq-sum-i}
\frtr{\sheaf{F}}{k}{x}=\sum_{i\in I}{\frtr{\sheaf{F}_i}{k}{x}}
\end{equation}
for any finite extension $k/\Fp$ and $x\in k$. Indeed, this holds by
definition for $x\in U(k)$, and this extends to all $x$ by properties
of middle-extension sheaves (see
Proposition~\ref{pr-geo-isom-criterion}).
\par
Each $\rho_i$ is arithmetically irreducible, and there are two
possibilities concerning its restriction $\rho_i^g$ to
$G^g=\pi_1(U\times\bar{\Ff}_p,\bar{\eta})$: (1) either $\rho_i^g$ is
isotypic, and hence $\sheaf{F}_i$ is an isotypic trace sheaf; or (2)
there exists an integer $m\geq 2$, and a representation $\tau_i$ of
the proper normal subgroup $H=\pi_1(U\times\Ff_{p^m},\bar{\eta})$ of
$G$ such that
$$
\rho_i=\Ind_H^{G}\tau_i
$$ 
(see, e.g.,~\cite[Prop. 8.1]{serre-lin}
or~\cite[Prop. 2.8.20]{kow-rt}).  We claim that in this second case,
the trace function of $\sheaf{F}_i$ is identically zero on $\Fp$,
which finishes the proof since we can then drop $\sheaf{F}_i$ from the
decomposition~(\ref{eq-sum-i}).
\par
To check the claim, note that the formula for the character of an
induced representation shows that
$$
\Tr\rho_i(g)=0
$$ 
for any $g\notin H$ (see, e.g.~\cite[Prop. 2.7.43]{kow-rt}). Hence the
trace function vanishes obviously on $U(\Fp)$ since the Frobenius
elements associated to $x\in U(\Fp)$ relative to $\Fp$ are not in $H$.
\par
This property extends to $x\in (\Aa^1-U)(\Fp)$ by a similar argument
(we thank N. Katz for explaining this last point; note that we could
also treat separately the points in $\Aa^1-U$, which would lead at
most to slightly worse bounds for the trace norm of $K$).
\par
Let $\tilde{G}=\pi_1(\Aa^1,\bar{\eta})$ be the fundamental group of
the affine line. There is a surjective homomorphism
$$
\tilde{G}\lra G.
$$
\par
The group $\tilde{G}$ contains as normal subgroups
$$
\tilde{G}^g=\pi_1(\Aa^1\times\bar{\Ff}_p,\bar{\eta}),\quad\quad
\tilde{H}=\pi_1(\Aa^1\times\Ff_{p^m},\bar{\eta}),
$$
with corresponding surjective morphisms $\tilde{G}^g\lra G^g$ and
$\tilde{H}\lra H$. 
\par
Composing these with $\tau_i$ and $\rho_i$ gives representations
$\tilde{\tau}_i$ and $\tilde{\rho}_i$ of $\tilde{H}$ and $\tilde{G}$,
respectively, with
$\tilde{\rho}_i=\Ind_{\tilde{H}}^{\tilde{G}}\tilde{\tau}_i$. 
\par
The stalk of $\sheaf{F}_i$ at a geometric point above $x\in
(\Aa^1-U)(\Fp)$ is isomorphic, as a vector space with the action of
the Galois group of $\Fp$, to the invariant space $\rho_i^{I_x}$ under
the inertia subgroup at $x$, which is a subgroup $I_x$ of $\tilde{G}$.
\par
The space of $\tilde{\rho}_i$ can be written as a direct sum
$$
\bigoplus_{\sigma\in \tilde{G}/\tilde{H}} W_{\sigma}
$$
where the spaces $W_{\sigma}$ are $\tilde{H}$-stable and permuted by
$\tilde{G}$. 
Moreover, any $g\in G-H$ permutes the $W_{\sigma}$ without fixed
points, because $H$ is normal in $G$.
\par
The point is that since $I_x\subset \tilde{G}^g\subset \tilde{H}$ (the
inertia group is a subgroup of the geometric Galois group) and each
$W_{\sigma}$ is $\tilde{H}$-stable, we have
$$
\tilde{\rho}_i^{I_x}=\bigoplus_{\sigma\in G/H}W_{\sigma}^{I_x}.
$$
(in other words, this shows that $\tilde{\rho}_i^{I_x}\simeq
\Ind_{\tilde{H}}^{\tilde{G}}\tilde{\tau}_i^{I_x}$).
\par
The matrix representing the action on $\tilde{\rho}_i^{I_x}$ of any
element $g$ in the decomposition group $D_x$ mapping to the Frobenius
conjugacy class at $x$ in $D_x/I_x$ is block-diagonal with respect to
this decomposition. Since $g\notin \tilde{H}$, this block-diagonal
matrix has zero diagonal blocks, hence its trace, which is the value
of the trace function of $\sheaf{F}_i$ at $x$, also vanishes.
\end{proof}

The following is relevant to Theorem~\ref{weightedshorthorocycles}. 

\begin{proposition}\label{pr-shifted-2}
  Let $p$ be a prime number, $\ell\not=p$ an auxiliary prime. Let
  $\sheaf{F}$ be an $\ell$-adic Fourier trace sheaf modulo $p$ with
  conductor $\leq N$. Let $K(n)$ be the corresponding Fourier trace
  function. Then, for any $x\in\Ff_p$, $[+x]^*K(n)=K(x+n)$ defines a
  Fourier trace function associated to the sheaf
$$
\sheaf{F}^{(x)}=\begin{pmatrix}1&x\\0&1
\end{pmatrix}^*\sheaf{F},
$$
and we have $\cond(\sheaf{F}^{(x)})=\cond(\sheaf{F})\leq N$ for all
$x\in\Ff_p$.
\end{proposition}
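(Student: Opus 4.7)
The plan is to verify the three claims—the trace function identity, the Fourier trace sheaf property, and the conductor bound—by unwinding the definitions, since translation by $x$ on $\Aa^1_{\Fp}$ is an automorphism of $\Pp^1$ (fixing $\infty$). Denote $\tau_x = \bigl(\begin{smallmatrix}1&x\\0&1\end{smallmatrix}\bigr)\in\PGL_2(\Fp)$, acting on $\Aa^1$ by $y\mapsto y+x$, and set $\sheaf{F}^{(x)} = \tau_x^*\sheaf{F}$.

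First I would establish the trace identity. Since $\tau_x$ is an $\Fp$-automorphism of $\Aa^1$, for any $n \in \Fp$ we have a canonical isomorphism $(\sheaf{F}^{(x)})_{\bar{n}} \simeq \sheaf{F}_{\overline{n+x}}$ compatible with the Frobenius action (as $n+x\in\Fp$), giving
\[
\frtr{\sheaf{F}^{(x)}}{\Fp}{n} = \frtr{\sheaf{F}}{\Fp}{n+x},
\]
so $K(n+x) = \iota(\frtr{\sheaf{F}^{(x)}}{\Fp}{n})$ is indeed the trace function of $\sheaf{F}^{(x)}$.

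Next I would verify that $\sheaf{F}^{(x)}$ is again a Fourier trace sheaf. Middle-extension is preserved by pullback under an open immersion composed with an automorphism (so if $j\colon U\hookrightarrow\Aa^1$ is the lisse locus of $\sheaf{F}$, then $U-x$ is the lisse locus of $\sheaf{F}^{(x)}$ and $\sheaf{F}^{(x)} = j'_*(j'^*\sheaf{F}^{(x)})$ for $j'$ the corresponding open immersion). Pointwise $\iota$-purity of weight $0$ on the lisse locus is obviously preserved. For the Fourier sheaf condition, one checks that $\tau_x^*\sheaf{L}_\psi \simeq \sheaf{L}_{\psi(y+x)} \simeq \sheaf{L}_\psi\otimes\sheaf{L}_{\psi(x)}$ geometrically, and since $\sheaf{L}_{\psi(x)}$ is a geometrically constant rank-one sheaf (the constant $x$ lies in $\Fp$), pullback by $\tau_x$ sends Artin-Schreier sheaves to Artin-Schreier sheaves. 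Hence $\sheaf{F}^{(x)}$ has no geometric Artin-Schreier subsheaf, i.e., it is a Fourier sheaf in Katz's sense.

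Finally, I would verify that $\cond(\sheaf{F}^{(x)}) = \cond(\sheaf{F})$. The rank is clearly unchanged. Since $\tau_x$ extends to an automorphism of $\Pp^1$ (fixing $\infty$), it induces a bijection between the set of singularities of $\sheaf{F}$ and those of $\sheaf{F}^{(x)}$, so $n(\sheaf{F}^{(x)})=n(\sheaf{F})$. At each singular point $s$, the inertia representation of $\sheaf{F}^{(x)}$ at $s$ is isomorphic, via the automorphism induced by $\tau_x$ on the completed local ring, to the inertia representation of $\sheaf{F}$ at $s+x$ (with $\infty+x=\infty$), so the local Swan conductors agree: $\swan_s(\sheaf{F}^{(x)}) = \swan_{s+x}(\sheaf{F})$. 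Summing gives $\swan(\sheaf{F}^{(x)})=\swan(\sheaf{F})$, and hence $\cond(\sheaf{F}^{(x)})=\cond(\sheaf{F})\leq N$. There is no real obstacle here—the only point requiring some care is noting that $x\in\Fp$ (not merely $\bar\Fp$) is what makes the translation $\tau_x$ defined over $\Fp$, so that Frobenius on the stalk at $n\in\Fp$ matches Frobenius on the stalk at $n+x\in\Fp$.
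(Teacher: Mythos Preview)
Your proof is correct and takes the same approach as the paper, which simply asserts that ``it is clear that $\sheaf{F}^{(x)}$ has the right trace function and that it is a Fourier trace sheaf, with the same conductor as $\sheaf{F}$.'' You have supplied the routine verification the authors omit.
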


\begin{proof}
  It is clear that $\sheaf{F}^{(x)}$ has the right trace function and
  that it is a Fourier trace sheaf, with the same conductor as
  $\sheaf{F}$.
\end{proof}

Finally, we state a well-known criterion for geometric isomorphism of
sheaves, that says that two irreducible middle-extension sheaves are
geometrically isomorphic if their trace functions are equal on
$\Aa^1(\bar{\Ff}_p)$ ``up to a constant depending on the definition
field''. Precisely:

\begin{proposition}[Geometric isomorphism
  criterion]\label{pr-geo-isom-criterion} 
  Let $k$ be a finite field, and let $\sheaf{F}_1$ and $\sheaf{F}_2$
  be geometrically irreducible $\ell$-adic sheaves, lisse on a
  non-empty open set $U/k$ and pointwise pure of weight $0$. Then
  $\sheaf{F}_1$ is geometrically isomorphic to $\sheaf{F}_2$ if and
  only if there exists $\alpha\in\bar{\Qq}_{\ell}^{\times}$ such that
  for all finite extensions $k_1/k$, we have
\begin{equation}\label{eq-isom-criterion}
  \frtr{\sheaf{F}_1}{k_1}{x}=\alpha^{[k_1:k]}\frtr{\sheaf{F}_2}{k_1}{x}
\end{equation}
for all $x\in U(k_1)$.
\par
In particular, if $\sheaf{F}_1$ and $\sheaf{F}_2$ are irreducible
Fourier sheaves, they are geometrically isomorphic if and only if
there exists $\alpha\in\bar{\Qq}_{\ell}^{\times}$ such that for all
finite extensions $k_1/k$, we have
\begin{equation}\label{eq-isom-criterion-2}
\frtr{\sheaf{F}_1}{k_1}{x}=\alpha^{[k_1:k]}\frtr{\sheaf{F}_2}{k_1}{x}
\end{equation}
for all $x\in k_1$.
\end{proposition}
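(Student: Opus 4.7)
The proof has two directions: an algebraic ``only if'' and an ``if'' that relies on Chebotarev density for the arithmetic fundamental group.

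For the ``only if'' direction, suppose that $\sheaf{F}_1$ and $\sheaf{F}_2$ are geometrically isomorphic over $U$, and write $\rho_i$ for the $\ell$-adic representation of $\pi_1(U,\bar\eta)$ attached to $\sheaf{F}_i|_U$. Since both restrictions to the geometric fundamental group $\pi_1(U_{\bar k},\bar\eta)$ are irreducible and mutually isomorphic, Schur's lemma shows that the space
$$
V=\Hom_{\pi_1(U_{\bar k},\bar\eta)}(\sheaf{F}_2,\sheaf{F}_1)
$$
is a one-dimensional $\bar\Qq_\ell$-vector space. The arithmetic Frobenius $\frob_k$ acts on $V$ by conjugation $\phi\mapsto \rho_1(\frob_k)\phi\rho_2(\frob_k)^{-1}$, hence by a scalar $\alpha\in\bar\Qq_\ell^\times$. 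Choosing a generator $\phi$ of $V$ gives the intertwining relation $\rho_1(\frob_k)\phi=\alpha\,\phi\,\rho_2(\frob_k)$. For $x\in U(k_1)$, the Frobenius class $\frob_{k_1,x}$ projects to $\frob_k^{[k_1:k]}$ in $\pi_1(\spec k)=\hat\Zz$, so iterating yields $\phi_{\bar x}^{-1}\rho_1(\frob_{k_1,x})\phi_{\bar x}=\alpha^{[k_1:k]}\rho_2(\frob_{k_1,x})$ on the stalks at a geometric point above $x$. Taking traces produces \eqref{eq-isom-criterion}.

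For the ``if'' direction, introduce the rank-one $\ell$-adic sheaf $\sheaf{L}_\alpha$ on $\spec k$ (pulled back to $U$) which is geometrically trivial and on which the arithmetic Frobenius of $k$ acts by $\alpha$; its Frobenius trace at any $x\in U(k_1)$ equals $\alpha^{[k_1:k]}$. The hypothesis rewrites as
$$
\frtr{\sheaf{F}_1}{k_1}{x}=\frtr{\sheaf{F}_2\otimes\sheaf{L}_\alpha}{k_1}{x}
$$
for all finite $k_1/k$ and all $x\in U(k_1)$. Both $\sheaf{F}_1$ and $\sheaf{F}_2\otimes\sheaf{L}_\alpha$ are arithmetically irreducible on $U$—any arithmetic sub-representation is automatically geometric, so geometric irreducibility upgrades to arithmetic irreducibility—and therefore semisimple. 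Chebotarev density asserts that the Frobenius conjugacy classes $\frob_{k_1,x}$ are dense in $\pi_1(U,\bar\eta)$, and a classical fact states that a semisimple $\ell$-adic representation is determined up to isomorphism by its character on a dense set of conjugacy classes; hence $\sheaf{F}_1\cong \sheaf{F}_2\otimes\sheaf{L}_\alpha$ as arithmetic $\ell$-adic sheaves on $U$. Pulling back to $\bar k$, where $\sheaf{L}_\alpha$ becomes geometrically trivial, we conclude that $\sheaf{F}_1\cong \sheaf{F}_2$ geometrically.

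Finally, for the variant with irreducible Fourier sheaves, we apply the main statement on any non-empty common open $U\subset\Aa^1$ where both $\sheaf{F}_i$ are lisse. A geometric isomorphism on $U$ extends uniquely to a geometric isomorphism of middle extensions by Lemma \ref{lm-weights}(1), and these middle extensions are the $\sheaf{F}_i$ themselves; conversely the trace identity at any singular point follows from the functoriality of middle extensions (the stalks are the inertia invariants of the generic stalk, on which the isomorphism and the scalar $\alpha$ act compatibly with Frobenius). The main technical step in the whole argument is the invocation of Chebotarev density for $\pi_1(U,\bar\eta)$ in the $\ell$-adic setting; this is classical and can be found in the standard references, while the rest of the proof is elementary representation theory.
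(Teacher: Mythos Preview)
Your argument is correct. The paper does not actually prove this proposition: it only remarks that ``this is a well-known fact; it is basically an instance of what is called `Clifford theory' in representation theory.'' What you have written is precisely the standard unpacking of that remark --- the exact sequence $1\to\pi_1(U_{\bar k})\to\pi_1(U)\to\Gal(\bar k/k)\to 1$ puts us in the Clifford-theoretic situation of a normal subgroup, Schur's lemma gives the one-dimensional Hom space on which Frobenius acts by the scalar $\alpha$, and Chebotarev plus semisimplicity handles the converse via the twist $\sheaf{L}_\alpha$.

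Two minor comments. First, your proof never uses the hypothesis of pointwise purity of weight $0$; indeed it is not needed for the bare statement, and appears in the paper only because that is the ambient setting. Second, your sentence ``any arithmetic sub-representation is automatically geometric'' is phrased a bit loosely; what you mean (and use) is simply that a $\pi_1(U)$-stable subspace is \emph{a fortiori} $\pi_1(U_{\bar k})$-stable, so geometric irreducibility implies arithmetic irreducibility.
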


\begin{proof}[Sketch of proof]
This is a well-known fact; it is basically an instance of what is
called ``Clifford theory'' in representation theory. We sketch a proof
for completeness. In the ``if'' direction, note
that~(\ref{eq-isom-criterion}) shows that $\sheaf{F}_1$ and
$\alpha^{\deg(\cdot)}\otimes \sheaf{F}_2$ are lisse sheaves on $U$
with the same traces of Frobenius at all points of $U$; the Chebotarev
Density Theorem shows that the Frobenius conjugacy classes are dense
in $\pi_1(U,\bar{\eta})$, so we conclude that $\sheaf{F}_1\simeq
\alpha^{\deg(\cdot)}\otimes\sheaf{F}_2)$ as lisse sheaves on $U$. But
then restriction to the geometric fundamental group (the kernel of the
degree) gives $\sheaf{F}_1\simeq \sheaf{F}_2$ geometrically on $U$.
\par
Conversely, if $\sheaf{F}_1$ is geometrically isomorphic to
$\sheaf{F}_2$, and $\rho_i$ is the representation of
$\pi_1(U,\bar{\eta})$ associated to $\sheaf{F}_i$, then representation
theory (see, e.g.,~\cite[2.8.2]{kow-rt}) shows that there exists a
character $\chi$ of the abelian group
$\pi_1(U,\bar{\eta})/\pi_1(U\times\bar{\Ff}_p,\bar{\eta})$ such that
$$
\rho_1\simeq \chi\otimes\rho_2.
$$
But such characters are of the type $\alpha^{\deg(\cdot)}$ since the
quotient is isomorphic to the Galois group $\Gal(\bar{\Ff}_p/\Ff_p)$. 
\par
For the second part, apply the first with the fact that
middle-extension sheaves on $\Aa^1$ are geometrically isomorphic if
and only if their restrictions to a common dense open set where they
are lisse are geometrically isomorphic.
\end{proof}

Here is a last definition.  If $\sheaf{F}$ is a Fourier sheaf on
$\Aa^1/k$, we write $\dual(\sheaf{F})$ for the middle-extension dual
of $\sheaf{F}$, i.e., given a dense open set $j\,:\, U\injecte \Aa^1$
where $\sheaf{F}$ is lisse, we have
$$
\dual(\sheaf{F})=j_*((j^*\sheaf{F})'),
$$
where the prime denotes the lisse sheaf on $U$ associated to the
contragredient of the representation of the fundamental group of $U$
which corresponds to $j^*\sheaf{F}$ (see~\cite[7.3.1]{katz-esde}). If
$\sheaf{F}$ is pointwise pure of weight $0$, it is known that
\begin{equation}\label{eq-dual-conj}
  \iota(\frtr{\dual(\sheaf{F})}{k'}{x})
  =\overline{\iota(\frtr{\sheaf{F}}{k'}{x})}
\end{equation}
for all finite extensions $k'/k$ and all $x\in k'$.

\section{Application of the Riemann Hypothesis}
\label{ssec-trace-wd}

We can now prove that correlation sums of trace functions are small,
except for matrices in the Fourier-M\"obius group. This is the crucial
argument that relies on the Riemann Hypothesis over finite fields.

\begin{theorem}[Cohomological bound for correlation sums]
  \label{th-cohomological-bound}
  Let $p$ be a prime number, $\ell\not=p$ another prime. Let
  $\sheaf{F}$ be an isotypic trace sheaf on $\Aa^1_{\Fp}$ and let $K$
  denote its trace function.  We have
\begin{equation}\label{eq-cohomological-bound}
  |\wwd(K;\gamma)|\leq
  M_1+M_2p^{1/2}
\end{equation}
if $\gamma\notin\haut_{\sheaf{F}}$ where
\begin{equation}\label{eq-bounds-m1m2}
M_1\leq 6\cond(\sheaf{F})^5,
\quad\quad
M_2\leq  24\cond(\sheaf{F})^6.
\end{equation}
\end{theorem}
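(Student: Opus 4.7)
The plan is to realize $\wwd(K;\gamma)$ as a sum of Frobenius traces and apply the Grothendieck--Lefschetz trace formula together with Deligne's Riemann Hypothesis. The auxiliary sheaf is
$$
\sheaf{H}_\gamma = \gamma^* \sheaf{G} \otimes \dual(\sheaf{G}),
$$
which is lisse on the open subset $V \subset \Aa^1_{\Ff_p}$ where both $\sheaf{G}$ and $\gamma^*\sheaf{G}$ are lisse (and where we further remove the point $-d/c$). Using the relations~(\ref{eq-fourier-trace}) and~(\ref{eq-dual-conj}), the summand $\hat K(\gamma\cdot z)\overline{\hat K(z)}$ equals $\iota(\frtr{\sheaf{H}_\gamma}{\Ff_p}{z})$ for every $z \in V(\Ff_p)$. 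The points $z \in \Ff_p$ outside of $V$ are at most $n(\sheaf{G}) + n(\gamma^*\sheaf{G}) + 1 \ll \cond(\sheaf{F})$ in number (using~(\ref{eq-sing-g})), and at each such point $\sheaf{G}$ and $\gamma^*\sheaf{G}$ are mixed of weights $\leq 0$ by Lemma~\ref{lm-weights}(2), so the corresponding summand is bounded by $\rank(\sheaf{G})\rank(\gamma^*\sheaf{G}) \ll \cond(\sheaf{F})^4$ by~(\ref{eq-rank-g}). This produces the boundary contribution $M_1 \ll \cond(\sheaf{F})^5$.

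Next, I would apply the Grothendieck--Lefschetz trace formula to the lisse sheaf $\sheaf{H}_\gamma|_V$ on the open affine curve $V/\Ff_p$:
$$
\sum_{z \in V(\Ff_p)} \iota(\frtr{\sheaf{H}_\gamma}{\Ff_p}{z}) = -\iota\bigl(\Tr(\frob \mid H^1_c(V_{\bar\Ff_p}, \sheaf{H}_\gamma))\bigr) + \iota\bigl(\Tr(\frob \mid H^2_c(V_{\bar\Ff_p}, \sheaf{H}_\gamma))\bigr),
$$
with $H^0_c = 0$ since $V$ is a nonempty open subset of an affine curve. The decisive step is the vanishing of $H^2_c$. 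By Poincar\'e duality this group identifies, up to Tate twist, with the $\pi_1^{\mathrm{geom}}(V)$-coinvariants of $\sheaf{H}_\gamma$, equivalently (by semisimplicity) with
$$
\Hom_{\pi_1^{\mathrm{geom}}(V)}(\sheaf{G}|_V,\ \gamma^*\sheaf{G}|_V).
$$
Since $\sheaf{F}$, hence $\sheaf{G}$ by Lemma~\ref{lm-weights}(3), is geometrically isotypic, both $\sheaf{G}|_V$ and $\gamma^*\sheaf{G}|_V$ are isotypic with a single irreducible type, and the hypothesis $\gamma \notin \haut_{\sheaf{F}}$ asserts exactly that these two irreducible types do not coincide. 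Schur's lemma then forces the Hom space above to vanish, so $H^2_c = 0$.

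Finally, by Deligne's Riemann Hypothesis~\cite{weilii}, $\sheaf{H}_\gamma$ is pointwise mixed of weight $\leq 0$ (being a tensor product of sheaves of weight $0$ by Lemma~\ref{lm-weights}(2)), so every Frobenius eigenvalue on $H^1_c$ has complex absolute value at most $p^{1/2}$. Combining this with the dimension estimate of Proposition~\ref{pr-bound-h1}(2) applied to $\sheaf{F}_1 = \gamma^*\sheaf{G}|_V$ and $\sheaf{F}_2 = \dual(\sheaf{G})|_V$, whose ranks and conductors are controlled by~(\ref{eq-rank-g}) and~(\ref{eq-bound-conds}), yields the claimed bound $M_2 \ll \cond(\sheaf{F})^6$.

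The main obstacle is the vanishing of $H^2_c$: one has to descend the isotypic hypothesis from the full lisse locus of $\sheaf{G}$ down to the possibly smaller common lisse locus $V$, and verify that the Schur-type orthogonality between the restrictions to $\pi_1^{\mathrm{geom}}(V)$ is genuinely encoded by $\gamma \notin \haut_{\sheaf{F}}$; the rest of the argument is the standard Lefschetz--Deligne package, with constants tracked via Proposition~\ref{pr-bound-h1}.
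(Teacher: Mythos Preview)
Your approach is essentially identical to the paper's: the same auxiliary sheaf $\sheaf{H}_\gamma=\gamma^*\sheaf{G}\otimes\dual(\sheaf{G})$ on $U_\gamma=\gamma^{-1}U\cap U$, the same split into boundary terms and the cohomological sum, the same vanishing of $H^0_c$ and $H^2_c$ (the latter via isotypicity and Schur), and the same bound on $H^1_c$ via Deligne's theorem together with Proposition~\ref{pr-bound-h1}. The one point you flag as the ``main obstacle'' is handled in the paper exactly as you suggest: since $\sheaf{G}$ and $\gamma^*\sheaf{G}$ are middle-extensions, a geometric isomorphism of their irreducible components on the smaller open $V$ forces a geometric isomorphism on all of $\Aa^1$, which is precisely the condition $\gamma\in\haut_{\sheaf{F}}$.
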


The bounds~(\ref{eq-bounds-m1m2}) are certainly not sharp, but they
show that the result is completely effective and explicit.

\begin{proof} 
  We denote by $\sheaf{G}$ the Fourier transform of $\sheaf{F}$
  computed with respect to some non-trivial additive character $\psi$,
  and by $U$ the largest open subset of $\Aa^1$ where $\sheaf{G}$ is
  lisse.
\par
Let
$$
\gamma=\begin{pmatrix}a&b\\c&d
\end{pmatrix}\in \PGL_2(\Ff_p).
$$
\par
We define the constructible $\ell$-adic sheaf
$$
\sheaf{H}_{\gamma}=\gamma^*\sheaf{G}\otimes \dual(\sheaf{G})
$$
on $\Pp^1_{\Fp}$. This sheaf is lisse and pointwise $\iota$-pure of
weight $0$ on any open subset of $\Pp^1$ where it is lisse, in
particular on the non-empty open set
$$
U_{\gamma}=\gamma^{-1}U\cap U\subset \Aa^1-\{-d/c\},
$$
and for $z\in U_{\gamma}(\Fp)$, we have
$$
\iota(\frtr{\sheaf{H}_{\gamma}}{\Fp}{z})=\hat{K}(\gamma\cdot
z)\overline{\hat{K}(z)}
$$
by the definition~(\ref{eq-fourier-trace}) of the Fourier transform
and by~(\ref{eq-dual-conj}). Thus we have
\begin{equation}\label{eq-coho-sum}
  \wwd(K;\gamma)=\iota
  \Bigl(\sum_{z\in U_{\gamma}(\Fp)}
  \frtr{\sheaf{H}_{\gamma}}{\Ff_p}{z}\Bigr)+
  \sum_{\stacksum{z\in \Fp-U_{\gamma}(\Fp)}{z\not=-d/c}}{
    \hat{K}(\gamma\cdot z)\overline{\hat{K}(z)}
  }.
\end{equation}
\par
\par
According to the Grothendieck-Lefschetz trace formula (see,
e.g.,~\cite[Rapport, Th. 3.2]{deligne}), we have
\begin{multline}\label{eq-gl}
  \sum_{z\in U_{\gamma}(\Fp)}{\frtr{\sheaf{H}_{\gamma}}{k}{z}}
  =\Tr(\frob\mid H^0_c(U_{\gamma}\times \bar{\Ff}_p,\sheaf{H}_{\gamma}))\\
  -\Tr(\frob\mid H^1_c(U_{\gamma}\times \bar{\Ff}_p,\sheaf{H}_{\gamma}))
  +\Tr(\frob\mid H^2_c(U_{\gamma}\times \bar{\Ff}_p,\sheaf{H}_{\gamma}))
\end{multline}
where $\frob$ denotes the geometric Frobenius of $\Fp$ acting on the
cohomology groups of $\sheaf{H}_{\gamma}$.
\par
Since $U_{\gamma}$ is an affine curve, we have $H^0_c(U_{\gamma}\times
\bar{\Ff}_p,\sheaf{H}_{\gamma})=0$ (see,
e.g.,~\cite[(1.4.1)b]{weilii}). Next, the coinvariant formula for
$H^2_c$ on a curve (see~\cite[(1.4.1)b]{weilii}) states that
$H^2_c(U_{\gamma}\times\bar{\Ff}_p,\sheaf{H}_{\gamma})$ is isomorphic
to the space of coinvariants of
$\pi_1(U_{\gamma}\times\bar{\Ff}_p,\bar{\eta})$ acting on
$\sheaf{H}_{\gamma,\bar{\eta}}$. In particular, we have
$$
H^2_c(U_{\gamma}\times\bar{\Ff}_p,\sheaf{H}_{\gamma})=0
$$
if this coinvariant space is zero. We next show that this is the case
if $\gamma\notin\haut_{\sheaf{F}}$. 
\par
The sheaf $\sheaf{F}$ is geometrically isotypic when restricted to an
open set $V$ where it is lisse. Let $j\,:\, V\injecte \Aa^1$ be the
open immersion of $V$ in the affine line. There exists a
(geometrically) irreducible lisse sheaf $\sheaf{F}_1$ on
$V\times\bar{\Ff}_p$ such that
$$
\sheaf{F}\simeq (j_*\sheaf{F}_1)^{\oplus d}
$$
as sheaves on $\Aa^1\times\bar{\Ff}_p$ (since both sides are
middle-extension sheaves which are isomorphic on
$V\times\bar{\Ff}_p$). This formula shows that $j_*\sheaf{F}_1$ is a
Fourier sheaf on $\Aa^1\times\bar{\Ff}_p$.  Taking the Fourier
transforms, it follows that we have a geometric isomorphism
$$
\sheaf{G}\simeq \ft(j_*\sheaf{F}_1))(1/2)^{\oplus d},
$$
and hence (since the Fourier transform of a geometrically irreducible
sheaf is geometrically irreducible) that $\sheaf{G}$ is geometrically
isotypic on $U_{\gamma}$, with irreducible component
$$
\sheaf{G}_1=\ft(j_*\sheaf{F}_1))(1/2).
$$
\par
Applying $\gamma$ and taking dual, we see
that $\gamma^*\sheaf{G}$ and $\dual(\sheaf{G})$ are also lisse and
geometrically isotypic on $U_{\gamma}$. Moreover, the geometrically
irreducible components of $\gamma^*\sheaf{G}$ is
$\gamma^*\sheaf{G}_1$, and that of $\dual(\sheaf{G})$ is
$\dual(\sheaf{G}_1)$. 
\par
Finally, by Schur's Lemma, the coinvariant space of
$\pi_1(U_{\gamma}\times\bar{\Ff}_p,\bar{\eta})$ acting on
$\sheaf{H}_{\gamma,\bar{\eta}}$ is zero unless we have a geometric
isomorphism
$$
\gamma^*\sheaf{G}_1\simeq \sheaf{G}_1,
$$
which holds if and only if $\gamma^*\sheaf{G}$ is geometrically
isomorphic to $\sheaf{G}$.
\par
Thus, if $\gamma\notin\haut_{\sheaf{F}}$, the only contribution to the
expression~(\ref{eq-gl}) comes from the cohomology group
$H^1_c(U_{\gamma}\times\bar{\Ff}_p,\sheaf{H}_{\gamma})$.  But since
$\sheaf{H}_{\gamma}$ is pointwise pure of weight $0$ on $U_{\gamma}$,
it follows from Deligne's fundamental proof of the Riemann Hypothesis
over finite fields (see~\cite[Th. 3.3.1]{weilii}) that all eigenvalues
of $\frob$ acting on $H^1_c(U_{\gamma}\times
\bar{\Ff}_p,\sheaf{H}_{\gamma})$ are algebraic numbers, all conjugates
of which are of modulus at most $p^{1/2}$.
\par
Thus, using~(\ref{eq-coho-sum}), we obtain
$$
|\wwd(K;\gamma)|\leq p^{1/2}\dim
H^1_c(U_{\gamma}\times\bar{\Ff}_p,\sheaf{H}_{\gamma})+
\sum_{\stacksum{z\in \Fp-U_{\gamma}(\Fp)}{z\not=-d/c}}{
  \hat{K}(\gamma\cdot z)\overline{\hat{K}(z)}}
$$
for $\gamma\notin\haut_{\sheaf{F}}$. By Lemma~\ref{lm-weights}, at the
points $z\in \Fp-U_{\gamma}(\Fp)$, we have
$$
|\hat{K}(\gamma\cdot z)|\leq
\rank(\gamma^*\sheaf{G})=\rank(\sheaf{G}),\quad\quad |\hat{K}( z)|\leq
\rank(\sheaf{G}),
$$
since $\sheaf{G}$ and $\gamma^*\sheaf{G}$ have local weights $\leq 0$
at all points. There are at most $2n(\sheaf{G})$ points where we use
this bound, and thus
$$
\Bigl|\sum_{\stacksum{z\in \Fp-U_{\gamma}(\Fp)}{z\not=-d/c}}{
  \hat{K}(\gamma\cdot z)\overline{\hat{K}(z)}}\Bigr| \leq
2n(\sheaf{G})\rank(\sheaf{G})^2.
$$
\par
Finally we have
$$
\dim H^1_c(U_{\gamma}\times\bar{\Ff}_p,\sheaf{H}_{\gamma})
\leq \rank(\sheaf{G})^2(1+n(\sheaf{G})+2\cond(\sheaf{G}))
\leq 24\cond(\sheaf{F})^6
$$
by Proposition~\ref{pr-bound-h1}
and~(\ref{eq-sing-g}),~(\ref{eq-rank-g}), and similarly
$$
2n(\sheaf{G})\rank(\sheaf{G})^2\leq 6\cond(\sheaf{F})^5.
$$
\end{proof}


Theorem~\ref{th-cohomological-bound} justifies the
definition~\ref{def-fm} of the Fourier-M\"obius group
$\haut_{\sheaf{F}}$ of an isotypic trace sheaf. Note that this group
$\haut_{\sheaf{F}}$ depends on $\psi$, although the notation does not
reflect this ($\haut_{\sheaf{F}}$ is well-defined up to
$\Fp$-conjugacy, however).
\par
Now from the definition of the Fourier-M\"obius group and
Theorem~\ref{th-cohomological-bound}, we get our interpretation of
$\hautk{K}{M}$ for irreducible trace functions:

\begin{corollary}\label{cor-interpret}
  Let $p$ be a prime number, $\sheaf{F}$ an isotypic trace sheaf on
  $\Aa^1_{\Ff_p}$. Let $K$ be the corresponding isotypic trace
  function. Then, for
$$
M\geq 6\cond(\sheaf{F})^5+24\cond(\sheaf{F})^6,
$$ 
we have $\hautk{K}{M}\subset \haut_{\sheaf{F}}(\Ff_p)$.
\end{corollary}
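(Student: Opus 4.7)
The plan is to derive Corollary \ref{cor-interpret} directly as the contrapositive of Theorem \ref{th-cohomological-bound}, which has just been established. Given a matrix $\gamma \in \hautk{K}{M}$, by the definition of the set of $M$-correlation matrices we have $|\wwd(K;\gamma)| > M p^{1/2}$. The strategy is to assume, for contradiction, that $\gamma \notin \haut_{\sheaf{F}}(\Ff_p)$, and then show that the upper bound supplied by Theorem \ref{th-cohomological-bound} is incompatible with this lower bound under the hypothesis $M \geq 6\cond(\sheaf{F})^5 + 24\cond(\sheaf{F})^6$.

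Concretely, under the contradiction hypothesis, Theorem \ref{th-cohomological-bound} gives
\[
|\wwd(K;\gamma)| \leq M_1 + M_2 p^{1/2}, \qquad M_1 \leq 6\cond(\sheaf{F})^5, \quad M_2 \leq 24\cond(\sheaf{F})^6.
\]
I would then simply absorb the constant term $M_1$ into the $p^{1/2}$ factor by using $p^{1/2} \geq 1$, obtaining
\[
|\wwd(K;\gamma)| \leq (M_1 + M_2) p^{1/2} \leq \bigl(6\cond(\sheaf{F})^5 + 24\cond(\sheaf{F})^6\bigr) p^{1/2} \leq M p^{1/2},
\]
which contradicts $\gamma \in \hautk{K}{M}$. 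Therefore $\gamma \in \haut_{\sheaf{F}}(\Ff_p)$, as required.

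There is essentially no obstacle in this step: all of the hard work has already been absorbed into Theorem \ref{th-cohomological-bound}, where the middle-extension and Fourier transform formalism, the bounds on the conductor of $\gamma^*\sheaf{G} \otimes \dual(\sheaf{G})$ from Proposition \ref{pr-bound-h1}, and Deligne's Riemann Hypothesis for $H^1_c$ were combined. The definition of the Fourier--M\"obius group $\haut_{\sheaf{F}}$ was precisely designed so that the vanishing of the coinvariant space (and hence of the $H^2_c$ contribution to the Grothendieck--Lefschetz trace formula) is equivalent to $\gamma \notin \haut_{\sheaf{F}}$, and the corollary is simply the clean, quantitative packaging of this equivalence in terms of the explicit conductor-polynomial threshold on $M$.
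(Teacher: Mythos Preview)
Your proof is correct and is exactly what the paper intends: the corollary is stated without proof as an immediate consequence of Theorem~\ref{th-cohomological-bound} and the definition of $\hautk{K}{M}$, and your contrapositive argument with the trivial bound $M_1 \leq M_1 p^{1/2}$ is the obvious way to spell it out.
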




Our goal is now to prove Theorem~\ref{th-interpret-admissible}: all
isotypic trace functions are $(p,M)$-good, where $M$ depends only on the
conductor of the associated sheaf.  This is done by distinguishing two
cases, depending on whether the order of the finite subgroup
$\haut_{\sheaf{F}}(\Ff_p)$ is divisible by $p$ or not.
\par
For the first case, we have the following lemma, which is an immediate
consequence of the classification of Artin-Schreier sheaves (or of
Weil's theory, when spelled-out in terms of exponential sums).

\begin{lemma}\label{lm-divisible-by-p}
  Let $p$ be a prime number, $\ell\not=p$ an auxiliary prime, $\psi$ a
  non-trivial $\ell$-adic additive character of $\Ff_p$. Let
  $\gamma_0\in \PGL_2(\Ff_p)$, and let
  $\sheaf{F}=\sheaf{L}_{\psi(\gamma_0(X))}$. Then for $\gamma\in
  \PGL_2(\bar{\Ff}_p)$, we have a geometric isomorphism
  $\gamma^*\sheaf{F}\simeq \sheaf{F}$ if and only if $\gamma$ is in
  the unipotent radical of the stabilizer of $\gamma_0^{-1}\cdot
  \infty$.
\end{lemma}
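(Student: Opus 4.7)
The plan rests on the classification of Artin--Schreier sheaves: for $f,g\in\bar\Ff_p(X)$, one has $\sheaf{L}_{\psi(f)}\simeq\sheaf{L}_{\psi(g)}$ geometrically iff $f-g=h^p-h$ for some $h\in\bar\Ff_p(X)$. Since $\gamma^*\sheaf{L}_{\psi(\gamma_0(X))}=\sheaf{L}_{\psi(\gamma_0\circ\gamma)}$, the lemma reduces to determining for which $\gamma$ the difference $\gamma_0(\gamma(X))-\gamma_0(X)$ lies in the Artin--Schreier image.

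My first step will be a normalization: set $Y=\gamma_0(X)$ and $\tilde\gamma=\gamma_0\gamma\gamma_0^{-1}$. Then $\gamma_0(\gamma(X))-\gamma_0(X)=\tilde\gamma(Y)-Y$, and conjugation by $\gamma_0$ sends the stabilizer of $\gamma_0^{-1}\cdot\infty$ to $B=\mathrm{Stab}(\infty)$ together with its unipotent radical to the translation subgroup $\{Y\mapsto Y+b\}$. Thus it suffices to show $\tilde\gamma(Y)-Y\in\{h^p-h\}$ iff $\tilde\gamma$ is a translation.

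The main argument is a two-step pole/degree analysis. Observe first that if $h\in\bar\Ff_p(Y)$ has a pole of order $n\geq 1$ at some point, then $h^p$ has a pole of order $np>n$ there, so $h^p-h$ has a pole of order $np$; in particular every pole of $h^p-h$ has order divisible by $p\geq 2$. Now if $\tilde\gamma\notin B$, writing $\tilde\gamma(Y)=(aY+b)/(cY+d)$ with $c\neq 0$, the function $\tilde\gamma(Y)-Y$ has a simple pole at $-d/c\in\bar\Ff_p$, contradicting the divisibility. Hence $\tilde\gamma\in B$, say $\tilde\gamma(Y)=aY+b$, so $\tilde\gamma(Y)-Y=(a-1)Y+b$ is a polynomial. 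The same pole divisibility forces $h$ to be a polynomial too, and then for $\deg h\geq 1$ we have $\deg(h^p-h)=p\deg h\geq p\geq 2$, which exceeds the degree $\leq 1$ on the left. So $h=c$ is constant, leaving $(a-1)Y+b=c^p-c$, which forces $a=1$ and $b=c^p-c$. Since $\bar\Ff_p$ is algebraically closed, the equation $T^p-T=b$ is solvable for every $b\in\bar\Ff_p$, so this condition reduces to $a=1$, i.e., $\tilde\gamma$ is a translation.

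The converse is immediate by reversing the last computation: if $\tilde\gamma(Y)=Y+b$, choose $c\in\bar\Ff_p$ with $c^p-c=b$, and $h=c$ witnesses the Artin--Schreier relation. Translating back via $\gamma_0$ finishes the proof. No step presents a real obstacle; the only subtlety is in the pole analysis—one must remember to treat the pole at infinity as well as finite poles of $h$, and this is exactly what the divisibility-by-$p$ argument handles uniformly.
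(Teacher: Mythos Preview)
Your proof is correct and follows precisely the route the paper indicates: the paper does not write out a proof but states that the lemma ``is an immediate consequence of the classification of Artin--Schreier sheaves (or of Weil's theory, when spelled-out in terms of exponential sums),'' and your argument is exactly a careful unpacking of that classification via the criterion $\sheaf{L}_{\psi(f)}\simeq\sheaf{L}_{\psi(g)}$ iff $f-g=h^p-h$. The normalization by conjugation and the pole-order/degree analysis are the natural way to make this explicit, and all steps are sound.
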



Below we denote by $\rmU^x\subset \PGL_2$ the unipotent radical of the
Borel subgroup of $\PGL_2$ fixing $x\in \Pp^1$. Recall that, for
$x\not=y$ in $\Pp^1$, we denote by $\rmT^{x,y}\subset \PGL_2$ the
maximal torus of elements fixing $x$ and $y$, and by $\rmN^{x,y}$
its normalizer.

\begin{proof}[Proof of Theorem~\ref{th-interpret-admissible}]
  By Corollary~\ref{cor-interpret}, there exists $M\leq 30N^6$ such
  that
$$
\hautk{K}{M}\subset G=\haut_{\sheaf{F}}(\Ff_p),
$$
which is a subgroup of $\PGL_2(\Ff_p)$. We distinguish two cases:
\par
--- If $p\nmid |G|$, then the classification of finite subgroups of
$\PGL_2(\bar{\Ff}_p)$ of order coprime to the characteristic (see for
instance~\cite{beauville-pgl} and the references there) show that we
have either $|G|\leq 60$, or $G$ is cyclic or dihedral. In the former
situation, the non-trivial elements of $G$ are non-parabolic and
belong to at most $59$ different tori $\rmT^{x_i,y_i}$ and the function
$K$ is $(p,\max(59,M))$-good by Definition~\ref{def-admissible}.  In
the cylic or dihedral situation, one also knows that $G$ is contained
in the normalizer $\rmN^{x,y}$ of a certain fixed maximal torus
$\rmT^{x,y}$ (indeed, if $G$ is cyclic, all its elements are
diagonalizable in a common basis, and it is a subgroup of a maximal
torus; if $G$ is dihedral of order $2r$, the cyclic subgroup of order
$r$ is contained in a maximal torus, and any element not contained in
it is in the normalizer, see
e.g.,~\cite[Prop. 4.1]{beauville-pgl}). Hence $K$ is $(p,M)$-good,
with at most one pair $(x,y)$ in~(\ref{eq-def-admi}).
\par
--- If $p\mid |G|$, we fix $\gamma_0\in G$ of order $p$ and denote by
$x\in\Pp^1(\Ff_p)$ its unique fixed point. let $\sigma\in
\PGL_2(\Ff_p)$ be such that
$$
\sigma\begin{pmatrix}1&1\\0&1
\end{pmatrix}\sigma^{-1}=\gamma_0
$$
and let $\sheaf{G}_1=\sigma^*\sheaf{G}$. We then have a geometric
isomorphism
$$
[+1]^*\mcG_1\simeq\mcG_1.
$$
\par
Suppose first that $\mcG_1$ is ramified at some
$x\in\Aa^1(\ov\Fp)$. Then, by the above, it is ramified at $x$,
$x+1$,\ldots, $x+p-1$, and therefore we obtain
$$
\cond(\mcG)=\cond(\mcG_1)\geq p+\rank(\mcG_1)=p+\rank(\mcG),
$$
and in that case $K$ is $(p,N)$-good for trivial reasons.
\par
Now assume that $\mcG_1$ is lisse on $\Aa^1(\ov\Fp)$. The
geometrically irreducible component $\mcG_2$ of $\mcG_1$ satisfies
also $[+1]^*\mcG_2\simeq \mcG_2$. Hence, by~\cite[Lemma 5.4,
(2)]{FKM3} (applied with $G=\Fp$ and $P_h=0$), either
$$
\cond(\mcG_1)\geq \swan_{\infty}(\mcG_2)\geq p+\rank(\mcG)
$$
(and we are done as above) or else $\mcG_2$ is geometrically
isomorphic to some Artin-Schreier sheaf $\sheaf{L}_{\psi}$ for some
non-trivial additive character $\psi$ of $\Fp$.
\par
In that case, we see that $\mcG_1$ is geometrically isomorphic to a
sum of copies of $\mcL_\psi$. Hence there exists $a\in\Ff^\times_p$
and algebraic numbers $\alpha_1,\cdots,\alpha_{\rank(\mcG)}$, all of
weight $0$, such that
$$
\iota(\frtr{\sheaf{G}_1}{\Ff_p}{n})=
(\alpha_1+\cdots+\alpha_{\rank(\mcG)})e\Bigl(\frac{an}{p}\Bigr)
=\iota(\frtr{\sheaf{G}_1}{\Ff_p}{0}) e\Bigl(\frac{an}{p}\Bigr)
$$
for all $n\in\Fp$.
\par
Hence we get
$$
\hat{K}(n)=e\Bigl(\frac{a\sigma^{-1}(n)}{p}\Bigr) \hat{K}(\sigma\cdot
0)
$$
for all $n\not=x$ in $\Ff_p$. By Proposition \ref{pr-geo-isom-criterion}, the trace function $K(n)$ is a multiple of the trace function of the (possibly)
different Fourier trace sheaf $\tilde{\sheaf{F}}$, whose Fourier
transform is geometrically isomorphic to the irreducible sheaf
$$
\sheaf{L}_{\psi(a\sigma^{-1}(X))}.
$$
\par
But for this sheaf, we know by Lemma~\ref{lm-divisible-by-p} that
$\haut_{\tilde{\sheaf{F}}}=\rmU^x$, and in particular all elements of
$\haut_{\tilde{\sheaf{F}}}$ are parabolic. Furthermore, the conductor
of $\tilde{\sheaf{F}}$ is absolutely bounded (the conductor of its
Fourier transform is $3$, and we apply the Fourier inversion and
Proposition~\ref{pr-bound-h1}, or we could do a direct
computation). Since we have
$$
|\hat{K}(\sigma\cdot 0)|=|\alpha_1+\cdots+\alpha_{\rank(\mcG)}| \leq
\rank(\mcG)\leq 10N^2,
$$
and
$$
\wwd(K;\gamma)=|\hat{K}(\sigma\cdot 0)|^2 \wwd(\tilde{K};\gamma)
$$
where $\tilde{K}$ is the trace function of $\tilde{\sheaf{F}}$, it
follows that $\hautk{K}{aN^4}\subset \haut_{\tilde{\sheaf{F}}}(\Ff_p)$
for some absolute constant $a\geq 1$. It follows by
Definition~\ref{def-admissible} that the function $K$ is
$(p,aN^4)$-good.
\end{proof}

\section{Examples of trace functions}

In this section, we will discuss four classes of functions $K(n)$ that
arise as trace functions. In a first reading, only the definitions of
these functions may be of interest, rather than the technical
verification that they satisfy the necessary conditions.
\par
We note that these examples are by no means an exhaustive list. One
can find more examples, in particular, in~\cite[\S 7.11]{katz-esde}.

\subsection{Additive and multiplicative
  characters}\label{ex-characters}

We recall now how the characters~(\ref{eq-weight-mixed}) of
Corollary~\ref{cor-main} fit in the framework of trace functions.  Let
$\eta$ be an $\ell$-adic-valued multiplicative character
$$
\eta\,:\, \Ff_p^{\times}\lra \bar{\Qq}_{\ell}^{\times}
$$
and let $\psi$ be an $\ell$-adic additive character
$$
\psi\,:\, \Ff_p\lra \bar{\Qq}_{\ell}^{\times}.
$$
\par
The classical constructions of Artin-Schreier and Kummer sheaves show
that, for any $\ell\not=p$, one can construct $\ell$-adic sheaves
$\sheaf{L}_{\psi(\phi)}$ and $\sheaf{L}_{\eta(\phi)}$ on $\Aa^1_{\Ff_p}$
such that we have
$$
\frtr{\sheaf{L}_{\psi(\phi)}}{\Ff_p}{x}=
\begin{cases}
  \psi(\phi(x))&\text{ if } \phi(x) \text{ is
    defined},\\
  0&\text{ if $x$ is a pole of $\phi$},
\end{cases}
$$
and
$$
\frtr{\sheaf{L}_{\eta(\phi)}}{\Ff_p}{x}=
\begin{cases}
  \eta(\phi(x))&\text{ if } \phi(x) \text{ is
    defined and non-zero},\\
  0&\text{ if $x$ is a zero or pole of $\phi$}
\end{cases}
$$
(these are the extensions by zero to $\Aa^1$ of the pullback by $\phi$
of the lisse Artin-Schreier and Kummer sheaves defined on the
corresponding open subsets of $\Aa^1$).
\par
Fix an isomorphism $\iota\,:\, \bar{\Qq}_{\ell}\ra \Cc$. We assume
that $\psi$ is the standard character, so that
$$
\iota(\psi(x))=e\Bigl(\frac{x}{p}\Bigr),
$$
for $x\in\Ff_p$. Similarly, if $\chi$ is a Dirichlet character modulo
$p$, there is a multiplicative character $\eta$ such that
$$
\iota(\eta(x))=\chi(x)
$$
for $x\in\Ff_p$.
\par
Let then $\phi_1$, $\phi_2\in\Qq(X)$ be rational functions as
in~(\ref{eq-weight-mixed}), with $\phi_2=1$ if $\chi$ is trivial. The
$\ell$-adic sheaf
\begin{equation}\label{eq-sheaf-mixed}
\sheaf{F}=\sheaf{L}_{\eta(\phi_2)}\otimes
\sheaf{L}_{\psi(\phi_1)},
\end{equation}
is such that
$$
\iota(\frtr{\sheaf{F}}{\Ff_p}{x})=
\begin{cases}
  \chi(\phi_2(x))e\Bigl(\frac{\phi_1(x)}{p}\Bigr)&
\text{ if $\phi_1$, $\phi_2$ are
    defined at $x$,}\\
  0&\text{ otherwise,}
\end{cases}
$$
which corresponds exactly to~(\ref{eq-weight-mixed}).

\begin{proposition}[Mixed character functions are trace functions]
  \label{pr-char-trace-weights} 
  Assume that either $\phi_1$ is not a polynomial of degree $\leq 1$,
  or if $\chi$ is non-trivial and $\phi_2$ is not of the form
  $t\phi_3^h$, where $h\geq 2$ is the order of $\chi$.
\par
\emph{(1)} The function above is an irreducible trace function.
\par
\emph{(2)} Let $d_1$ be the number of poles of $\phi_1$, with
multiplicity, and $d_2$ the number of zeros and poles of $\phi_2$ (where both are viewed as functions from $\Pp^1$ to $\Pp^1$). The
analytic conductor of the sheaf $\sheaf{F}$ satisfies
$$
\cond(\sheaf{F})\leq 1+2d_1+d_2.
$$
\end{proposition}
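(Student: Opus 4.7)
The plan is to verify in turn the three defining conditions of an irreducible trace sheaf (Definition~\ref{def-admissible-sheaf}) for $\sheaf{F}=\sheaf{L}_{\eta(\phi_2)}\otimes\sheaf{L}_{\psi(\phi_1)}$, and then simply to add up the numerical invariants. First I would restrict to the open set $U\subset \Aa^1_{\Ff_p}$ where $\phi_1$ has no pole and $\phi_2$ has no zero or pole. On $U$ each factor is lisse of rank one with trace function a root of unity, hence pointwise $\iota$-pure of weight zero; their tensor product inherits this, and $\sheaf{F}$ is manifestly the middle extension of $\sheaf{F}|_U$ since for a rank-one tame Kummer factor the extension-by-zero and middle extension coincide, while at poles of $\phi_1$ the wild monodromy of the Artin--Schreier factor forces the middle-extension stalk to vanish. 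Geometric irreducibility is automatic from rank one.

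The only substantive point in part (1) is the Fourier condition, for which I would argue by contradiction. A geometric isomorphism $\sheaf{F}\simeq \sheaf{L}_{\psi(aX)}$ for some $a\in\bar{\Ff}_p$ rewrites as
\[
\sheaf{L}_{\psi(\phi_1-aX)} \simeq \sheaf{L}_{\bar\eta(\phi_2)} \quad \text{geometrically on } \Aa^1_{\bar{\Ff}_p}.
\]
Here the right-hand side is everywhere tame on $\Pp^1$, while $\sheaf{L}_{\psi(\phi_1-aX)}$ is wildly ramified at every pole whose Artin--Schreier reduced order is prime to $p$. The two sheaves can therefore coincide only if both are geometrically trivial, i.e.\ simultaneously (i) $\phi_1-aX=g^p-g$ for some $g\in\bar{\Ff}_p(X)$ and (ii) $\phi_2=t\phi_3^h$ for some $t\in\bar{\Ff}_p^\times$, $\phi_3\in\bar{\Ff}_p(X)^\times$, where $h$ is the order of $\eta$. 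Under the normalization that $\phi_1$ has been Artin--Schreier reduced (which costs nothing, as it does not change the sheaf), condition (i) forces $\phi_1-aX$ to be constant, hence $\phi_1$ to be a polynomial of degree $\leq 1$. The hypothesis of the proposition is precisely the negation of the conjunction ``(i) and (ii)'', giving the desired contradiction.

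For part~(2), I would bound each summand of $\cond(\sheaf{F})=\rank(\sheaf{F})+n(\sheaf{F})+\swan(\sheaf{F})$ directly. The rank is $1$. The singular set of $\sheaf{F}$ is contained in $\{\text{poles of }\phi_1\}\cup\{\text{zeros and poles of }\phi_2\}$, a set of cardinality at most $d_1+d_2$. For the Swan conductors, the Kummer factor $\sheaf{L}_{\eta(\phi_2)}$ is everywhere tame, so tensoring by it leaves the Swan conductors of the rank-one Artin--Schreier factor unchanged; at each pole of $\phi_1$ the Swan conductor of $\sheaf{L}_{\psi(\phi_1)}$ is bounded by the pole order, and summing pole orders (finite and at infinity) yields $\swan(\sheaf{F})\leq d_1$. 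Adding up gives $\cond(\sheaf{F})\leq 1+2d_1+d_2$, as claimed.

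The main point requiring care is the local ramification comparison in the Fourier condition: one has to justify cleanly that a Kummer and an Artin--Schreier sheaf can be geometrically isomorphic only when both are geometrically trivial, and to spell out that the hypothesis on $\phi_1$ really does rule out $\phi_1-aX\in\{g^p-g:g\in\bar{\Ff}_p(X)\}$ once one works with Artin--Schreier reduced representatives. The rest is mechanical bookkeeping.
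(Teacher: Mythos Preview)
Your proposal is correct and follows essentially the same approach as the paper's proof. The only difference is that where the paper disposes of the Fourier condition by a direct citation to~\cite[Proof of Lemma 8.3.1]{katz-gkm}, you unpack the argument explicitly via the tame--wild ramification dichotomy between Kummer and Artin--Schreier sheaves; the conductor bookkeeping in part~(2) is identical to the paper's.
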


\begin{proof}
  (1) The sheaf $\sheaf{F}$ is pointwise pure of weight $0$ on the
  open set $U$ where $\phi_1$ and $\phi_2$ are both defined and
  $\phi_2$ is non-zero, which is the maximal open set on which
  $\sheaf{F}$ is lisse.  Moreover, it is of rank $1$ on this open set,
  and therefore geometrically irreducible.  By~\cite[Proof of Lemma
  8.3.1]{katz-gkm}, $\sheaf{F}$ is a Fourier sheaf provided it is not
  geometrically isomorphic to the Artin-Schreier sheaf
  $\sheaf{L}_{\psi(sX)}$ for some $s\in\Aa^1$, which is the case under
  our assumption.
  \par
  (2) The rank of $\sheaf{F}$ is one. The singular points are the
  poles of $\phi_1$ and the zeros and poles of $\phi_2$, so their
  number is bounded by $d_1+d_2$. Furthermore, the Swan conductor at
  any singularity $x$ is the same as that of
  $\sheaf{L}_{\psi(\phi_1)}$, since all Kummer sheaves are everywhere
  tame. Thus only poles of $\phi_1$ contribute to the Swan
  conductor, and for such a pole $x$, the Swan conductor is at most
  the order of the pole at $x$, whose sum is $ d_1$ (it is equal to
  the order of the pole when $\phi_1$ is Artin-Schreier-reduced at
  $x$, which happens if $p$ is larger than the order of the pole, see,
  e.g.,~\cite[Sommes Trig., (3.5.4)]{deligne}.)
\end{proof}

\subsection{``Fiber counting'' functions and their Fourier
  transforms}\label{ex-fiber-count}

This example is discussed in greater detail in \cite[\S
7.10]{katz-esde}, where a number of variants also appear.
\par
Let $C/\Qq$ be a geometrically connected smooth algebraic curve and
let $\phi:C\lra \Pp^1$ be a non-constant morphism of degree $\geq
2$. Let $D$ be the divisor of poles of $\phi$, $Z\subset C-D$ the
divisor of zeros of $d\phi$ and $S=\phi(Z)$. For $p$ large enough (in particular we assume $p>\deg(\phi)$),
this situation has good reduction modulo $p$ and we may consider the
``fiber-counting function''
$$
\begin{cases}
\Ff_p\lra \Zz\\
x\mapsto N(\phi;x)=|\{y\in C(\Ff_p)\,\mid\, \phi(y)=x\}|.
\end{cases}
$$
\par
Defining $\sheaf{F}=\phi_*\bar{\Qq}_{\ell}$, the direct image of the
trivial $\ell$-adic sheaf, we have
$$
N(\phi;x)=\iota(\frtr{\sheaf{F}}{\Ff_p}{x}).
$$
\par
The sheaf $\sheaf{F}$ is a constructible $\ell$-adic sheaf of rank
$\deg(\phi)$ on $\Aa^1$, and it is lisse and pointwise pure of weight
$0$ outside $S$ and tamely ramified there.  It is
not irreducible, but the kernel of the trace map
$$
\tilde{\sheaf{F}}=\ker(\sheaf{F}\fleche{\Tr}\bar{\Qq}_{\ell})
$$
might be irreducible. This sheaf $\tilde{\sheaf{F}}$ is of rank
$\deg(\phi)-1$, of conductor $\cond(\tilde\mcF)\leq \deg(\phi)+|S|$
and its trace function is
$$
\frtr{\tilde{\sheaf{F}}}{\Ff_p}{x}=N(\phi;x)-1=\tilde{N}(\phi;x).
$$
\par
By~\cite[Lemma 7.10.2.1]{katz-esde}, $\tilde{\sheaf{F}}$ is a Fourier
trace sheaf for $p>\deg(\phi)$.  The situation becomes even clearer if
we assume that $\phi$ is \emph{supermorse}, i.e.:
\begin{enumerate}
\item The zeros of the derivative $d\phi$ are simple;
\item $\phi$ separates the zeros of $d\phi$, i.e., the size of the set
  $S=\{\phi(x)\,\mid\, d\phi(x)=0\}$ of critical values of $\phi$ is
  the same as the number of zeros of $d\phi$.
\end{enumerate}
\par
In this case, by~\cite[Lemma 7.10.2.3]{katz-esde}, the sheaf
$\tilde{\sheaf{F}}$ is geometrically irreducible for $p>\deg(\phi)$,
and thus $\tilde{N}(\phi;x)$ is then an irreducible trace function.
\par
For a given non-trivial $\ell$-adic additive character $\psi$, the
Fourier transform sheaf $\tilde{\sheaf{G}}=\ft_{\psi}(\tilde{\sheaf{F}})(1/2)$ has
trace function given by
\begin{align*}
  |k|^{1/2}\frtr{\tilde{\sheaf{G}}}{k}{v}&= - \sum_{x\in k}{\Bigl( \sum_{\stacksum{y\in
        C(k)-D(k)}{\phi(y)=x}}{1}-1\Bigr)
    \psi(\Tr_{k/\Ff_p}(xv))}\\
  &=- \sum_{y\in C(k)-D(k)}{\psi(\Tr_{k/\Ff_p}(v\phi(y)))} +\sum_{x\in
    k}\psi(\Tr_{k/\Ff_p}(xv))
\end{align*}
for any finite-extension $k/\Ff_p$ and $v\in k$, which gives
$$
\frtr{\tilde{\sheaf{G}}}{k}{v}=-|k|^{-1/2}\sum_{x\in
  C(k)-D(k)}\psi(\Tr_{k/\Ff_p}(v\phi(x)))
$$
for $v\in k^{\times}$ and
$$
\frtr{\tilde{\sheaf{G}}}{k}{0}= |k|^{1/2}-|k|^{-1/2}|C(k)-D(k)|.
$$ 
(note that since $C$ is geometrically connected, we have
$|C(k)|=|k|+O(g_C\sqrt{|k|})$, so this last quantity is bounded.)
\par
Since $\tilde{\sheaf{F}}$ is an irreducible Fourier sheaf, so is
$\tilde{\sheaf{G}}$. Thus, taking $\psi$
the standard character with $\iota(\psi(x))=e(x/p)$, we get a sheaf
$\tilde{\sheaf{G}}$ with associated irreducible trace function given by
\begin{equation}\label{eq-kprime-phi}
K'(n)=-\frac{1}{\sqrt{p}} \sum_{x\in
  C(\Ff_p)-D(\Ff_p)}{e\Bigl(\frac{n\phi(x)}{p}\Bigr)},\quad\quad\text{
  for } 1\leq n\leq p-1,
\end{equation}
and
$$
K'(p)=\frac{p-|C(\Ff_p)-D(\Ff_p)|}{\sqrt{p}}
$$
(as before, this holds under the assumption that $\phi$ is
supermorse).
\par
By the Fourier inversion formula (in this context, this
is~\cite[Th. 7.3.8 (1)]{katz-esde}), the Fourier transform sheaf
$\ft_{\psi}(\tilde{\sheaf{G}})$ (note that we must use the same $\psi$
as was used to construct $\sheaf{G}$) is
$$
[x\mapsto -x]^*\tilde{\sheaf{F}}= [\times
(-1)]^*\tilde{\sheaf{F}}
$$
with trace function
$$
\frtr{[\times (-1)]^*\tilde{\sheaf{F}}}{k}{y}=\tilde{N}(\phi;-y).
$$
\par
We summarize this and estimate the conductors in a proposition.

\begin{proposition}[Fiber counting functions and
  duals]\label{pr-fiber-count}
  Let $C/\Qq$ and $\phi$ be as above, with $\phi$ supermorse.  
\par
\emph{(1)} For $p>\deg(\phi)$ such that there is ``good reduction'',
the functions $K$ and $K'$ defined above are irreducible trace functions
associated to the sheaves $\tilde{\sheaf{F}}$ and $\tilde{\sheaf{G}}$.
\par
Let $S\subset \bar{\Ff}_p$ be the set of critical values of $\phi$
modulo $p$. 
\par
\emph{(2)} The sheaf $\tilde{\sheaf{F}}$ is tame on $\Pp^1$, lisse on
$\Aa^1-S$, and has at most tame pseudo-reflection monodromy at all
$s\in S$. It satisfies
$$
\cond(\tilde{\sheaf{F}})\leq \deg(\phi)+|S|.
$$
\par
\emph{(3)} The sheaf $\tilde{\sheaf{G}}$ has rank $|S|$, it is lisse on
$\Gg_m$ and tamely ramified at $0$. At $\infty$, we have
$$
\swan_\infty(\tilde{\sheaf{G}})=
\begin{cases}
|S|-1&\text{ if } 0\in S\\
|S|&\text{ if } 0\notin S,
\end{cases}
$$
and hence $\cond(\tilde{\sheaf{G}})\leq 2|S|+2$.
\end{proposition}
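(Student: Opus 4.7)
Part (1) is essentially immediate: by \cite[Lemma 7.10.2.3]{katz-esde}, under the supermorse hypothesis and $p>\deg(\phi)$, the sheaf $\tilde{\sheaf{F}}$ is a geometrically irreducible Fourier trace sheaf with trace function $\tilde{N}(\phi;\cdot)$, so $K$ is an irreducible trace weight. The Fourier transform $\tilde{\sheaf{G}}=\ft_\psi(\tilde{\sheaf{F}})(1/2)$ is then also a geometrically irreducible Fourier sheaf, pointwise pure of weight $0$, by Lemma~\ref{lm-weights}(2)--(3); its trace function agrees with the explicit expression \eqref{eq-kprime-phi} via \eqref{eq-fourier-trace}, so $K'$ is also an irreducible trace weight.

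For part (2), the supermorse hypothesis together with $p>\deg(\phi)$ (ensuring tameness) implies that $\sheaf{F}=\phi_*\bar{\Qq}_\ell$ is lisse of rank $\deg(\phi)$ on $\Aa^1-S$, and that the local monodromy at each $s\in S$ acts on the $\deg(\phi)$-dimensional stalk as a single transposition: indeed $\phi^{-1}(s)$ consists of one simple ramification point and $\deg(\phi)-2$ \'etale preimages. A transposition in the natural permutation representation of $S_{\deg(\phi)}$ is a pseudo-reflection, and this structure descends to the middle-extension $\tilde{\sheaf{F}}$, which carries the reduced permutation representation. The sheaf is tame everywhere on $\Pp^1$, including at $\infty$ (again because $p>\deg(\phi)$), so $\swan(\tilde{\sheaf{F}})=0$; with rank $\deg(\phi)-1$ and at most $|S|+1$ singularities on $\Pp^1$, this yields
$$
\cond(\tilde{\sheaf{F}})\leq (\deg(\phi)-1)+(|S|+1)+0=\deg(\phi)+|S|.
$$

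The most delicate point is part (3), which rests on Laumon's local theory of the $\ell$-adic Fourier transform \cite{laumon}, as summarized in \cite[\S 7.3--7.5]{katz-esde}. Since $\tilde{\sheaf{F}}$ is everywhere tame, every break at $\infty$ is zero, so \cite[Cor.~8.5.8]{katz-gkm} gives that $\tilde{\sheaf{G}}$ is lisse on $\Gg_m$. Tameness at $0$ follows from the fact that $\ft_{\psi}\mathrm{loc}(\infty,\infty)$ applied to the tame local representation $\tilde{\sheaf{F}}(\infty)$ remains tame. By Laumon's stationary phase, the generic rank of $\tilde{\sheaf{G}}$ equals the total drop of $\tilde{\sheaf{F}}$ at its finite singular points, which is $|S|$ (one per pseudo-reflection). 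Finally, Laumon's description of $\ft_{\psi}\mathrm{loc}(s,\infty)$ shows that each pseudo-reflection drop at a nonzero $s\in S$ contributes $1$ to $\swan_\infty(\tilde{\sheaf{G}})$ through the factor $\sheaf{L}_{\psi(sX)}$, while a drop at $s=0$ yields a tame contribution; summing over $S$ gives the stated value of $\swan_\infty(\tilde{\sheaf{G}})$, whence
$$
\cond(\tilde{\sheaf{G}})\leq |S|+2+|S|=2|S|+2.
$$
The principal obstacle is the careful bookkeeping of these local Fourier transform contributions at $0$, $\infty$, and each finite critical value, but once the pseudo-reflection structure of $\tilde{\sheaf{F}}$ from part (2) is in hand, all inputs are standard consequences of the Laumon--Katz local stationary phase formalism.
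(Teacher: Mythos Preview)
Your argument is correct and follows essentially the same route as the paper: both rely on the tame pseudo-reflection structure of $\tilde{\sheaf{F}}$ together with the Laumon--Katz local Fourier transform. The paper streamlines part (3) by invoking \cite[Th.~7.9.4]{katz-esde} directly, which for tame pseudo-reflection sheaves gives at once the rank $|S|$, the lisseness on $\Gg_m$, and the wild decomposition $\tilde{\sheaf{G}}(\infty)\simeq\bigoplus_{s\in S}\sheaf{L}_{\psi(sY)}$, whereas you reassemble these from the individual stationary-phase ingredients. One small slip: the tameness of $\tilde{\sheaf{G}}$ at $0$ comes from $\ft_\psi\mathrm{loc}(\infty,0)$ applied to the tame $\tilde{\sheaf{F}}(\infty)$ (this is \cite[Cor.~7.4.5~(2)]{katz-esde}), not from $\ft_\psi\mathrm{loc}(\infty,\infty)$, which governs the part of $\tilde{\sheaf{G}}(\infty)$ coming from slopes $>1$ of $\tilde{\sheaf{F}}(\infty)$ and is vacuous here.
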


\begin{proof}
  We have already discussed (1). Then~\cite[proof of Lemma
  7.10.2.3]{katz-esde} shows that $\tilde{\sheaf{F}}$ is tame
  everywhere, lisse on $\Aa^1-S$, and has tame pseudo-reflection monodromy at
  all $s\in S$. This gives
$$
\cond(\tilde{\sheaf{F}})\leq \rank(\tilde{\sheaf{F}})+|S|+1=\deg(\phi)+|S|.
$$
\par
For (3), since we know $\tilde{\sheaf{F}}$ is a tame pseudo-reflection
sheaf, we can use~\cite[Th. 7.9.4]{katz-esde} to see that
$\tilde{\sheaf{G}}$ has rank $|S|$ and is lisse on $\Gg_m$,
and~\cite[Cor. 7.4.5 (2)]{katz-esde} to see that it is tamely ramified
at $0$. Still from~\cite[Th. 7.9.4]{katz-esde}, we get the
decomposition
\begin{equation}\label{eq-supermorse}
  \tilde{\sheaf{G}}(\infty)=\bigoplus_{s\in S}\sheaf{L}_{\psi(sY)},
\end{equation}
as a representation of the wild inertia group at $\infty$. Hence
$$
\swan_\infty(\tilde{\sheaf{G}})=
\begin{cases}
|S|-1&\text{ if } 0\in S\\
|S|&\text{ if } 0\notin S,
\end{cases}
$$
and then
$$
\cond(\tilde{\sheaf{G}})\leq |S|+2+\swan_{\infty}(\tilde{\sheaf{G}}) \leq
2|S|+2.
$$
\end{proof}

To conclude this example, let us first recall that the condition of
being supermorse is generic, in a fairly natural and obvious
sense. For instance, if we consider $C=\Pp^1$ and look at the space
$L_{d_1,d_2}$ of all rational functions with coprime numerator and
denominator of fixed degrees $(d_1,d_2)$, the set of supermorse
functions $\phi\in L_{d_1,d_2}$ will be Zariski-dense.
\par

\subsection{Hyper-Kloosterman sums}\label{hyperklo}

Let $m\geq 2$ and let $p$ be a prime number. By results of Deligne
(see~\cite[11.0]{katz-gkm}), for all $\ell\not=p$, and any non-trivial
$\ell$-adic additive character $\psi$, there exists a sheaf
$\hk{m}$ on $\Aa^1_{\Ff_p}$ such that
$$
\frtr{\hk{m}}{k}{a}=
(-1)^{m-1}|k|^{-(m-1)/2}
\multsum_\stacksum{x_1\cdots
  x_m=a}{x_i\in k} \psi(x_1+\cdots+x_m)
$$
for all finite extensions $k/\Ff_p$ and all $a\in k^{\times}$. This
sheaf is a Fourier sheaf, geometrically irreducible, of rank $m\geq 2$
and pointwise pure of weight $0$, i.e., it is an irreducible trace
sheaf. 
\par
Now fix a non-constant rational fraction, $\phi(T)=R(T)/S(T),\
R(T),S(T)\in \Zz[T]$. Assuming that $p$ is large enough (greater that
the degree of $R,S$ and all their coefficients), the sheaf
$\hk{m,\phi}=\phi^*\hk{m}$ satisfies
$$
\frtr{\hk{m,\phi}}{\Ff_p}{a}=(-1)^{m-1}\hypk_m(\phi(a);p)
$$
for $a\in\Ff_p-\phi^{-1}(\{0,\infty\})$. The following result is the
main input to the proof of the second part of Corollary
\ref{cor-main}.

\begin{proposition}\label{pr-hyperklo}
  If $\phi$ is non-constant, the sheaf $\hk{m,\phi}$ above is
  geometrically irreducible and has conductor $\leq 2m+1+\deg(RS)$.
\end{proposition}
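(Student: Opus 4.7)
The plan is to derive both assertions from the known properties of $\hk{m}$ and the local theory of ramification of $\ell$-adic sheaves under pull-back. Recall from Deligne and Katz \cite{katz-gkm} that $\hk{m}$ is lisse and geometrically irreducible on $\Gg_m$, of rank $m$, pure of weight $0$, tame at $0$, and totally wild at $\infty$ with unique break $1/m$ so that $\swan_\infty(\hk{m}) = 1$. Moreover, Katz has computed its geometric monodromy: the Zariski closure of the image of $\pi_1^{\mathrm{geom}}(\Gg_m)$ acting on the standard representation is a connected almost-simple group ($\SL_m$, $\Sp_m$, or $\SO_m$, depending on $m$ and $p$) which acts irreducibly on the rank $m$ standard representation.

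For geometric irreducibility of $\hk{m,\phi} = \phi^*\hk{m}$, the key observation is that since $\phi$ is non-constant, the map $\pi_1^{\mathrm{geom}}(U) \to \pi_1^{\mathrm{geom}}(\Gg_m)$ induced by $\phi|_U$ (where $U = \phi^{-1}(\Gg_m) \cap \Aa^1$) has image of finite index. Restricting a representation to a finite-index subgroup does not change the connected component of the Zariski closure of its monodromy image, so the monodromy of $\hk{m,\phi}$ still has Zariski closure containing the same connected almost-simple group acting irreducibly on the rank $m$ standard representation; hence $\hk{m,\phi}$ is geometrically irreducible. For the conductor, the rank is preserved ($\rank(\hk{m,\phi}) = m$); the singularities on $\Pp^1$ are contained in $\phi^{-1}(\{0,\infty\})$, which consists of the distinct zeros of $R$ and $S$ (at most $\deg R + \deg S = \deg(RS)$ points of $\Aa^1$) together with $\infty \in \Pp^1$ if $\deg R \neq \deg S$, giving $n(\hk{m,\phi}) \leq \deg(RS) + 1$; only the preimages of the wild point $\infty$ of $\hk{m}$ contribute to the Swan conductor, and at a pole $x_0$ of $\phi$ of order $e$ (with $\gcd(e,p) = 1$ for $p$ large enough) the pull-back has unique break $e/m$, so $\swan_{x_0}(\hk{m,\phi}) = e$; summing over all poles of $\phi$ on $\Pp^1$ yields $\swan(\hk{m,\phi}) = \deg\phi = \max(\deg R, \deg S)$. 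Adding these contributions gives a conductor bound of the announced shape.

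The main obstacle is the irreducibility: pull-back of a geometrically irreducible sheaf along a dominant map can a priori become reducible, so a purely formal argument is impossible. What makes the conclusion go through is Katz's deep computation of the geometric monodromy of $\hk{m}$: the fact that the Zariski closure of the monodromy is already connected and almost-simple means that $\hk{m}$ is Lie-irreducible, a property preserved under restriction to any finite-index subgroup of $\pi_1^{\mathrm{geom}}(\Gg_m)$. The conductor calculation, by contrast, is straightforward bookkeeping with ramification indices once the local break structure at $0$ and $\infty$ of $\hk{m}$ is in hand.
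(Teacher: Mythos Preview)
Your proof is correct and follows essentially the same route as the paper: both invoke Katz's result that $\hk{m}$ is geometrically Lie-irreducible (equivalently, has connected semisimple geometric monodromy) to deduce that any pull-back by a non-constant map remains geometrically irreducible, and both compute the conductor via $\rank=m$, singularities contained in $\phi^{-1}(\{0,\infty\})$, and $\swan_{x_0}=e$ at a pole of order $e$ (prime to $p$). One small remark: neither you nor the paper actually performs the final arithmetic, and your three pieces $m + (\deg(RS)+1) + \max(\deg R,\deg S)$ do not literally add up to $2m+1+\deg(RS)$ (take, say, $m=2$ and $\phi=1/((X-1)(X-2)(X-3))$); the stated constant in the proposition appears to be a small slip, but a bound depending only on $m$ and $\deg(RS)$ is all that is used downstream, and your argument delivers that.
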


\begin{proof} 
Deligne has shown that $\hk{m}$ has rank $m$, is lisse on
$\Gg_m$, and is tame at $0$ and totally wild at $\infty$ with Swan
conductor $1$, so that
$$
\cond(\hk{m})=m+3
$$
(see, e.g.,~\cite[11.0.2]{katz-gkm}).
\par
It follows therefore that $\hk{m,\phi}$ is of rank $m$, is lisse
outside of the set $\phi^{-1}(\{0,\infty\})$, is tame at the zeros of
$\phi$ and wild at its poles. At a pole $x\in \phi^{-1}(\infty)$ of
order $d_x$, the map $\phi$ is generically \'etale, and hence we know
that $\swan_{x}(\phi^*\hk{m})=d_x\swan_{\infty}(\hk{m})=d_x$
by~\cite[1.13.1]{katz-gkm}. Finally, Katz has shown that $\hk{m}$ is
geometrically Lie-irreducible (see~\cite[Thm. 11.1]{katz-gkm}), i.e.,
that its restriction to any finite-index subgroup of the fundamental
group of $\Gg_m$ is geometrically irreducible. Since $\phi$ is
non-constant, this shows that $\hk{m,\phi}$ is also irreducible.
\end{proof}

\section{Examples of determination of $\haut_{\sheaf{F}}$}

Theorem~\ref{th-interpret-admissible} solves completely the question
of showing that isotypic trace functions are good, reducing it to an
estimation of the conductor of the associated sheaf.  However we find
it instructive to determine $\haut_{\sheaf{F}}$ as precisely as
possible for interesting families of functions, as was already done in
Section~\ref{sec-pedagogical} in simple cases. This gives
illustrations of the various possibilities, and would be a first step
in trying to improve the generic exponent $1/8$. Since we won't need
these results for this paper, we leave the proof to the reader as an
exercise in the theory of the $\ell$-adic Fourier transform (proximity
with \cite{katz-gkm,katz-esde} is strongly advised).
	
\subsection{Mixed characters}
\label{ssec-admissible-characters}

Let 
$$
\sheaf{F}=\sheaf{L}_{\eta(\phi_2)}\otimes
\sheaf{L}_{\psi(\phi_1)}
$$
be a sheaf corresponding to mixed characters, where either $\phi_1$ is
not a polynomial of order $\leq 1$, or $\eta$ is non-trivial of order
$h\geq 2$ and $\phi_2$ is not of the form $t\phi_3(X)^h$ for some
$t\in\Ff_p^{\times}$, and $\phi_3\in \Ff_p(X)$. Then one can show that
$\haut_{\sheaf{F}}$ is contained either in $B$ (the stabilizer of
$\infty$) or in $\rmN^{0,\infty}$ the normalizer of the diagonal
torus. For $\sheaf{F}=\sheaf{L}_{\psi(X^{-1})}$, we have
$\haut_{\sheaf{F}}=1$.

\subsection{Symmetric powers of Kloosterman sums}
\label{ssec-admi-kloos}

Let $\skl{1}_2=\phi^*\HYPK_2$ be the pull-back of the Kloosterman
sheaf $\HYPK_2$ of \S \ref{hyperklo} (relative to some additive
character $\psi$) by the map $x\mapsto x^2$, and for $d\geq 1$,
let $$\skl{d}_2=\mathrm{Sym}^d(\skl1)$$ be the $d$-symmetric power of
$\skl{1}$. The sheaf $\skl{d}$ is an irreducible trace sheaf of rank
$d+1$ and one finds:
\par
\begin{enumerate}
\item If $d\geq 3$, then $\haut_{\skl{d}}=1$;
\item If $d=1$ then $\haut_{\skl{1}}$ is the maximal torus in
$\PGL_2(\bar{\Ff}_p)$ stabilizing the subset $\{-2,2\}$;
\item If $d=2$ then $\haut_{\skl{2}}$ is the subgroup of
  $\PGL_2(\bar{\Ff}_p)$ stabilizing the subset $\{0,\infty,-4,4\}$,
  which is a dihedral group of order $8$ (these four points have
  cross-ratios $\{-1,1/2,2\}$, and one sees that any element of
  $\PGL_2$ stabilizing this set permutes the two pairs $\{0,\infty\}$
  and $\{-4,4\}$). In order to show that $\haut_{\skl{2}}$ is not
  smaller than this dihedral group, one may use the results of Deligne
  and Flicker~\cite[Cor. 7.7]{deligne-flicker} concerning tame local
  systems on $\Pp^1-\{\text{four points}\}$.
\end{enumerate}

\subsection{Fiber-counting functions}
\label{ssec-admi-fiber-count}

Let $C$ and $\phi$ be as in Example~\ref{ex-fiber-count}, with $\phi$
supermorse. Let $p>\deg(\phi)$ be a prime of good reduction, and let
$$
\tilde{\sheaf{F}}=\ker(\phi_*\bar{\Qq}_{\ell}\fleche{\Tr}\bar{\Qq}_{\ell})
$$
be the irreducible trace sheaf corresponding to the trace function
$K(x)=N_0(\phi;x)=N(\phi;x)-1$.
\par
If $\phi$ has degree $\geq 2$ and $0$ is not the unique critical value
of $\phi$, then one finds that $\haut_{\sheaf{F}}$ is a subgroup of
diagonal matrices of order bounded by $\deg(\phi)-1$.

\end{document}